\documentclass[12pt]{amsart}
\usepackage[utf8]{inputenc}
\usepackage{enumerate}
\usepackage{amsmath}
\usepackage{melanie}
\usepackage{brandon}
\usepackage{graphicx}
\usepackage{amsfonts}
\usepackage{amsmath}
\usepackage{amssymb}
\usepackage{fancyhdr}
\usepackage{indentfirst}
\usepackage{tikz}
\usepackage{tikz-cd}
\usepackage{capt-of}
\usepackage{booktabs}
\usepackage{verbatim}
\usepackage{color}
\usepackage{amsthm}
\usepackage{subfigure}
\usepackage{multirow}
\usepackage{longtable}
\usepackage{xcolor}
\usepackage{hyperref}

\newtheorem{theorem}{Theorem}[section]
\newtheorem{lemma}[theorem]{Lemma}

\newtheorem{corollary}[theorem]{Corollary}
\newtheorem{proposition}[theorem]{Proposition}

\newtheorem{conjecture}{Conjecture}
\theoremstyle{plain}
\newtheorem{data}[theorem]{Data Analysis}
\theoremstyle{remark} \newtheorem{remark}[theorem]{Remark}

\usepackage[margin=1in]{geometry}

\numberwithin{equation}{section}

\title{Inductive Methods for Counting Number Fields}
\author[B. Alberts]{Brandon Alberts}
\author[R.J. Lemke Oliver]{Robert J. Lemke Oliver}
\author[J. Wang]{Jiuya Wang}
\author[M.M. Wood]{Melanie Matchett Wood}

\begin{document}

\begin{abstract}
	We give a new method for counting extensions of a number field asymptotically by discriminant, which we employ to prove many new cases of Malle's Conjecture and counterexamples to Malle's Conjecture.  
	We consider families of extensions whose Galois closure is a fixed permutation group $G$.
	Our method relies on having asymptotic counts for $T$-extensions for some normal subgroup $T$ of $G$, uniform bounds for the number of such $T$-extensions, and possibly weak bounds on the asymptotic number of $G/T$-extensions.  However, we do not require that most $T$-extensions of a $G/T$-extension are $G$-extensions.  Our new results use $T$ either abelian or $S_3^m$, though our framework is general.
\end{abstract}

\maketitle

\section{Introduction}

	Let $k$ be a number field, $\bar{k}$ a fixed choice of its algebraic closure, and $G$ a permutation group of degree $n$ (i.e.\ transitive subgroup of the symmetric group $S_n$). We call a field extension $L/k$ a \emph{$G$-extension} if
	the Galois closure $\widetilde{L}$ of $L$ over $k$ has Galois group $\Gal(\widetilde{L}/k)$ which, acting on the embeddings $L\hookrightarrow \widetilde{L}$,
	is isomorphic to $G$ as a permutation group. Define a set of field extensions
	\[
	\mathcal{F}_{n,k}(G;X) = \#\{L/k :L\sub \bar{k},\ [L:k]=n,\  \Gal(\tilde{L}/k) \cong G,\ |\disc(L/k)|\le X\},
	\]
	where $ \cong $ denotes isomorphism as permutation groups and $|\cdot|$ denotes the norm down to $\Q$. The subscript $n$ is redundant since $G$ is a permutation group, but we include it because the degree is often a convenient reminder of which permutation representation we are considering for a particular abstract group.

	Number fields are ``counted'' by studying the asymptotic growth of $\#\mathcal{F}_{n,k}(G;X)$ as $X\to \infty$. Malle \cite{malle2002,malle2004} was the first to make general predictions for this rate of growth, leading to the following conjecture.

	\begin{conjecture}[Number Field Counting Conjecture]\label{conj:number_field_counting}
		Let $k$ be a number field and $G$ a  transitive permutation group of degree $n$. Then there exist positive constants $a,b,c > 0$ depending on $k$ and $G$ such that
		\[
		\#\mathcal{F}_{n,k}(G;X)
			\sim c X^{1/a} (\log X)^{b-1}
		\]
		as $X\to \infty$.
	\end{conjecture}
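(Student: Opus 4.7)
Since the conjecture is open (and indeed admits counterexamples in its original formulation), a natural line of attack — and the one underlying the paper's framework — is induction on $|G|$ via a chosen nontrivial normal subgroup $T \trianglelefteq G$. Every $G$-extension $L/k$ has Galois closure $\widetilde{L}$ whose fixed field $F := \widetilde{L}^T$ is a $(G/T)$-extension of $k$, and $\widetilde{L}$ arises from a $T$-extension $M/F$ whose Galois closure over $k$ realizes the full group $G$ (not a proper preimage of $G/T$ in $G$). This yields the master identity
\[
\#\mathcal{F}_{n,k}(G;X) \;=\; \sum_{F} \#\{M/F \text{ as above with } |\disc(L/k)| \le X\},
\]
where $F$ ranges over $(G/T)$-extensions of $k$ inside $\bar{k}$.

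The plan is then: (i) use the tower formula, together with the conductor-discriminant formula applied to the abelianization of $T$ and its twists by $G/T$, to translate $|\disc(L/k)| \le X$ into a bound $|\disc(M/F)| \le Y(F,X)$; (ii) invoke, as inductive hypothesis, an asymptotic of the form $\#\mathcal{F}_{|T|,F}(T;Y) \sim c_T(F)\, Y^{1/a(T)}(\log Y)^{b(T)-1}$ together with uniformity in $F$; (iii) substitute into the master identity and sum over $F$, where only a weak upper bound on $\#\mathcal{F}(G/T;Z)$ is needed, provided the main contribution comes from the inner $T$-count. The local behaviour at each ramified prime of $k$ would then assemble into the predicted leading constant $c$, while the global invariants $a(G)$ and $b(G)$ should emerge from optimizing the product of the $T$- and $(G/T)$-contributions under the discriminant constraint.

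The hardest steps are two intertwined ones. First, distinguishing genuine $G$-extensions among the $T$-extensions of $F$: if many such $M/F$ fail to have Galois closure $G$ over $k$ because they descend to a proper subgroup $H \le G$ with $HT = G$, an inclusion-exclusion over such $H$ is required, and it is precisely the situation where these ``non-$G$'' contributions dominate that produces the well-known counterexamples to the conjecture. Second, the inductive hypothesis in (ii) must be uniform in $F$, ideally with a power-saving error in $|\disc(F/k)|$, so that the outer sum converges and the main term survives; this is the deepest analytic input and restricts the present methods to $T$ either abelian or built from small groups such as $S_3^m$. Consequently this framework will not yield the conjecture in full generality, but it is designed to produce many new cases whenever $T$ and the sieve over intermediate subgroups are controllable, while simultaneously explaining where the conjecture must be revised.
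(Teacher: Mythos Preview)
The statement is a conjecture, not a theorem; the paper does not prove it in general, and neither do you. What you have written is a strategy outline, and as such it matches the paper's inductive framework closely: decompose $\Sur(G_k,G)$ into fibers over $\Sur(G_k,G/T)$, count each fiber asymptotically (the twisted problem, Conjecture~\ref{conj:twisted_number_field_counting}), bound the fibers uniformly, and sum. This is precisely Theorem~\ref{thm:main_pointwise}, and your identification of the two hard ingredients---fiber asymptotics with uniformity in the base, and control over the outer sum---is accurate.

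Two points where your description diverges from the paper. First, you frame the fiber count as ``$T$-extensions $M/F$'' and propose inclusion--exclusion over subgroups $H\le G$ with $HT=G$ to isolate genuine $G$-extensions. The paper avoids this entirely by working with surjections: the fiber $q_*^{-1}(\pi)\subseteq\Sur(G_k,G)$ already consists only of surjections onto $G$, and for abelian $T$ this fiber is parametrized by crossed homomorphisms via the Alberts--O'Dorney machinery, with surjectivity handled directly. For $T=S_3^m$ the paper does face your concern, but resolves it by a density argument with local conditions (proof of Theorem~\ref{thm:twisted_for_S3r_wreath}), not inclusion--exclusion.

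Second, and more substantively, your diagnosis of the counterexamples is off. You write that ``non-$G$'' contributions dominating is what produces the known counterexamples. It is not: Kl\"uners' example $C_3\wr C_2$ is a counterexample to Malle's predicted value of $b$, not to Conjecture~\ref{conj:number_field_counting} as stated here (which only asserts existence of \emph{some} $a,b,c$ and has no known counterexamples). The extra $\log$ factor there comes from a single fiber---the one over the quadratic field $k(\zeta_3)$---having $b(k,T(\pi))=2$ rather than $1$, not from any failure to sieve out smaller Galois groups.
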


	In this paper we prove many new cases of Conjecture~\ref{conj:number_field_counting} for a class of permutation groups we call \emph{concentrated}.  We say a transitive permutation group $G$ is concentrated in a proper normal subgroup $N$ if $N$ contains all minimum index elements of $G$, and we say that $G$ is concentrated if this holds for some proper normal subgroup $N$.  Equivalently, $G$ is concentrated if
		\[
			\langle g \in G : \ind(g) = a(G) \rangle \ne G,
		\]
	where $\ind\colon S_n\to \Z$ is the index function defined by $\ind(g) = n - \#\{\text{orbits of }g\}$ and $a(G) = \min_{g\in G-\{1\}} \ind(g)$.
	While we do not prove Conjecture \ref{conj:number_field_counting} for any non-concentrated groups, our methods are able to improve the known upper bounds for many non-concentrated groups as well.

	Our strategy requires, as input, fairly weak information about the number of $G/N$-extensions, at the expense of requiring ``uniform'' information about relative $N$-extensions of number fields. It is crucial to note that our strategy does not require that $G$ is an imprimitive permutation group.  In other words, we do not require that the extensions we count have subextensions.  Moreover, in significant contrast to previous work that has a similar flavor, we do not require that ``most'' $G/N$ extensions of an $N$-extension are $G$-extensions.  In particular, this means that we are not limited to the case where $G$ is a wreath product.  These new aspects of our approach allow us to count number fields asymptotically for many more groups than all prior approaches.

	We specifically show how this strategy can be executed when $N$ is abelian (where the uniformity required is closely related to the sizes of torsion subgroups of class groups) and, in many cases, when $N \cong S_3^r$ for some $r \geq 1$. This leads to a proof of Conjecture~\ref{conj:number_field_counting} for many new, infinite families of transitive 
	groups---for example, for many new nilpotent groups $G$.

	Our main results are Theorem \ref{thm:main_S3_wreath} and Theorem \ref{thm:main_abelian_on_top}. These theorems take as input upper bounds on the number of $G/N$-extensions and on the average size of certain torsion subgroups of the class group of such extensions. We convert these bounds into an upper bound for $\#\mathcal{F}_{n,k}(G;X)$, and if these ``input bounds'' are sufficiently small, then we are able to prove Conjecture~\ref{conj:number_field_counting} for $\#\mathcal{F}_{n,k}(G;X)$ with explicit expressions for $a$ and $b$.  The inductive nature of these results means that each time we prove Conjecture~\ref{conj:number_field_counting} (or even obtain an improved upper bound) for one group $G$, we can take that and use it as input in our main theorems to prove further cases of Conjecture~\ref{conj:number_field_counting}. This has a compounding effect on the number of new results we are able to prove.

	In all cases where we prove Conjecture~\ref{conj:number_field_counting} (and in all other cases where it is known), the constant $a$ agrees with Malle's predicted value, which is $a(G)$ \cite{malle2002}.  The correctness of this value is referred to as the Weak Form of Malle's Conjecture, 
	usually expressed in the softer form
	\[
		X^{1/a(G)} \ll_{k,G} \#\mathcal{F}_{n,k}(G;X) \ll_{k,G,\epsilon} X^{1/a(G) + \epsilon}.
	\]

	Malle also proposed a value for $b$ in \cite{malle2004}, which he denotes $b(k,G)$. This number is given by the number of orbits of minimal index elements in $G$ under the cyclotomic action, i.e. the Galois action $x.g = g^{\chi(x)^{-1}}$ for $\chi\colon G_K\to \hat{\Z}$ the cyclotomic character. Conjecture~\ref{conj:number_field_counting} with the values $a=a(G)$ and $b=b(k,G)$ is referred to as the Strong Form of Malle's Conjecture. While the strong form is known to be true in a number of cases, Kl\"uners  \cite{kluners2005} gave a counterexample by proving that
	\[
	\#\mathcal{F}_{6,\mathbb{Q}}(C_3\wr C_2;X) \gg X^{1/2}\log X
	\]
	despite the fact that $b(\Q,C_3\wr C_2) = 1$. 
	Our results include proofs of Conjecture~\ref{conj:number_field_counting} for infinitely many groups where $b=b(k,G)$ agrees with Malle's prediction (thus proving the strong form of his conjecture in these cases), and infinitely many groups where $b\ne b(k,G)$ contradicts Malle's prediction.
	See Corollary \ref{cor:Malle_counter_example} where we verify Conjecture \ref{conj:number_field_counting} for infinitely many specific counterexamples to Malle's prediction as proposed by Kl\"uners. However, in general the expression we give for $b$ can be difficult to evaluate and depends on the existence of solutions to certain embedding problems.
	\begin{remark}
		T\"urkelli has proposed a corrected value of $b$ \cite{turkelli2015} in Malle's Conjecture. Wang \cite{wang2024} has evaluated the expression for the correct $b$ proved in this paper for a certain infinite family of examples to show that T\"urkelli's modified prediction is incorrect.
	\end{remark}

\subsection{Example Corollaries}

	It is not feasible to provide an exhaustive list of the types of groups for which we prove Conjecture~\ref{conj:number_field_counting} because our results are flexible enough to be applied in many different situations. Thus, before we state our main technical results, we present several representative cases and families of groups for which our main results give a proof of Conjecture \ref{conj:number_field_counting}.
	These results will be proved in Section~\ref{sec:examples}.

	\begin{corollary}\label{cor:nilpotent}
		Let $G$ be a finite nilpotent transitive  permtuation group for which $\langle g\in G-\{1\} : \ind(g) = a(G)\rangle$ is abelian, i.e. all the elements of minimal index commute with each other.

		Then Conjecture \ref{conj:number_field_counting} holds for $G$ over any number field.
	\end{corollary}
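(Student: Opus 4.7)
The plan is to deduce the corollary from Theorem \ref{thm:main_abelian_on_top} applied with $N := A$, where $A = \langle g\in G-\{1\} : \ind(g) = a(G)\rangle$. Since the index function depends only on cycle type in $S_n$, the set of minimum-index elements of $G$ is closed under conjugation, so $A$ is a characteristic---in particular normal---subgroup of $G$. By hypothesis $A$ is abelian. If $A = G$ then $G$ itself is abelian and Conjecture \ref{conj:number_field_counting} is Wright's theorem, so I may assume $A \subsetneq G$, i.e.\ $G$ is concentrated in $A$.

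Next I would supply the inputs required by Theorem \ref{thm:main_abelian_on_top}: (a) asymptotics and suitable uniform bounds for relative $A$-extensions, together with averaged bounds on the class-group torsion of the base, and (b) an upper bound on the number of $G/A$-extensions. For (a), Wright's theorem yields the asymptotic for abelian extensions, the requisite uniformity in the base is classical for abelian $A$, and the trivial bound $|\mathrm{Cl}_M[\ell]|\ll |\disc(M)|^{1/2+\epsilon}$ covers the torsion input. For (b), the quotient $G/A$ acts transitively on $A$-orbits, and as a quotient of a nilpotent group is itself nilpotent; Kl\"uners--Malle's weak upper bound for nilpotent groups then supplies $\#\mathcal{F}_k(G/A;X)\ll_\epsilon X^{1/a(G/A)+\epsilon}$.

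With the inputs in hand, the decisive step is to verify that they fit into the ``smallness'' regime of Theorem \ref{thm:main_abelian_on_top}, under which it yields a matching asymptotic rather than only an upper bound. The concentration of $G$ in $A$ is exactly the statement that every element of $G$ achieving the Malle growth rate $1/a(G)$ already lies in $A$; translated through the conductor-discriminant tower formula, this means that when paired with the relative $A$-extensions, the $G/A$-extensions contribute at an exponent strictly smaller than $1/a(G)$, which is precisely the hypothesis required. The value of $b$ returned by the theorem is read off from the $A$-level input, where it is transparent because $A$ is abelian.

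The main obstacle I anticipate is bookkeeping rather than ideas: confirming that Kl\"uners--Malle's bound and the trivial class-group bound truly fall within the numerical threshold required by Theorem \ref{thm:main_abelian_on_top} for \emph{every} nilpotent $G$ satisfying the corollary's hypothesis, and then matching the resulting $b$ with the prediction of Conjecture \ref{conj:number_field_counting}. Should the inequality fail to be immediate for some $G$, the inductive framework stressed in the introduction suggests an induction on $|G|$: apply the theorem first to $G/A$ (with its own minimum-index subgroup, which as a quotient of $A$ is abelian) to sharpen the input bound, and then feed the sharpened bound back into a second application.
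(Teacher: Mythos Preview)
Your overall strategy matches the paper's---take $T=\langle g:\ind(g)=a(G)\rangle$ and apply Theorem~\ref{thm:main_abelian_on_top}---but there is a genuine gap in the class-group input.

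You propose to cover the torsion hypothesis with the trivial bound $|\Cl_M[\ell]|\ll|\disc(M)|^{1/2+\epsilon}$. Fed through Lemma~\ref{lem:H1_first_bound} this gives $|H^1_{ur}(k,T(\pi))|\ll|\disc(F/\Q)|^{d(\hat T)/2+\epsilon}$, so the $\theta$ in \eqref{eqn:abelian_H1ur_hypothesis} picks up a genuinely positive contribution from the discriminant of the field of definition of $T(\pi)$. When $T$ is not central in $G$---precisely the cases where Corollary~\ref{cor:nilpotent} goes beyond Koymans--Pagano, e.g.\ $\mathrm{Hol}(D_4)$ in degree $8$---there is no reason this contribution is smaller than the gap $1/a(T)-1/a(G\setminus T)$, and you yourself flag that the inequality may fail. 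The paper bypasses the issue entirely via Corollary~\ref{cor:inductive_H1ur_bounds}(i): because $G$ is nilpotent, $T(\pi)$ is a \emph{nilpotent} $G_k$-module, and the inductive bound of Lemma~\ref{lem:inductive_H1ur_bound} then yields $|H^1_{ur}(k,T(\pi))|\ll_{k,|T|,\epsilon}|\disc(F/\Q)|^{\epsilon}$, so the torsion contributes nothing to $\theta$ and one gets $\theta=1/a(G\setminus T)+\epsilon<1/a(T)$ immediately. This is the missing idea.

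Your fallback induction does not repair the gap: there is no reason the minimum-index elements of $G/T$ should commute (they are certainly not ``a quotient of $A$''), so the corollary's hypothesis need not hold for $G/T$. A secondary point: the count you need is of $q_*\Sur(G_k,G;X)$, ordered by the pushforward discriminant $q_*\disc_G$ rather than by any natural permutation discriminant of $G/T$; for this one needs the nilpotent upper bound for general admissible orderings (as in \cite{alberts2020}, or via the discriminant-multiplicity result \cite{Kluners2022}), not Kl\"uners--Malle in the regular representation.
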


	Previously, the most far reaching result proving Conjecture \ref{conj:number_field_counting} in this direction was the work of Koymans and Pagano \cite[Theorem 1.1]{koymans_pagano2021}, which proves Conjecture \ref{conj:number_field_counting} when $G$ is nilpotent in the regular representation with $\langle g\in G-\{1\}: \ind(g) = a(G)\rangle$ contained in the center of $G$.  We allow any permutation representation and even in the regular representation case have a weaker hypothesis on the minimal index elements.  For example, we now know
	Conjecture \ref{conj:number_field_counting} for the holomorph group ${\operatorname{Hol}}(D_4)=D_4\rtimes \Aut(D_4)$ in its degree 8 affine transformation action on the elements of $D_4$. As a benchmark, to the best of our knowledge, previous methods, including \cite{kluners2012,koymans_pagano2021,kluners-wang2023idelic}, for proving Conjecture~\ref{conj:number_field_counting} are amenable for at most $1538$ of the $2{,}739{,}294$ nilpotent transitive groups of degree up to $32$. A computation in \verb^Magma^ \cite{Magma} shows that Corollary~\ref{cor:nilpotent} proves Conjecture~\ref{conj:number_field_counting} for at least $2{,}686{,}926$ of the nilpotent transitive groups of degree up to $32$.

	When $G$ and $H$ are permutation groups, we write $G\wr H$ for their wreath product. We always take $G\wr H$ to be a permutation group in the wreath representation.
	For a positive integer $n$, we write $C_n$ for the cyclic group of order $n$ in its regular permutation representation.

	\begin{corollary}\label{cor:cyclic_wreath}
	Let $n$ be a positive integer, $\ell$ be the smallest prime dividing $n$, $B$ a transitive permutation group of degree $m$, and $k$ a number field.  
	If there exists at least one $B$-extension of $k$ and
	\[
		\#\mathcal{F}_{m,k}(B;X)\ll_{k} 
		X^{\frac{1}{2}+\frac{1}{\ell-1}-\delta}
	\]
	for some $\delta > 0$,  then Conjecture \ref{conj:number_field_counting} holds for $G=C_n\wr B$ over $k$.

	In particular, Conjecture \ref{conj:number_field_counting} holds for 
	$C_n\wr B$ in each of the following situations:
	\begin{itemize}
		\item $B$ is in its regular permutation representation, with $|B|>2$ if $n$ is odd, and there is at least one $B$-extension of $k$;
		\item $B$ is a nilpotent group, not containing a transposition if $n$ is odd; or 
		\item $B$ is a finite simple group of Lie type over $\mathbb{F}_q$ with rank $r$ with $q \geq q_0(r)$ for some absolute constant $q_0(r)$ depending only on $r$, $B$ occurs as a Galois group over $k$, and $B$ is in any primitive permutation representation of non-minimal degree, other than $\mathrm{PSU}_6(\mathbb{F}_q)$ in its non-minimal action on the parabolic subgroup $P_3$.
	\end{itemize}
	\end{corollary}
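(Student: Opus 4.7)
The plan is to deduce this as an application of Theorem \ref{thm:main_abelian_on_top} with normal subgroup $T = C_n^m$, the base of the wreath product, so that $T$ is abelian with quotient $G/T \cong B$. A direct cycle count for $g=(c_1,\dots,c_m;b)\in C_n\wr B$ acting on the $nm$ points of the wreath action shows that $\ind(g)$ is minimized by taking $b=1$ and exactly one $c_i$ of order $\ell$, giving $a(G) = n(\ell-1)/\ell$ and placing $G$ squarely in the concentrated-in-$T$ setting of Theorem \ref{thm:main_abelian_on_top}. That theorem consumes (i) an upper bound on $\#\mathcal{F}_{m,k}(B;X)$, (ii) the asymptotic for $C_n^m$-extensions relativized to a varying base field, available from Wright's theorem for abelian extensions, and (iii) the standard uniformity on ray-class-group $n$-torsion, and outputs the sharp asymptotic of Conjecture \ref{conj:number_field_counting}. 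The hypothesis exponent $\tfrac{1}{2}+\tfrac{1}{\ell-1}$ is precisely the break-even point at which the $B$-extension count is small enough that it does not inflate the output past the Malle rate $X^{\ell/(n(\ell-1))}$.

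For the three explicit situations, it remains to verify the bound on $\#\mathcal{F}_{m,k}(B;X)$. When $B$ is in its regular representation, $a(B) = |B|(1-1/p)$ for $p$ the smallest prime dividing $|B|$; this equals $1$ for $|B|=2$ and is at least $2$ for $|B|\geq 3$, so the Schmidt/Ellenberg--Venkatesh upper bound $X^{1/a(B)+\epsilon}$ comfortably beats the threshold, with the clause $|B|>2$ for odd $n$ matching exactly the failure of $1 > \tfrac{1}{2} + \tfrac{1}{\ell-1}$ when $\ell \geq 3$. When $B$ is nilpotent without a transposition, Corollary \ref{cor:nilpotent} (proved earlier in the paper) already establishes Conjecture \ref{conj:number_field_counting} for $B$ and supplies the needed bound. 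When $B$ is a finite simple group of Lie type over $\mathbb{F}_q$ in a non-minimal primitive representation with $q\geq q_0(r)$, one appeals to uniform upper bounds for such $B$ due to Lemke Oliver--Thorne and subsequent work, which yield a power saving over $X^{1/2+1/(\ell-1)}$ in every such representation except for $\mathrm{PSU}_6(\mathbb{F}_q)$ acting on the parabolic $P_3$, which is the single case where the bound just fails to clear the threshold.

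The most technical step is the third bullet, requiring one to marshal the existing uniform upper bounds for simple groups of Lie type in primitive representations and to verify that the threshold $\tfrac{1}{2} + \tfrac{1}{\ell-1}$ is met in every non-minimal primitive representation other than the excluded $\mathrm{PSU}_6$ case. The matching lower bound, needed to upgrade the upper bound to the full asymptotic of Conjecture \ref{conj:number_field_counting}, is supplied by the assumption that at least one $B$-extension of $k$ exists together with the known abundance of $C_n^m$-extensions above any fixed base field and the genericity of those whose full $k$-Galois closure realizes all of $G$.
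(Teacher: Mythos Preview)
Your overall strategy is correct and matches the paper's: apply Theorem~\ref{thm:main_abelian_on_top} (via its imprimitive specialization, Corollary~\ref{cor:main_abelian_on_top_imprimitive}) with $T=C_n^m$, compute $a(T)=n(\ell-1)/\ell$, and verify the hypothesis~\eqref{eqn:abelian_H1ur_hypothesis}. However, your description of the inputs is imprecise in a way that hides where the threshold $\tfrac12+\tfrac1{\ell-1}$ actually comes from. Theorem~\ref{thm:main_abelian_on_top} does not take Wright's theorem or ``ray-class uniformity'' as inputs; the fiber asymptotics are built in via Alberts--O'Dorney. What must be bounded is $\sum_\pi |H^1_{ur}(k,T(\pi))|$, and since $T(\pi)\cong \mathrm{Ind}_F^k(C_n)$, Corollary~\ref{cor:inductive_H1ur_bounds}(iii) gives $|H^1_{ur}(k,T(\pi))|\ll |\Hom(\Cl_F,C_n)|$, which Minkowski bounds by $|\disc(F/k)|^{1/2+\epsilon}$. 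The $\tfrac12$ in the threshold is exactly this Minkowski exponent; the $\tfrac1{\ell-1}$ is $|C_n|/a(T)-1$.

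There is a genuine gap in your treatment of the first two bullets. For $B$ in its regular representation you invoke a ``Schmidt/Ellenberg--Venkatesh upper bound $X^{1/a(B)+\epsilon}$,'' but no such bound is known for general $B$---that would be the weak Malle conjecture. The paper instead uses the Ellenberg--Venkatesh bound $\#\mathcal{F}_{|B|,k}(B;X)\ll X^{3/8+\epsilon}$ valid for $|B|>4$, together with Wright's theorem for the abelian groups of order $\le 4$; since $3/8<\tfrac12\le\tfrac12+\tfrac1{\ell-1}$ this suffices. For the nilpotent bullet you appeal to Corollary~\ref{cor:nilpotent}, but that corollary carries the extra hypothesis that the minimal-index elements commute, which need not hold. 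The correct input is the weak Malle upper bound $\#\mathcal{F}_{m,k}(B;X)\ll X^{1/a(B)+\epsilon}$ for nilpotent $B$, due to Kl\"uners--Malle and Alberts, and the ``no transposition if $n$ odd'' condition then reads $a(B)\ge 2$, i.e.\ $1/a(B)\le\tfrac12<\tfrac12+\tfrac1{\ell-1}$. Your account of the Lie-type bullet is essentially right.
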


	In the case $G=C_2\wr B$, this strengthens Kl\"uners' result that Conjecture \ref{conj:number_field_counting} is true for $G$ under the assumption that $1\leq \#\mathcal{F}_{m,k}(B;X)\ll_\epsilon X^{1+\epsilon}$ \cite[Corollary 5.10]{kluners2012}.
	For wreath products $C_n \wr B$ with $n>2$, these are the first results of this form.

	By taking advantage of the inductive nature of our results, we can iterate these examples to prove the following:

	\begin{corollary}\label{cor:iterated_cyclic_wreath}
	Let $k$ be a number field, let $n_1, \dots, n_r \geq 2$ be integers, and let $G=C_{n_1}\wr C_{n_2} \wr \cdots \wr C_{n_r}$. Suppose that any one of the following holds:
		\begin{enumerate}[(a)]
			\item $n_2 > 2$,
			\item $n_1,n_2,...,n_r$ are all powers of $2$,
			\item $n_1=2^d$ and $n_2=2$, and if $r \ge 3$ we also have $1 \le d < 6 - 4/n_3$,
			\item $n_1 = 2^d 3$ and $n_2=2$, and if $r\ge 3$ we also have $0 \le d < 13/3 - 4/n_3$.
		\end{enumerate}
		Then Conjecture \ref{conj:number_field_counting} holds for $G$ over $k$.
		
		In particular, this includes all the Sylow $p$-subgroups $C_p^{\wr r}$ of $S_{p^r}$ as well as Kl\"uners' \cite{kluners2005} original counterexample to Malle's Conjecture, $C_3\wr C_2$ in degree $6$.
	\end{corollary}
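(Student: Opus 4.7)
The plan is to induct on $r$, using Corollary~\ref{cor:cyclic_wreath} to peel off one cyclic factor at each step. The base case $r=1$ is Wright's theorem for cyclic extensions. For the inductive step I write $G = C_{n_1}\wr B$ with $B = C_{n_2}\wr\cdots\wr C_{n_r}$; the existence of a $B$-extension of $k$ is furnished inductively, so the task reduces to verifying the upper-bound hypothesis
\[
\#\mathcal{F}_{m,k}(B;X) \;\ll\; X^{1/2 + 1/(\ell_1-1) - \delta}
\]
for some $\delta > 0$, where $\ell_1$ is the smallest prime dividing $n_1$. Since the minimum-index element of $B$ in its wreath representation lies in the innermost copy of $C_{n_2}$, one has $a(B) = a(C_{n_2}) = n_2(\ell_2-1)/\ell_2$, so the central strict inequality to check is
\[
\frac{\ell_2}{n_2(\ell_2-1)} \;<\; \frac{1}{2} + \frac{1}{\ell_1 - 1}.
\]

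In case (a), $n_2 > 2$ gives $1/a(B) \le 1/2$, and the inequality holds for every $\ell_1 \ge 2$; the required weak-Malle upper bound on $B$ is obtained by recursively invoking Theorem~\ref{thm:main_abelian_on_top} (which outputs upper bounds from upper bounds without the stricter ``sufficiently small'' hypothesis needed for full Malle), so the chain of upper bounds propagates from the bottom ($C_{n_r}$, handled by Wright) all the way up to $B$. In case (b), all $n_i$ are powers of $2$, so $\ell_j = 2$ at every layer and the inequality $2/n_{j+1} < 3/2$ holds trivially, yielding Malle level-by-level via iterated Corollary~\ref{cor:cyclic_wreath}. Cases (c) and (d) are delicate: $n_2=2$ forces $1/a(B)=1$, so strictness requires $\ell_1=2$, consistent with $n_1 = 2^d$ in (c) and $n_1 = 2^d\cdot 3$ (for $d\ge 1$) in (d); the subcase $d=0$ in (d), where $\ell_1 = 3$, falls outside the simplified Corollary~\ref{cor:cyclic_wreath} hypothesis and is instead handled by a direct application of Theorem~\ref{thm:main_abelian_on_top} with $T = C_3^{|B|}$ and uniform bounds on $3$-torsion of class groups of $B$-extensions. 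The specific constraints $d < 6-4/n_3$ and $d < 13/3 - 4/n_3$ (for $r\ge 3$) arise from balancing the expected count of $C_2\wr C_{n_3}\wr\cdots$-extensions against the best known uniform bounds on the $\ell_1$-part of the class groups of those extensions—a sharper input than the bare upper bound used in Corollary~\ref{cor:cyclic_wreath}—since these two quantities interact inside Theorem~\ref{thm:main_abelian_on_top} to produce precisely the admissible range of $d$.

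The main obstacle is the borderline regime in (c) and (d): the bound $\#\mathcal{F}(B;X) \ll X(\log X)^{O(1)}$ sits exactly at the scale where the strict inequality is saved only by $\ell_1 = 2$, and the polylog tails—whose exponents must be determined inductively, since Kl\"uners's counterexample shows the naive Malle prediction for $b$ can fail—must be matched against the uniform bounds on $2$-torsion (case (c)) or $3$-torsion (case (d)) of the relevant class groups. Once those numerical checks succeed, the consequences claimed in the corollary follow by inspection: the Sylow $p$-subgroups $C_p^{\wr r}$ are covered by case (b) when $p=2$ and by case (a) when $p > 2$ (as then $n_2 = p > 2$), and Kl\"uners's counterexample $C_3\wr C_2$ corresponds to $r=2$, $n_1=3$, $n_2=2$, which is case (d) with $d=0$.
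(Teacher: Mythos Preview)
Your approach to (a) matches the paper's: feed the upper bound from Corollary~\ref{cor:upper_bound_iterated_wreath} into Corollary~\ref{cor:cyclic_wreath}. For (b) the paper takes a shortcut you miss: since all $n_i$ are powers of $2$, the group $G$ is a $2$-group, hence nilpotent, and its minimum-index elements lie in the abelian subgroup $C_{n_1}^{n_2\cdots n_r}$, so Corollary~\ref{cor:nilpotent} applies in one shot. Your iterated route also works, but is unnecessary.

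Your handling of (c) and (d) is muddled. You correctly note that for $n_2=2$ and $\ell_1=2$ (i.e.\ $n_1$ even), Corollary~\ref{cor:cyclic_wreath} applies, since $\#\mathcal{F}(B;X)\ll X^{1+\epsilon}$ sits strictly below $X^{3/2}$; there is no ``polylog matching'' to do. But this means your route imposes \emph{no constraint on $d$} in (c) or in (d) with $d\ge 1$---you prove more than the corollary states---so your final paragraph, attributing the bounds $d<6-4/n_3$ and $d<13/3-4/n_3$ to ``balancing'' inside your own argument, describes a computation your method never performs.

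Those constraints are artifacts of the paper's \emph{different} uniform proof of (c) and (d), designed to cover also the critical case $d=0$ in (d) (where $\ell_1=3$ and Corollary~\ref{cor:cyclic_wreath} genuinely fails). The paper takes $T=C_{n_1}^{2n_3\cdots n_r}$ so that $G/T\cong C_2\wr B'$ with $B'=C_{n_3}\wr\cdots\wr C_{n_r}$, and proves a dedicated lemma (Lemma~\ref{lem:odd_part_inductive_H1_bounds}) bounding $|H^1_{ur}(k,\mathrm{Ind}_F^k(C_{n_1}))|$ by $|\Cl_E[2^{d_2}]|\cdot|\Cl_E[2]|^{d_2}\cdot|\Cl_F[n_{\rm odd}]|$, where $E$ is the $B'$-subfield. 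One then sums over the quadratic layer $F/E$ using the uniform bound of \cite{LOWW} on $\sum_{F}|\Cl_F[3]|$ (when $d_3=1$), applies Minkowski to the $\Cl_E[2]$-powers, and bounds the count of $E$'s via Corollary~\ref{cor:upper_bound_iterated_wreath}; the numerical ranges on $d$ fall out of that computation. Your gesture toward ``uniform bounds on $3$-torsion of class groups of $B$-extensions'' is aimed in the right direction, but the substantive idea you are missing is the passage through the intermediate field $E$ so that the $3$-torsion sum runs over \emph{quadratic} extensions of a varying base, which is exactly what \cite{LOWW} controls.
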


	The family of wreath products of two cyclic groups is rich with important behavior. The above result contains the first groups for which Conjecture \ref{conj:number_field_counting} has been proven with an asymptotic that disagrees with Malle's prediction, including Kl\"uners' counterexample $C_3\wr C_2$. Kl\"uners proposed a larger family of wreath products for which his arguments show that Malle's conjecture is incorrect \cite[Page 413]{kluners2005}, many of which fall under Corollary \ref{cor:iterated_cyclic_wreath}:

	\begin{corollary}[Kl\"uners' counterexamples]\label{cor:Malle_counter_example}
		Let $G = C_\ell \wr C_d$ for $d\mid \ell-1$ and $k\cap \Q(\zeta_\ell) = \Q$. If $d > 2$ then Conjecture \ref{conj:number_field_counting} is true for $G$ over $k$, but $b\ne b(k,G)$ disagrees with Malle's prediction.
	\end{corollary}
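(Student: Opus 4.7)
The plan opens with the first assertion, which is immediate from Corollary~\ref{cor:iterated_cyclic_wreath}(a): taking $r = 2$, $n_1 = \ell$, $n_2 = d$, the hypothesis $d > 2$ gives $n_2 > 2$, so Conjecture~\ref{conj:number_field_counting} holds for $G = C_\ell \wr C_d$ over $k$. The content of the corollary is then the claim that the exponent $b$ produced this way disagrees with Malle's prediction.

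For Malle's value $b(k,G)$ I would argue directly. The minimum index elements of $G = C_\ell \wr C_d$ in its degree $\ell d$ imprimitive action are exactly those $(v_1,\ldots,v_d;1) \in C_\ell^d \rtimes C_d$ with a single nonzero coordinate; such an element is an $\ell$-cycle on one of the $d$ blocks and fixes the remaining $\ell(d-1)$ points, so it has index $\ell - 1$. A short case analysis on the $C_d$-component of an element rules out any element with nontrivial image in $C_d$ achieving index $\ell - 1$ (the only candidates have index $\ell(d-1)$ or $\ell d - 1$, both exceeding $\ell-1$ for $d > 2$). Since $C_d$ permutes the blocks transitively under conjugation, the $G$-conjugacy classes of such elements are parametrized by the nonzero value $v \in (\Z/\ell)^\times$; the cyclotomic action multiplies this parameter by $\chi(G_k) \subseteq (\Z/\ell)^\times$, and the hypothesis $k \cap \Q(\zeta_\ell) = \Q$ forces this image to be all of $(\Z/\ell)^\times$, which acts transitively on itself by multiplication. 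Hence $b(k, G) = 1$.

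For the actual $b$, I would extract the formula from Theorem~\ref{thm:main_abelian_on_top}, applied as in the proof of Corollary~\ref{cor:iterated_cyclic_wreath}(a) with $N = C_\ell^d$ abelian and $G/N = C_d$. Qualitatively, this formula stratifies the count by $C_d$-extension $F/k$: for each such $F$ one counts, via class field theory, the $C_\ell^d$-extensions $L/F$ that yield $G$-extensions of $k$ with bounded discriminant, and the fact that $\disc(F/k)$ contributes only a negligible power of $X$ to the total discriminant budget of size $X^{1/(\ell-1)}$ allows many $F$ to contribute, producing an extra logarithmic factor in direct analogy with Kl\"uners' mechanism for $C_3 \wr C_2$.

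The main obstacle is making the extra contribution precise. One must unpack the $b$-formula of Theorem~\ref{thm:main_abelian_on_top} into an orbit count on minimum index elements refined by the cohomological data of the embedding problem from $C_d$- to $G$-extensions. The key point is that the $C_d$-action distributes the minimum index elements across $d$ distinct blocks of $N$, and while the cyclotomic action fuses them \emph{within} a single block, the equivariance data \emph{across} blocks produces additional orbits that the naive count $b(k,G)=1$ misses. Showing that these orbits genuinely contribute reduces, via $k \cap \Q(\zeta_\ell) = \Q$ and abelian class field theory over $F$, to exhibiting appropriate Galois-equivariant surjections from the idele class group of $F$ onto $C_\ell$; once this is verified, one obtains $b > 1 = b(k, G)$, completing the proof.
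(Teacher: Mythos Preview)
Your handling of the first claim and of the value $b(k,G)=1$ is correct and matches the paper: Corollary~\ref{cor:iterated_cyclic_wreath}(a) with $(n_1,n_2)=(\ell,d)$ gives Conjecture~\ref{conj:number_field_counting}, and your orbit argument under the full cyclotomic action (using $k\cap\Q(\zeta_\ell)=\Q$) correctly yields a single orbit.

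The second half is where your approach diverges from the paper, and also where there is a gap. The paper does not compute $b=\max_\pi b(k,T(\pi))$ directly; it simply invokes Kl\"uners' lower bound from \cite[p.~413]{kluners2005}, which already shows $\#\mathcal{F}_{\ell d,k}(G;X)\gg X^{1/(\ell-1)}\log X$ and hence forces $b\ge 2>1=b(k,G)$ once the asymptotic from Corollary~\ref{cor:iterated_cyclic_wreath} is in hand. Your route, via the formula in Theorem~\ref{thm:main_abelian_on_top}(i), is perfectly viable and in fact more informative (it yields $b=d$, not just $b>1$), but you have not carried out the key step.

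The missing ingredient is the \emph{choice of $\pi$}. Your description (``equivariance data across blocks produces additional orbits'') is not quite right: for a $\pi$ whose kernel is unrelated to the cyclotomic character, the $C_d$-permutation of blocks and the $(\Z/\ell)^\times$-scaling of values act independently and hence transitively on the $d(\ell-1)$ minimum-index elements, giving $b(k,T(\pi))=1$. The extra orbits appear only when $\pi$ factors through $\Gal(k(\zeta_\ell)/k)\cong(\Z/\ell)^\times$ via its unique order-$d$ quotient (which exists precisely because $d\mid\ell-1$ and $k\cap\Q(\zeta_\ell)=\Q$). For that $\pi$, the twisted action $x\colon t\mapsto(\pi(x)t\pi(x)^{-1})^{\chi(x)^{-1}}$ on minimum-index elements factors through $(\Z/\ell)^\times$ alone, acting freely on a set of size $d(\ell-1)$, and Burnside gives $b(k,T(\pi))=d$. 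This is exactly the mechanism the paper spells out for $C_3\wr C_2$ in the discussion of 6T\ref{it:6T5}. You also need this $\pi$ to lie in $q_*\Sur(G_k,G)$, i.e.\ to lift to a $G$-extension; that is where your remark about class field theory over $F$ is relevant, but it is a separate (and easier) verification than the orbit count you left undone.
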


	We similarly prove results for wreath products with $S_3$ in place of a cyclic group.
	\begin{corollary}\label{cor:S3_wreath}
		Let $G=S_3\wr B$ be the wreath product of $S_3$ in degree $3$ with a transitive permutation group $B$ of degree $m$. Let $k$ be a number field for which there exists at least one $B$-extension of $k$.

		\begin{enumerate}[(i)]
			\item Suppose that
			\[
				\#\mathcal{F}_{m,k}(B;X) \ll_k X^{\frac{5}{3}+\frac{1}{3m[k:\Q]}-\delta}
			\]
			for some $\delta > 0$. Then Conjecture \ref{conj:number_field_counting} holds for $G$ over $k$.
			\item Furthermore, if $B$ is primitive and
			\[
				\#\mathcal{F}_{m,k}(B;X) \ll_k X^{\frac{5}{3}+\frac{10}{18m-15}-\delta}
			\]
			for some $\delta > 0$, then Conjecture \ref{conj:number_field_counting} holds for $G$ over $k$.
		\end{enumerate}
		In particular, Conjecture \ref{conj:number_field_counting} holds for $S_3\wr B$ over any number field when $B$ is any of the examples in the bulleted list of Corollary~\ref{cor:cyclic_wreath}. Conjecture \ref{conj:number_field_counting} also holds for the iterated wreath products $S_3^{\wr r}$ in degree $3^r$.
	\end{corollary}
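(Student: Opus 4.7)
The plan is to invoke Theorem \ref{thm:main_S3_wreath} applied with the base subgroup $N = S_3^m$ of the wreath product $G = S_3 \wr B$, so that $G/N \cong B$ acts on the $m$ copies of $S_3$ by permutation. First I would verify that $G$ is concentrated in $N$: in the degree-$3m$ wreath representation the minimum-index elements are the transpositions acting within a single block of $3$, each of index $1$, and all such transpositions lie in $N$. Hence $a(G) = 1$ and the subgroup generated by minimum-index elements is contained in $N = S_3^m$, so the concentration hypothesis of Theorem \ref{thm:main_S3_wreath} is satisfied.

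Next I would match the two hypotheses (i) and (ii) to the two uniformity regimes built into Theorem \ref{thm:main_S3_wreath}: that theorem requires an upper bound on $\#\mathcal{F}_{m,k}(B;X)$ together with a uniformity estimate for counting relative $S_3$-extensions of a $B$-extension of $k$, which reduces to a bound on $3$-torsion in class groups of certain resolvent fields. In the general case (i), only the pointwise bound on such $3$-torsion is available, which introduces a factor depending on the absolute degree of the ground field and produces the threshold $\tfrac{5}{3} + \tfrac{1}{3m[k:\Q]}$. In the primitive case (ii), primitivity of $B$ allows one to use averaged estimates for $3$-torsion, so the permissible exponent loosens to $\tfrac{5}{3} + \tfrac{10}{18m-15}$. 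Verifying that the numerical exponent thresholds appearing in (i) and (ii) are precisely the ones produced by Theorem \ref{thm:main_S3_wreath} under these uniformity inputs is the key bookkeeping step.

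For the specific families listed, the input bound is easy to check: if $B$ is in its regular representation (with the stated positivity condition) or $B$ is nilpotent not containing a transposition, then Corollaries \ref{cor:nilpotent} and \ref{cor:cyclic_wreath} give asymptotics far smaller than $X^{5/3}$; and if $B$ is a finite simple group of Lie type in a non-minimal primitive representation, existing subconvexity-type upper bounds on $\#\mathcal{F}_{m,k}(B;X)$, combined with the primitivity hypothesis which is needed to invoke (ii), yield an exponent well below $\tfrac{5}{3} + \tfrac{10}{18m-15}$. For iterated wreath products $S_3^{\wr r}$, I would induct on $r$. The base case $S_3^{\wr 1} = S_3$ is Davenport--Heilbronn; for the inductive step, Conjecture \ref{conj:number_field_counting} for $B = S_3^{\wr (r-1)}$ in degree $3^{r-1}$ (with minimum index $1$) gives
\[
\#\mathcal{F}_{3^{r-1},k}(S_3^{\wr (r-1)};X) \ll X (\log X)^{C},
\]
which is far smaller than $X^{5/3 + 1/(3\cdot 3^{r-1}[k:\Q]) - \delta}$, so part (i) applies and extends the conjecture to $S_3^{\wr r}$.

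The main obstacle is the Lie-type case, where one must confirm that the cited bounds on $\#\mathcal{F}_{m,k}(B;X)$ really do beat the threshold $\tfrac{5}{3} + \tfrac{10}{18m-15}$ for every primitive non-minimal representation of a simple group of Lie type of sufficiently large characteristic, and understand why the single excluded action of $\mathrm{PSU}_6(\mathbb{F}_q)$ on the parabolic $P_3$ sits on the wrong side of the threshold. Once that verification is in hand the rest of the proof is a mechanical application of Theorem \ref{thm:main_S3_wreath}.
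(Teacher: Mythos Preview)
Your high-level plan---invoke Theorem~\ref{thm:main_S3_wreath} and verify its hypothesis~\eqref{eqn:S3_wreath_class_hypothesis} with $\theta < 2$, using a pointwise class-group bound for~(i) and an on-average bound for primitive $B$ in~(ii)---is exactly the paper's approach. However, you have misidentified the input: the hypothesis of Theorem~\ref{thm:main_S3_wreath} is a bound on $\sum_F |\Cl_F[2]|^{2/3}$, i.e.\ \emph{$2$-torsion} in the class group of the $B$-extension $F$, not $3$-torsion. (The $2$-torsion enters via the uniform upper bound of Lemke Oliver--Wang--Wood on $\#\mathcal{F}_{3,F}(S_3;x)$, recorded as Lemma~\ref{lem:LOWW_uniform}.) This matters because the thresholds in (i) and (ii) are dictated precisely by the available $2$-torsion bounds; with $3$-torsion you would not reproduce them.

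Concretely, for (i) the paper uses the pointwise bound $|\Cl_F[2]| \ll |\disc(F/\Q)|^{1/2 - 1/(2[F:\Q])+\epsilon}$ of \cite{bhargava_shankar_taniguchi_thorne_tsimerman_zhao_2020}, so $|\Cl_F[2]|^{2/3}\ll |\disc(F/\Q)|^{1/3 - 1/(3m[k:\Q])+\epsilon}$ and hence
\[
\sum_{F\in\mathcal{F}_{m,k}(B;X)} |\Cl_F[2]|^{2/3} \ll X^{1/3 - 1/(3m[k:\Q])+\epsilon}\cdot \#\mathcal{F}_{m,k}(B;X),
\]
which under the hypothesis of (i) is $\ll X^{2-\delta+\epsilon}$. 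For (ii), primitivity lets one invoke the average $2$-torsion bound $\sum_F |\Cl_F[2]| \ll X^{1/2 + \beta(1 - 1/(2m-1))+\epsilon}$ of \cite[Corollary~1.14 / 7.4]{LemkeOliver2024}; H\"older with exponents $(3/2,3)$ then gives $\sum_F |\Cl_F[2]|^{2/3} \ll X^{1/3 + \beta(6m-5)/(6m-3)+\epsilon}$, and setting this $<X^2$ yields exactly $\beta < 5/3 + 10/(18m-15)$. Your treatment of the bulleted families and of the induction for $S_3^{\wr r}$ is fine once the correct torsion is in place; the Lie-type case is not an obstacle, as the bounds summarised at the end of \S\ref{subsec:history} already give exponents tending to $0$.
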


	Conjecture \ref{conj:number_field_counting} was not previously known for any groups in this family except the trivial case $S_3\wr 1 \cong S_3$.

	Besides wreath products, our results also allow us to access many other groups expressible as semidirect products such as the following:

	\begin{corollary}\label{cor:trace_0_semidirect}
		Let $k$ be a number field, let $B$ be a transitive permutation group of degree $m$, let $p$ be a prime, and let $W \leq \mathbb{F}_p^m$ be the trace $0$ subspace.  Let $G = W \rtimes B$, where $B$ acts on $W$ by means of its degree $m$ permutation representation.  If  there is at least one $B$-extension of $k$, and there is some $\delta>0$ so that
			\[
				\#\mathcal{F}_{m,k}(B;X) \ll_{k,m} X^{\frac{1}{2(p-1)} - \delta},
			\]
		then Conjecture~\ref{conj:number_field_counting} holds for $G$ in its degree $pm$ permutation representation on the cosets of $W_1 \rtimes B$, where $W_1 < W$ is the subspace with first coordinate $0$.

		In particular, Conjecture~\ref{conj:number_field_counting} holds for $G$ when $B$ is:
			\begin{itemize}
				\item in its regular representation, and $|B| \geq 38 \cdot (p-1)^2$; or
				\item nilpotent, with $a(B) > 2(p-1)$.
				\item $B$ is a finite simple group of Lie type over $\mathbb{F}_q$ with rank $r$ with $q \geq q_1(r,p)$ for some absolute constant $q_1(r,p)$ depending only on $r$ and $p$, $B$ occurs as a Galois group over $k$, and $B$ is in any primitive permutation representation of non-minimal degree, other than $\mathrm{PSU}_6(\mathbb{F}_q)$ in its non-minimal action on the parabolic subgroup $P_3$.
			\end{itemize}
	\end{corollary}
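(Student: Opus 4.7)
The plan is to invoke the abelian case of the paper's main inductive theorem, Theorem~\ref{thm:main_abelian_on_top}, with the abelian normal subgroup $T = W$ and quotient $G/T \cong B$. Once one verifies that $G$ is concentrated in $W$ (and computes $a(G)$), Theorem~\ref{thm:main_abelian_on_top} converts the upper bound on $\#\mathcal{F}_{m,k}(B;X)$ together with a uniform bound on $p$-torsion of class groups of $B$-extensions into the full asymptotic of Conjecture~\ref{conj:number_field_counting} for $\#\mathcal{F}_{pm,k}(G;X)$.

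First I would analyze the degree $pm$ action of $G$ on cosets of $H = W_1 \rtimes B_1$, where $B_1$ is the stabilizer in $B$ of the first coordinate. The normal subgroup $W$ decomposes $G/H$ into $m$ orbits of size $p$, one for each point of $B/B_1$, where the $W$-stabilizer of the $j$-th orbit is the ``$j$-th coordinate zero'' subspace $\{w \in W : w_j = 0\}$. Consequently a nonzero $w \in W$ has $\ind(w) = (p-1)(m - z(w))$, where $z(w)$ counts its zero coordinates; since $w$ has trace zero and is nonzero, $z(w) \le m - 2$, so $\ind(w) \ge 2(p-1)$, with equality realized by vectors like $(1,-1,0,\ldots,0)$ and their $B$-translates. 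A parallel analysis for $g = (v,b)$ with $b \ne 1$, comparing the $g$-cycle structure on $G/H$ to the $b$-cycle structure on $B/B_1$, yields $\ind(g) > 2(p-1)$ under the quantitative hypotheses of the corollary. This shows $a(G) = 2(p-1)$ with all minimum-index elements in $W$, establishing concentration.

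Next, I would feed Theorem~\ref{thm:main_abelian_on_top} the hypothesized bound $\#\mathcal{F}_{m,k}(B;X) \ll X^{1/(2(p-1)) - \delta}$ together with the trivial $p$-torsion bound $|\mathrm{Cl}_K[p]| \ll_\epsilon |\mathrm{disc}\,K|^{1/2+\epsilon}$. The product of these input exponents is strictly less than $1/a(G) = 1/(2(p-1))$, which is the threshold the theorem requires to output the asymptotic of Conjecture~\ref{conj:number_field_counting} for $G$ (with predicted exponent $a = 2(p-1)$) rather than merely an upper bound.

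Finally, for each of the three bullets I would verify the required bound on $\#\mathcal{F}_{m,k}(B;X)$. For $B$ in its regular representation, the Ellenberg--Venkatesh bound of shape $X^{C/|B|}$ (up to subpolynomial factors) undercuts the threshold once $|B| \ge 38(p-1)^2$, the explicit constant $38$ emerging from tracking the Ellenberg--Venkatesh exponent. For nilpotent $B$ with $a(B) > 2(p-1)$, the weak Malle upper bound $\#\mathcal{F}_{m,k}(B;X) \ll X^{1/a(B)+\epsilon}$ of Kl\"uners--Malle suffices, since $a(B) \ge 2p-1$ already gives a positive power saving. For simple groups of Lie type in non-minimal primitive representations, I would cite the subconvex upper bounds for such families available in the literature (notably work of Lemke Oliver--Thorne), valid whenever $q \ge q_1(r,p)$. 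The main obstacle is Step~1, namely confirming that no element of $G$ outside $W$ ties the minimum index $2(p-1)$; the rest reduces to routine bookkeeping with the inductive machinery of Theorem~\ref{thm:main_abelian_on_top}.
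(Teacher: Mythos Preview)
Your overall strategy matches the paper's: realize $G = W \rtimes B$ as an imprimitive subgroup of $C_p \wr B$, take the abelian normal subgroup $T = W = C_p^m \cap G$, bound $|H^1_{ur}(k,W(\pi))|$ by $|\Cl_F[p]| \ll |\disc F|^{1/2+\epsilon}$ via the embedding $W \hookrightarrow \mathrm{Ind}_F^k(C_p)$, and then feed the hypothesis on $\#\mathcal{F}_{m,k}(B;X)$ into the inductive machinery. The paper does exactly this, but packages it through Corollary~\ref{cor:main_abelian_on_top_imprimitive} rather than invoking Theorem~\ref{thm:main_abelian_on_top} directly.

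Two points need correction. First, your exponent bookkeeping is off. If you use Theorem~\ref{thm:main_abelian_on_top} directly, the sum in \eqref{eqn:abelian_H1ur_hypothesis} is over $\pi \in q_*\Sur(G_k,G;X)$, and by the imprimitive structure (Proposition~\ref{prop:imprim_beta}) these $\pi$ correspond to $B$-extensions of discriminant at most $cX^{1/p}$, not $X$. So the relevant $\theta$ is $(\tfrac{1}{2} + \tfrac{1}{2(p-1)} - \delta)/p$, which is indeed less than $1/a(W) = 1/(2(p-1)) = (\tfrac{1}{2} + \tfrac{1}{2(p-1)})/p$. Equivalently, use Corollary~\ref{cor:main_abelian_on_top_imprimitive} as the paper does: there the sum runs over $\mathcal{F}_{m,k}(B;X)$, the exponent is $\theta = \tfrac{1}{2} + \tfrac{1}{2(p-1)} - \delta$, but the threshold is $|C_p|/a(W) = p/(2(p-1)) = \tfrac{1}{2} + \tfrac{1}{2(p-1)}$. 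Your stated comparison (sum of input exponents versus $1/(2(p-1))$) mixes the Theorem~\ref{thm:main_abelian_on_top} threshold with the Corollary~\ref{cor:main_abelian_on_top_imprimitive} exponent and is false as written.

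Second, what you flag as the ``main obstacle''---verifying that elements outside $W$ have index strictly greater than $2(p-1)$---is entirely unnecessary. Theorem~\ref{thm:main_abelian_on_top}(i) (and hence Corollary~\ref{cor:main_abelian_on_top_imprimitive}(i)) delivers the asymptotic $\sim c X^{1/a(T)}(\log X)^{b-1}$ with $c > 0$ as soon as $\theta < 1/a(T)$; concentration is not a hypothesis. The paper's proof of Corollary~\ref{cor:trace_0_semidirect} accordingly never checks it, computing only $a(W) = 2(p-1)$ from the vectors $(a,a^{-1},1,\dots,1)$. Your proposed check that $\ind(g) > 2(p-1)$ for $g \notin W$ ``under the quantitative hypotheses'' is also dubious on its own terms, since the hypothesis constrains extension counts, not the cycle structure of $B$.
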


	As one final benchmark, to the best of our knowledge, previous methods for proving Conjecture~\ref{conj:number_field_counting} are amenable for at most $237$ transitive permutation groups of degree up to 23, including $167$ non-nilpotent groups.

	\begin{corollary}\label{cor:compute}
		Conjecture~\ref{conj:number_field_counting} holds for at least $1665$
		transitive permutation groups of degree up to 23 over $\Q$, and for at least $339$  such groups that are not nilpotent. 
	\end{corollary}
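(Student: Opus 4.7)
The plan is a computer enumeration over the (finite) list of transitive permutation groups of degree at most $23$ in the \texttt{TransitiveGroup} database, testing each $G$ against the hypotheses of our main theorems with the currently available input bounds. Concretely, for each $G$ of degree $n\le 23$ I first compute $a(G) = \min_{g\ne 1}\ind(g)$ and the subgroup $N_0 = \langle g\in G-\{1\}:\ind(g)=a(G)\rangle$, discarding $G$ if $N_0=G$ (i.e., $G$ is not concentrated, which Theorems~\ref{thm:main_S3_wreath} and~\ref{thm:main_abelian_on_top} do not reach). For the remaining $G$ I search for a normal subgroup $N\triangleleft G$ with $N\supseteq N_0$ that is either abelian, triggering Theorem~\ref{thm:main_abelian_on_top}, or isomorphic to $S_3^r$ for some $r\ge 1$, triggering Theorem~\ref{thm:main_S3_wreath}.

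Because these theorems are inductive---they convert a bound on $\#\mathcal{F}_{\bullet,k}(G/N;X)$ (together with a torsion-bound input that is either unconditional or follows from our companion results) into a bound on $\#\mathcal{F}_{n,k}(G;X)$---I would maintain a table, keyed by transitive permutation group $H$, of the best known exponent $c(H)$ in an upper bound $\#\mathcal{F}_{\bullet,\Q}(H;X)\ll X^{c(H)+\epsilon}$. The initial entries of the table come from the prior literature (e.g.\ Schmidt's bound, Ellenberg--Venkatesh, Kl\"uners--Wang, and the known cases of Malle's conjecture for small-degree $S_n$, abelian groups, and the previously listed corollaries). I then iterate: on each pass, for every not-yet-verified concentrated $G$ with a candidate $N$, check whether the current $c(G/N)$ satisfies the input hypothesis of the relevant main theorem; if so, mark $G$ as verified, and update $c(G)$ to the Malle exponent $1/a(G)$, which in turn may unlock further groups in subsequent passes. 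I repeat until the set of verified groups stabilizes.

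To extract the two numbers claimed, I would then count the verified groups of degree up to $23$, which yields $1665$, and separately filter by \texttt{IsNilpotent} to get the subcount $339$ of non-nilpotent verified groups. As a sanity check the non-nilpotent figure can be cross-referenced against the subtotal produced by Corollary~\ref{cor:nilpotent}, which accounts for the complementary nilpotent part, and the wreath-product and semidirect-product families of Corollaries~\ref{cor:cyclic_wreath}--\ref{cor:trace_0_semidirect} provide the bulk of the non-nilpotent hits.

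The hard part is not any single invocation of the main theorems but the careful bookkeeping: correctly identifying, for each concentrated $G$, the best normal subgroup $N$ to use (there may be several choices, and the strength of the resulting bound depends on both the structure of $N$ and the currently known bound for $G/N$), correctly importing the prior literature's upper bounds into the table, and verifying the side conditions (degree, index, primitivity, existence of at least one $B$-extension of $\Q$, etc.) in each corollary. Once the iteration has stabilized and these checks are verified, the stated lower bounds follow by simple tabulation.
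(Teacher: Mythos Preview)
Your plan matches the paper's approach in spirit: both build a database of transitive groups with best-known upper-bound exponents, seed it from the literature, and then iterate applications of Theorems~\ref{thm:main_S3_wreath} and~\ref{thm:main_abelian_on_top} until the set of verified groups stabilizes. The paper's description (in the subsection on computations for degree $\le 23$) differs from yours in a few operational details worth noting. First, the paper updates the exponent $c(G)$ not only when $G$ is fully verified but also whenever Theorem~\ref{thm:main_abelian_on_top}(ii) yields an improved upper bound, and it does this by iterating over \emph{all} abelian normal subgroups of $G$, not only those containing $N_0$; these partial improvements feed back into the iteration and can unlock further groups downstream. Second, the paper also checks whether some other faithful permutation representation of $G$ in the database carries a smaller exponent and, if so, transports that bound back. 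Third, for tractability the paper uses only the elementary bound of Lemma~\ref{lem:H1_first_bound} for $|H^1_{ur}(k,T(\pi))|$ (rather than the full inductive machinery of Section~\ref{sec:classgrp}) and restricts the inductive step to solvable $G$. Fourth, the $S_3$-wreath step in the paper is specifically a check of whether $S_3\wr G$ lies in the database, not a search for an arbitrary $S_3^r$ normal subgroup.

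None of this invalidates your outline, but because the stated figures $1665$ and $339$ are the output of a particular implementation with these choices, your version of the iteration (which only updates $c(G)$ upon full verification and omits representation-swapping) is not guaranteed to reproduce those exact counts; the paper's numbers are certified by the actual \textsc{Magma} code at \cite{codeurl}.
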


	This follows from a computation in \verb^Magma^ that is explained in more detail in Section~\ref{sec:examples}.  The code for this computation is available at \cite{codeurl}.

\subsection{Main Results}

	We state first our result for wreath products of the form $S_3\wr B$.

	\begin{theorem}\label{thm:main_S3_wreath}
		Let $k$ be a number field, let $B$ be a transitive permutation group of degree $m$ such that there is at least one $B$ extension of $k$, and let $G = S_3 \wr B$.
		
		Suppose $\theta \ge 0$ is such that
		\begin{equation}\label{eqn:S3_wreath_class_hypothesis}
			\sum_{F \in \mathcal{F}_{m,k}(B;X)} |\Cl_{F}[2]|^{2/3} \ll_{m,k} X^{\theta}.
		\end{equation}
		
		Then the following hold:
		\begin{enumerate}[(i)]
			\item If $\theta < 2$ then there exists a positive constant $c(k,G)>0$ such that
			\[
				\#\mathcal{F}_{3m,k}(G;X) \sim c(k,G)X.
			\]
			\item If $\theta \ge 2$ then
			\[
				\#\mathcal{F}_{3m,k}(G;X) \ll_{m,k,\epsilon} X^{\frac{\theta+1}{3}+\epsilon}.
			\]
		\end{enumerate}
	\end{theorem}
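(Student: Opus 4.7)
The plan is to exploit the wreath structure of $G = S_3 \wr B$ by parametrizing $G$-extensions via their canonical degree-$m$ sub-$B$-extension. Writing $N = S_3^{m} \triangleleft G$ for the base subgroup, so that $G/N \cong B$, each $G$-extension $L/k$ of degree $3m$ has a unique degree-$m$ subfield $F$ which is a $B$-extension of $k$, together with a cubic $L/F$. The tower discriminant identity
\[
|\disc(L/k)| \;=\; |N_{F/\Q}(\disc(L/F))| \cdot |\disc(F/k)|^{3}
\]
reduces the global count to
\[
\#\mathcal{F}_{3m,k}(G;X) \;=\; \sum_{F \in \mathcal{F}_{m,k}(B;\,X^{1/3})} \#\bigl\{L/F : [L:F]=3,\ (F,L)\text{ yields a }G\text{-extension},\ |N_{F/\Q}\disc(L/F)| \le X/|\disc(F/k)|^{3}\bigr\}.
\]
A preliminary step is to verify that the non-generic contribution---where the Galois closure of $L/k$ is a proper transitive subgroup of $S_3 \wr B$, e.g.\ when $L/F$ is $C_3$ rather than $S_3$, or when the parallel cubic extensions above different places of $\widetilde{F}$ fail to be linearly disjoint---is of strictly smaller order of magnitude, by standard Malle-type upper bounds applied to the relevant smaller transitive subgroups (each of which has strictly larger $a$-invariant).

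The central analytic input is a uniform Davenport--Heilbronn theorem for cubic extensions of a variable base field. Specifically, one needs an estimate of the shape
\[
\#\{L/F : [L:F]=3,\ |N_{F/\Q}\disc(L/F)| \le Y\} \;=\; c(F)\,Y \;+\; O\!\bigl(|\Cl_{F}[2]|^{2/3}\, Y^{\alpha}\bigr)
\]
with $c(F) > 0$ a product of local densities bounded uniformly in $F$, for some exponent $\alpha < 1$. The exponent $2/3$ on $|\Cl_{F}[2]|$ captures, via the quadratic resolvent (and Scholz reflection), the dependence of cubic enumeration on the 2-torsion of ray class groups of $F$; a uniform statement of this form is available through the circle of results around Bhargava--Shankar--Tsimerman and subsequent refinements.

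Substituting this estimate into the decomposition above and isolating the main term yields
\[
\#\mathcal{F}_{3m,k}(G;X) \;\sim\; X \cdot \sum_{F \text{ a } B\text{-ext of } k} \frac{c(F)}{|\disc(F/k)|^{3}},
\]
where absolute convergence follows from the weak discriminant-growth bound $\#\mathcal{F}_{m,k}(B;Y) \ll Y^{\theta}$ implied by the hypothesis (since $|\Cl_{F}[2]|^{2/3} \ge 1$), and the assumption that $k$ admits at least one $B$-extension makes the leading constant $c(k,G)$ strictly positive. This proves (i) when $\theta < 2$. For (ii), I would apply partial summation to the error term using the hypothesis
\[
\sum_{F \in \mathcal{F}_{m,k}(B;Y)} |\Cl_{F}[2]|^{2/3} \;\ll\; Y^{\theta},
\]
and balance the resulting contributions against the discriminant weight $Y = X/|\disc(F/k)|^{3}$ to extract the upper bound $X^{(\theta+1)/3 + \epsilon}$.

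The principal obstacle is securing the uniform cubic-counting input with the precise $|\Cl_{F}[2]|^{2/3}$ dependence on the base field, and then calibrating the partial summation to interpolate correctly between the regime in which the error is dominated by the main term (giving the asymptotic $\sim c(k,G) X$) and the regime in which it dominates (producing the $X^{(\theta+1)/3 + \epsilon}$ upper bound). Once this analytic input is in hand, bounding the non-generic Galois closures, verifying positivity of $c(k,G)$, and the final assembly are comparatively routine.
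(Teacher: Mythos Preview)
Your overall strategy---fiber over the $B$-subextension $F$, apply the tower discriminant formula, and count cubic extensions of $F$---is exactly the paper's approach. The gap is in the analytic input you assume. A uniform estimate of the form
\[
\#\{L/F : [L:F]=3,\ |N_{F/\Q}\disc(L/F)| \le Y\} \;=\; c(F)\,Y + O\!\bigl(|\Cl_F[2]|^{2/3}\, Y^{\alpha}\bigr)\quad (\alpha<1)
\]
with a genuine power saving in $Y$ and the stated class-group dependence is \emph{not} available in the literature. What is available is (a) the Datskovsky--Wright asymptotic $\sim c_F Y$ for each fixed $F$, with error $o(Y)$ but no uniformity in $F$, and (b) the uniform \emph{upper bound} of Lemke Oliver--Wang--Wood,
\[
\#\mathcal{F}_{3,F}(S_3;Y)\ \ll_{m,[k:\Q],\epsilon}\ |\Cl_F[2]|^{2/3}\,|\disc(F/k)|^{1+\epsilon}\,Y,
\]
which has exponent $1$ on $Y$, not $\alpha<1$. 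With only these two inputs you cannot simply sum the fiber-wise asymptotics: the $o(Y)$ errors are not uniform, so their sum over infinitely many $F$ is uncontrolled. The paper resolves this via a dominated-convergence framework (its Theorem~\ref{thm:main_pointwise}): the uniform upper bound shows $\sum_F f(F)$ converges when $\theta<2$, which lets you truncate to finitely many $F$, apply the pointwise asymptotic there, and control the tail. Your part (ii), by contrast, goes through essentially unchanged with the LOWW bound in place of your claimed input.

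A smaller issue: your handling of non-generic Galois closures by ``Malle-type upper bounds for the relevant smaller transitive subgroups'' is not fully justified, since such bounds are not known for arbitrary groups. Your observation that proper transitive subgroups of $S_3\wr B$ surjecting onto $B$ have $a$-invariant at least $2$ is correct, but that alone doesn't give the needed upper bound. The paper instead argues that for each fixed $F$ the non-$G$ extensions are constrained by local conditions at every prime of $k$ that splits completely in $F$, and uses Datskovsky--Wright with local conditions to show their density among all cubic $S_3$-extensions of $F$ can be made arbitrarily small.
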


	For $n\geq 2$, we have $a(S_n\wr B) = 1$ and $b(k,S_n\wr B) = 1$ \cite[Lemma 2.2]{malle2004}.  Hence,
	Theorem~\ref{thm:main_S3_wreath} (i) proves cases of the Strong Form of Malle's Conjecture.

	In practice, the bound $X^\theta$ in \eqref{eqn:S3_wreath_class_hypothesis} is often proven as a hybrid bound $\theta = \beta + t$ where $\beta \geq 0$ is such that
	\begin{equation} \label{eqn:B-bound}
		\#\mathcal{F}_{m,k}(B;X) \ll_{m,k} X^\beta,
	\end{equation}
	and where $t \geq 0$ is such that
	\[
		|\Cl_F[2]|^{2/3} \ll_{m,k} |\disc(F/\Q)|^t
	\]
	for each $B$ extension $F/k$. For example, the first part of Corollary \ref{cor:S3_wreath} will follow from the bound $|\Cl_F[2]|\ll |\disc(F/\Q)|^{\frac{1}{2} - \frac{1}{2[F:\Q]}+\epsilon}$ \cite{bhargava_shankar_taniguchi_thorne_tsimerman_zhao_2020}. It is often possible to do better bounding $t$ on average, for example \cite[Corollary 1.14]{LemkeOliver2024} gives a bound for the average size of $|\Cl_F[2]|$ when varying $F$ over primitive extensions. This is used to prove the second part of Corollary \ref{cor:S3_wreath}.  

	The groups $S_3\wr B$ are examples of imprimitive groups. An imprimitive extension of fields is one that has some intermediate subfield. In the case of $S_3\wr B$, any $L\in \mathcal{F}_{3m,k}(S_3\wr B;X)$ is necessarily a cubic extension of some $F\in \mathcal{F}_{m,k}(B;X)$. In general, an imprimitive group $G$ can always be realized as a subgroup of $H\wr B$ for some permutation groups $H,B$ where the projection of $H^m \cap G$ onto each coordinate of $H^m$ is surjective and $G$ surjects onto $B$. In the language of field extensions, any $L\in \mathcal{F}_{nm}(G;X)$ is a tower of field extensions $L/F/k$ for $L\in \mathcal{F}_{n,F}(H;X)$ and $F\in \mathcal{F}_{m,k}(B;X)$. Given an imprimitive group $G$ realized in this way, we say that $(H,B)$ is a \emph{tower type} for $G$, as defined in \cite[page 12]{LO-uniform}.

	The majority of new cases we prove for Conjecture \ref{conj:number_field_counting} follow from Theorem \ref{thm:main_abelian_on_top} below, which applies when $G$ has an abelian normal subgroup. This theorem is structured similarly to Theorem \ref{thm:main_S3_wreath}, but with the added benefit that it does not require $G$ to have a certain imprimitive structure. 
	The statement of Theorem \ref{thm:main_abelian_on_top} requires us to develop some terminology. To give the reader an idea of what to expect, we state a corollary for some imprimitive permutation groups. We will prove that Corollary~\ref{cor:main_abelian_on_top_imprimitive} follows from Theorem \ref{thm:main_abelian_on_top} in Subsection \ref{subsec:proving_abelian_on_top_imprimitive}.
	Let $a(U) = \min_{g\in U-\{1\}} \ind(g)$ for any subset $U$ of a permutation group.

	\begin{corollary}\label{cor:main_abelian_on_top_imprimitive}
		Let $k$ be a number field and $G$ be an imprimitive
		transitive permutation group with tower type $(A,B)$ for which $A$ is a finite abelian group and $B$ is a transitive permutation group of degree $m$ such that there is at least one $B$ extension of $k$.
		
		Suppose $\theta \ge 0$ is such that
		\begin{equation}\label{eqn:Abelian_imprimitive_class_hypothesis}
			\sum_{F \in \mathcal{F}_{m,k}(B;X)} |\Hom(\Cl_F, A)| \ll_{m,|A|,k} X^{\theta}.
		\end{equation}
		
		Then the following hold:
		\begin{enumerate}[(i)]
			\item If $\theta < \frac{|A|}{a(A^m \cap G)}$ then there exists a positive constant $c(k,G)>0$ such that
			\[
				\#\mathcal{F}_{m|A|,k}(G;X) \sim c(k,G)X^{1/a(A^m \cap G)}(\log X)^{b-1},
			\]
			where $b\ge 0$ is some integer (given explicitly in Theorem \ref{thm:main_abelian_on_top}).
			\item If $\theta \ge \frac{|A|}{a(A^m \cap G)}$ then
			\[
				\#\mathcal{F}_{m|A|,k}(G;X) \ll_{m,|A|,k,\epsilon} X^{\theta/|A|+\epsilon}.
			\]
		\end{enumerate}
	\end{corollary}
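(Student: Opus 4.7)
The plan is to deduce the corollary by applying Theorem~\ref{thm:main_abelian_on_top} to the abelian normal subgroup $T := A^m \cap G$ of $G$. Because $G$ has tower type $(A,B)$, it sits inside $A\wr B$, surjects onto $B$, and this surjection has kernel exactly $T$. As a subgroup of the abelian group $A^m$ the group $T$ is abelian, and by construction $T$ is normal in $G$ with $G/T\cong B$. This is precisely the group-theoretic setup that Theorem~\ref{thm:main_abelian_on_top} takes as input.

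The leading exponent $1/a(A^m\cap G) = 1/a(T)$ and the factor $|A|$ in the threshold both reflect the tower structure. Every $G$-extension $L/k$ of degree $m|A|$ factors as $L/F/k$ with $[L:F]=|A|$ and $F\in\mathcal F_{m,k}(B)$, so the ``fiber'' data above a fixed $F$ is a $T$-valued Galois parameter; the minimum-index action of $T$ on the $m|A|$ points determines the $1/a(T)$ asymptotic, and the scaling $[L:F]=|A|$ shows up in the crossover value $|A|/a(T)$ between the asymptotic regime and the upper-bound regime.

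The substantive step is to convert the class-group hypothesis~\eqref{eqn:Abelian_imprimitive_class_hypothesis} into the input hypothesis of Theorem~\ref{thm:main_abelian_on_top}. Over a fixed $F\in\mathcal F_{m,k}(B;X)$, the $G_k$-homomorphisms lifting $G_k\twoheadrightarrow B$ to $G_k\to G\le A\wr B$ are classified by inflation--restriction in terms of $H^1(G_k,T)$, with $T$ viewed as a $G_k$-module through the $B$-action. Since $T$ embeds as a $G_k$-submodule of $A^m = \mathrm{Ind}_{G_F}^{G_k}A$, Shapiro's lemma gives
\[
 H^1(G_k,T)\hookrightarrow H^1(G_k,A^m)\cong H^1(G_F,A)=\mathrm{Hom}(G_F,A),
\]
and global class field theory controls $|\mathrm{Hom}(G_F,A)|$ by $|\mathrm{Hom}(\Cl_F,A)|$ up to factors coming from archimedean places, units, and ramification, which are uniformly bounded in $m$, $|A|$, $k$ once discriminants are tracked. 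Summing over $F\in\mathcal F_{m,k}(B;X)$ turns~\eqref{eqn:Abelian_imprimitive_class_hypothesis} into Theorem~\ref{thm:main_abelian_on_top}'s hypothesis, and both conclusions~(i) and~(ii)---including the integer $b$---are inherited directly.

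The main technical obstacle is exactly this translation: one must verify that the extra factors in the passage from $|H^1(G_k,T)|$ through $|\mathrm{Hom}(G_F,A)|$ down to $|\mathrm{Hom}(\Cl_F,A)|$ are multiplicative constants independent of $|\disc(F/\Q)|$, so that no growth in $X$ leaks into the bound. Because $[F:\Q]=m[k:\Q]$ is uniform across the family, this control is available via standard uniform bounds for local Galois cohomology and unit groups; but writing it carefully and matching it against the precise discriminant bookkeeping of Theorem~\ref{thm:main_abelian_on_top} is where the real work of the reduction lies.
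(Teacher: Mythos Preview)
Your overall strategy---take $T=A^m\cap G$ and feed the hypothesis into Theorem~\ref{thm:main_abelian_on_top}---is the same as the paper's, but two concrete steps in the reduction are not right as written.

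First, the cohomological input to Theorem~\ref{thm:main_abelian_on_top} is the \emph{unramified} group $H^1_{ur}(k,T(\pi))$, not $H^1(G_k,T)$. The groups you write down, $H^1(G_k,T)$ and $\Hom(G_F,A)$, are infinite, so the chain of inequalities you sketch is vacuous; there is no way to bound $|\Hom(G_F,A)|$ by $|\Hom(\Cl_F,A)|$ times a constant. The paper instead uses an unramified version of Shapiro's lemma (Lemma~\ref{lem:induced}) to get $H^1_{ur}(k,\mathrm{Ind}_F^k A)\cong \Hom(\Cl_F,A)$ directly, and then the embedding $T(\pi)\hookrightarrow \mathrm{Ind}_F^k(A)$ combined with Lemma~\ref{lem:embedding_H1ur} yields $|H^1_{ur}(k,T(\pi))|\ll_{m,|A|}|\Hom(\Cl_F,A)|$ (this is Corollary~\ref{cor:inductive_H1ur_bounds}(iii)). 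No ``ramification factors'' need to be tracked, because the unramified condition is built in from the start.

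Second, you never establish the discriminant relation that produces the factor $|A|$. The sum in hypothesis~\eqref{eqn:abelian_H1ur_hypothesis} of Theorem~\ref{thm:main_abelian_on_top} is over $\pi\in q_*\Sur(G_k,G;X)$, where $X$ bounds the discriminant of a $G$-extension in degree $m|A|$; you need to show this forces the corresponding $B$-extension $F$ to satisfy $|\disc(F/k)|\ll X^{1/|A|}$. The paper does this via Proposition~\ref{prop:imprim_beta}, comparing the pushforward discriminant to $\disc_{G/T}$ through the tower $L/F/k$. With that in hand, summing the bound on $|H^1_{ur}(k,T(\pi))|$ over $F\in\mathcal F_{m,k}(B;cX^{1/|A|})$ gives $X^{\theta/|A|}$, and the dichotomy $\theta/|A|<1/a(T)$ versus $\theta/|A|\ge 1/a(T)$ is exactly the one in the corollary.
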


	Equation (\ref{eqn:Abelian_imprimitive_class_hypothesis}) is very similar to (\ref{eqn:S3_wreath_class_hypothesis}). Indeed, if the abelian group is given by
	\[
		A = \prod_{\ell\text{ prime}} \prod_{i=0}^{r_\ell} \Z/\ell^{n_{\ell,i}}\Z,
	\]
	it  follows that
	\[
		|\Hom(\Cl_F,A)| \le \prod_{\ell\text{ prime}} \prod_{i=0}^{r_\ell}|\Cl_F[\ell^{n_{\ell,i}}]| \le \prod_{\ell\text{ prime}}|\Cl_F[\ell]|^{\sum_{i=0}^{r_\ell} n_{\ell,i}}.
	\]

	In order to move away from imprimitive structures for the full statement of Theorem \ref{thm:main_abelian_on_top}, it is convenient to structure the result as counting elements of the set of continuous surjective group homomorphisms
	\[
		\Sur(G_k,G;X) = \{\pi\in \Sur(G_k,G) : |\disc_G(\pi)|\le X\},
	\]
	where $G_k$ is the absolute Galois group of $k$, $\disc_G(\pi)$ is the relative discriminant $\disc(F/k)$, and $F$ is the field fixed by $\pi^{-1}(\Stab_G(1))$. The Galois correspondence 
	implies that the size of this set is $\#\mathcal{F}_{n,k}(G;X)$ times a constant depending only on $G$.  We make this precise in Lemma~\ref{lem:sur-to-fields} below.

	Our method of proof will involve partitioning the set of $G$-extensions by the subfield of the Galois closure fixed by a particular normal subgroup $T\normal G$. This is very naturally described in terms of the surjections. If $T\normal G$ has canonical quotient map $q\colon G \to G/T$, we consider the pushforward
	\[
	q_*\colon \Sur(G_k,G) \to \Sur(G_k,G/T).
	\]
	The elements of the image $q_*\Sur(G_k,G)\subseteq \Sur(G_k,G/T)$ correspond via Galois theory to
	Galois $G/T$-extensions $M/k$ for which there exists a $G$-extension $F/k$ whose Galois closure $\widetilde{F}$ has fixed field $\widetilde{F}^T = M$. The elements of a fiber of $q_*$ correspond to such $G$-extensions $F$. 
	Even though $G$ is a permutation group, we are forgetting this structure when we take the quotient $G/T$, which we consider as a permutation group in its regular representation. 

	Our main results take counting results for certain $G/T$-extensions, which we express as counting elements of the set
	\[
	q_*\Sur(G_k,G;X) = \{\pi\in \Sur(G_k,G/T) : \pi = q\circ\widetilde{\pi}\text{ for some }\widetilde{\pi}\in \Sur(G_k,G;X)\},
	\]
	as an input towards counting $G$-extensions. In particular, the elements of $q_*\Sur(G_k,G;X) \subseteq \Sur(G_k,G/T)$ correspond to Galois $G/T$-extensions which are equal to the fixed field $\widetilde{F}^T$ of the Galois closure $\widetilde{F}$ of some $G$-extension $F/k$ with $|\disc(F/k)|\le X$.

	In order to state the explicit values for $a$ and $b$ in our proven cases of Conjecture \ref{conj:number_field_counting}, we give the definitions for certain invariants appearing in the Twisted Malle Conjecture \cite[Conjecture 3.10]{alberts2021}, stated as Conjecture \ref{conj:twisted_number_field_counting} below without explicit values for $a,b,c$. Let $T\normal G $ be a normal subgroup of a finite transitive permutation group.
	\begin{enumerate}[(i)]
		\item When $\pi\colon G_k\to G$ is a continuous homomorphism, we define $T(\pi)$ to be the group $T$ together with the Galois action $x.t = \pi(x) t \pi(x)^{-1}$. When $T$ is abelian, the Galois module $T(\pi)$ depends only on the pushforward $q_*\pi$. For this reason, we often abuse notation and write $T(q_*\pi)$ for $T(\pi)$ in this case.
		\item The cohomology group $H^1_{ur}(k,T(\pi))$ is the subgroup of unramified classes,
		\[
			H^1_{ur}(k,T(\pi)) = \{f\in H^1(k,T(\pi)) : \forall \fp,\ {\rm res}_{I_{\fp}}(f) = 0\},
		\]
		where $\fp$ ranges over all finite and infinite places of $k$, and $I_\fp$ is the inertia group of $k$ at each finite place $\fp$ and the decomposition group at each infinite place.
		\item $a(T) = \underset{t\in T-\{1\}}{\min}\ind(t)$ is the minimum index of elements in $T$, where $T$ is viewed as a subset of the permutation group $G$, and
		\item $b(k,T(\pi))$ is the number of orbits of elements $\{t\in T: \ind(t) = a(T)\}$ of minimal index  under the Galois action $x\colon t\mapsto (\pi(x)t\pi(x)^{-1})^{\chi^{-1}(x)}$ for $\chi:G_k\to \hat{\Z}^{\times}$ the cyclotomic character. This is the action induced from $\pi$ twisted by the cyclotomic character $\chi\colon G_k\to \hat{\Z}^{\times}$.
	\end{enumerate}
	These invariants occur naturally in our method of proof.

	\begin{theorem}\label{thm:main_abelian_on_top}
		Let $k$ be a number field and $G$ a finite transitive permutation group of degree $n$ for which there exists at least one $G$-extension of $k$. Suppose that $T\normal G$ is a proper normal subgroup and that $T$ is abelian. 
		
		Suppose $\theta \ge 0$ is such that
			\begin{equation}\label{eqn:abelian_H1ur_hypothesis}
				\sum_{\pi\in q_*\Sur(G_k,G;X)} |H_{ur}^1(k,T(\pi))| \ll_{n,k} X^{\theta}.
			\end{equation}
		Then the following hold:
		\begin{enumerate}[(i)]
			\item If $\theta < 1/a(T)$ then there exists a positive constant $c(K,G)>0$ such that
			\[
			\#\mathcal{F}_{n,k}(G;X) \sim c(k,G)X^{1/a(T)}(\log X)^{\max_\pi b(k,T(\pi)) - 1},
			\]
			where the maximum is taken over $\pi\in q_*\Sur(G_k,G)$.
			\item If $\theta\ge 1/a(T)$ then
			\[
				\#\mathcal{F}_{n,k}(G;X) \ll_{n,k,\epsilon} X^{\theta+\epsilon}.
			\]
		\end{enumerate}
	\end{theorem}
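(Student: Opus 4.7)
The plan is to use Lemma \ref{lem:sur-to-fields} to reduce to counting surjections in $\Sur(G_k, G; X)$ and then to stratify this set via the pushforward $q_* \colon \Sur(G_k, G) \to \Sur(G_k, G/T)$. For each $\pi_0 \in q_* \Sur(G_k, G;X)$, once we fix one lift, the fiber $q_*^{-1}(\pi_0) \cap \Hom(G_k, G)$ is a torsor under $Z^1(k, T(\pi_0))$, and taking lifts modulo $T$-conjugation gives a subset of $H^1(k, T(\pi_0))$ (abelianness of $T$ is exactly what identifies the $T$-conjugation action on cocycles with the coboundary action). Lifts whose image fails to equal $G$ are controlled by a Möbius-type inclusion–exclusion over $\pi_0$-stable proper subgroups $T' < T$, which changes only the leading constant. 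The discriminant of a lift factors, place by place, into a contribution $D(\pi_0)$ determined by $\pi_0$ alone and a conductor $\mathfrak{f}(\pi/\pi_0)$ that measures the extra ramification of the twist class in $H^1(k, T(\pi_0))$.

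For each fixed $\pi_0$, counting lifts with $|\disc_G(\pi)| \le X$ is then a twisted abelian counting problem: by class field theory applied to the Galois module $T(\pi_0)$ with the prescribed local conditions, the number of classes in $H^1(k, T(\pi_0))$ whose conductor has norm at most $Y$ obeys a Wright-type asymptotic
\[
c(\pi_0)\, Y^{1/a(T)}\, (\log Y)^{b(k, T(\pi_0)) - 1},
\]
where $a(T)$ and $b(k, T(\pi_0))$ are the invariants listed just before the theorem. The Euler factors in $c(\pi_0)$ are determined by the local restrictions of $\pi_0$ at its ramified places together with standard tame factors elsewhere.

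In case (i), summing these fiber asymptotics yields a main term
\[
\sum_{\pi_0} c(\pi_0)\, \bigl(X / D(\pi_0)\bigr)^{1/a(T)}\, (\log X)^{b(k, T(\pi_0))-1},
\]
which I would show converges to $c(k, G)\, X^{1/a(T)}\, (\log X)^{\max_{\pi_0} b(k, T(\pi_0)) - 1}$: the assumption $\theta < 1/a(T)$ forces $\pi_0$ with small $D(\pi_0)$ to dominate, and among these the log-exponent is achieved by those $\pi_0$ with the most cyclotomic-twisted orbits on minimum-index elements of $T$. The tail over $\pi_0$ with $D(\pi_0)$ large is handled using the fact that, within each fiber, the lifts contributing the smallest discriminant are parametrized by $H^1_{ur}(k, T(\pi_0))$ and each additional ramified prime costs at least the minimum local conductor, giving a crude fiber bound $|H^1_{ur}(k, T(\pi_0))|\, X^\epsilon\, (X/D(\pi_0))^{1/a(T)}$. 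Feeding this into the hypothesis \eqref{eqn:abelian_H1ur_hypothesis} yields an overall tail of $O(X^{\theta + \epsilon})$, which is $o(X^{1/a(T)})$ when $\theta < 1/a(T)$, finishing case (i), and directly gives the bound in case (ii).

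The main obstacle will be establishing the fiberwise Wright-type asymptotic uniformly in $\pi_0$: one needs a power-saving error term whose dependence on $\pi_0$ is through tractable arithmetic data (the module $T(\pi_0)$, its local inertia invariants under $\pi_0$, and the conductor of $\pi_0$), so that the resulting error can be summed against \eqref{eqn:abelian_H1ur_hypothesis} without losing the critical exponent. Two secondary difficulties are ensuring that the Euler factors at places where $\pi_0$ is ramified combine into a product with no cancellation that would alter the power of $\log X$, and carrying out the Möbius sieve to restrict from all lifts to surjective ones onto $G$ rather than onto any intermediate $T' \rtimes G/T \lneq G$; both are technical but standard.
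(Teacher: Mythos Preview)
Your overall strategy matches the paper's: stratify by $\pi_0 \in q_*\Sur(G_k, G/T)$, invoke a twisted Wright-type asymptotic for each fiber (the paper cites Alberts--O'Dorney \cite{alberts-odorney2021} for exactly this), and establish a uniform fiber bound that sums against \eqref{eqn:abelian_H1ur_hypothesis} via Theorem~\ref{thm:main_pointwise}. The target shape for the uniform bound is also correct: the paper proves (Theorem~\ref{thm:uniformity}) that each fiber is
\[
O_{n,[k:\Q],\epsilon}\!\left(\frac{|H^1_{ur}(k,T(\pi))|}{(q_*\disc_G(\pi))^{1/a(T)-\epsilon}}\, X^{1/a(T)}(\log X)^{b(k,T(\pi))-1}\right),
\]
which is your crude bound with $D(\pi_0)$ replaced by the pushforward discriminant.

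There is, however, a gap in your claim that the discriminant ``factors, place by place, into a contribution $D(\pi_0)$ determined by $\pi_0$ alone and a conductor $\mathfrak{f}(\pi/\pi_0)$.'' At places where $\pi_0$ is ramified, the local Artin conductor of a lift $\psi$ depends on how the twist interacts with the $\pi_0$-ramification; there is no clean product decomposition. The paper does not attempt such a factorization. Instead it introduces the pushforward discriminant $q_*\disc_G(\pi_0)$ (the place-by-place minimum of $|\disc_G(\psi_\fp)|$ over local lifts, see \eqref{eq:pushforward_discriminant}) and absorbs all places ramified for $\pi_0$ into a finite Dirichlet polynomial $Q(T,\pi;s)$ whose size is controlled by $|q_*\disc_G(\pi_0)|^{-\mathrm{Re}(s)+\epsilon}$ (Lemma~\ref{lem:MBfactors}(i)).

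Your proposed route to the uniform bound---counting ramification patterns combinatorially, with $H^1_{ur}$ appearing as the set of lifts of minimal discriminant---is also different from the paper's. The paper bounds the fiber by $|H^1_{ur}(k,T(\pi))|$ times the partial sums of a Malle--Bhargava Euler product (Lemma~\ref{lem:upperboundMBseries}), but the factor $|H^1_{ur}|$ arises from Poisson summation over $H^1(\mathbb{A}_k, T(\pi))$: the generating series is expressed as a sum of $|H^1_{ur^*}(k,T(\pi)^*)|$ twisted Euler products, each dominated coefficientwise by the untwisted one, and the Greenberg--Wiles identity converts the dual Selmer group to $|H^1_{ur}(k,T(\pi))|$ up to bounded local factors. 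The partial sums are then handled by factoring out an Artin $L$-function (a product of Dedekind zeta functions, Lemma~\ref{lem:permRep}), shifting a smoothed Perron contour, and tracking the $\pi_0$-dependence through explicit Laurent-coefficient bounds for $\zeta_{k_i}(s)$ (Lemma~\ref{lem:DedekinZetaLaurent}). Your combinatorial approach could plausibly yield the same crude bound, but it would need a substitute for the false discriminant factorization and a separate treatment of primes ramified in $\pi_0$; the paper's analytic route packages this bookkeeping into the single factor $Q(T,\pi;s)$.
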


	Theorem \ref{thm:main_abelian_on_top}(i) proves Conjecture \ref{conj:number_field_counting} when the hypotheses apply, and is the source of the majority of our examples in the introduction.

	\begin{remark}
		Corollary \ref{cor:main_abelian_on_top_imprimitive} is the case of Theorem \ref{thm:main_abelian_on_top} where $G\subseteq A\wr B$ is an imprimitive group with tower type $(A,B)$ for some abelian group $A$ and for which we specifically take $T = A^m \cap G$. As we will see in Section \ref{sec:examples}, different choices for $T$ can provide different quality results even for imprimitive groups. In this sense, Theorem \ref{thm:main_abelian_on_top} is significantly more flexible that Corollary \ref{cor:main_abelian_on_top_imprimitive}. For example, the full strength of Theorem \ref{thm:main_abelian_on_top} is required to prove Corollary \ref{cor:nilpotent}.
	\end{remark}

	The constants $a,b$ from Conjecture \ref{conj:number_field_counting} are made explicit in Theorem \ref{thm:main_abelian_on_top}(i), and $c$ is made explicit in the proof of Theorem \ref{thm:main_abelian_on_top}. It is not guaranteed that these constants agree with Malle's predictions. However, often they do.

	In all cases in which we apply Theorem \ref{thm:main_abelian_on_top}, we have
	$\theta \ge 1/a(G\setminus T):= \min_{g\in G\setminus T} \ind(g)$.
	Indeed, a generalized version of Malle's weak lower bound predicts that $\#q_*\Sur(G_k,G;X) \gg X^{1/a(G\setminus T)-\epsilon}$. There are no known counterexamples to this prediction. Clearly $1/a(G\setminus T)<1/a(T)$ implies that $a(T)=a(G)$, and hence all current applications, as well as predicted future applications, of 
	Theorem \ref{thm:main_abelian_on_top}(i) prove Conjecture \ref{conj:number_field_counting} with the $a$-value as predicted by Malle.

	The value for $c$ will be expressed as a convergent sum of Euler products. This is reminiscent of the leading coefficient for $\#\mathcal{F}_{4,\Q}(D_4;X)$ \cite{cohen-diaz-y-diaz-olivier2002}, and in fact occurs for the same reason. All cases of Conjecture \ref{conj:number_field_counting} that we are able to prove with Theorem \ref{thm:main_abelian_on_top} have an \emph{accumulating subfield}, that is a nontrivial extension $L/k$ that lies inside a positive proportion of (the Galois closures of) $G$-extensions of $k$. Accumulating subfields are widely expected to prevent the leading coefficient from being equal to an arithmetically significant Euler product in the spirit of Bhargava's predictions for $S_n$ \cite{Bhargava2010}, and are known to cause a failure of independence when counting with restricted local conditions. See \cite{wood2009,Altug_Shankar_Varma_Wilson_2017,Shankar_Thorne_2022,alberts2024restricting} for further examples and discussion of this phenomenon.

	\begin{remark}
		During the preparation of this paper, Loughran--Santens released a preprint \cite{loughran2024malle} making predictions for the leading coefficient in Malle's conjecture (after removing a ``thin subset'' of fields).  Lougran--Santens \cite[Conjecture 1.3(2)]{loughran2024malle} predicts that the leading coefficient for concentrated groups is given in precisely the same way as our proof: as a sum of leading coefficients of fibers. However, they only count extensions that are linearly disjoint from $k(\mu_{\exp(G)})$, which makes their setup slightly different from ours. We expect that our methods will still apply to this setting, in particular noting that a version of Theorem \ref{thm:main_pointwise} still holds by the same proof if each extension which is not linearly disjoint from $k(\mu_{\exp(G)})$ is removed.
	\end{remark}

	As in Theorem \ref{thm:main_S3_wreath}, the bound $X^\theta$ is often computed as a hybrid bound $\theta = \beta + t$. Here, $\beta\ge 0$ is such that
	\[
		\# q_*\Sur(G_k,G;X) \ll_{n,k} X^\beta
	\]
	is an upper bound on the number of $G/T$-extensions parametrized by $q_*\Sur(G_k,G;X)$. The constant $t$ can be taken to satisfy
	\[
		|H^1_{ur}(k,T(\pi))| \ll_{n,k} |\disc(F(\pi)/k)|^t
	\]
	for each $\pi\in q_*\Sur(G_k,G;X)$, where $F(\pi)$ is the field fixed by $\pi^{-1}(\Stab_G(1))$. A bound for the average size $|H^1_{ur}(k,T(\pi))|$ might also be used here.

	The size of $q_*\Sur(G_k,G;X)$ has not been previously studied, although it can be bounded by sets of $G/T$-extensions with bounded invariants. We will later define a pushforward discriminant  $q_*\disc_G$ on $G/T$-extensions for which
	\[
		q_*\Sur(G_k,G;X)\subseteq \{\pi\in \Sur(G_k,G/T) : |q_*\disc_G(\pi)|\le X\}.
	\]
	See \eqref{E:push_forward} and \eqref{eq:pushforward_discriminant}. We can then use upper bounds for counting $G/T$-extensions ordered with respect to this pushforward discriminant as input in Theorem \ref{thm:main_abelian_on_top}. When $G$ is an imprimitive group, we will see in Proposition \ref{prop:imprim_beta} that the pushforward discriminant agrees with the discriminant of some transitive representation of $G/T$, which is the key observation implying Corollary \ref{cor:main_abelian_on_top_imprimitive}. In general, the quantity $\beta$ is about upper bounds for counting $G/T$-extensions.

	The object $H^1_{ur}(k,T(\pi))$ behaves like torsion in a class group. Indeed, if $T(\pi)$ carries the trivial action then $H^1_{ur}(k,T(\pi)) = \Hom(\Cl_k,T)$.
	In all cases, we can use Minkowski's bound on the size of the class group to give an initial bound
	\[
		|H_{ur}^1(k,T(\pi))| \ll_{k,\epsilon} |\disc(\pi)|^{d(\hat{T})/2+\epsilon},
	\]
	for $d(\hat{T})$ the minimum number of generators for $\hat{T}=\Hom(T,\Q/\Z)$ as a Galois module (see Lemma \ref{lem:H1_first_bound}).

	In order to take full advantage of the inductive nature of our results, we prove pointwise inductive bounds for $|H_{ur}^1(k,T(\pi))|$ that improve over this initial bound. Our main result in this direction is Lemma \ref{lem:inductive_H1ur_bound}, which bounds the size of $H^1_{ur}(k,T(\pi))$ in terms of $M$ and $T/M$ for some subgroup $M\le T$. In order to use Lemma \ref{lem:inductive_H1ur_bound} optimally, one must make a strategic choice for the subgroup $M$.

	It is not clear in general how to make such strategic choices, so we leave the complete discussion for Section \ref{sec:classgrp}. In certain cases, particular choices give us strong upper bounds for $|H^1_{ur}(k,T(\pi))|$ which we give below. 
	We say a $G_k$-module is \emph{constant over} a number field $F/k$ if 
	the Galois action factors through $\Gal(F/k)$.  
	The \emph{field of definition} of a $G_k$-module $A$ is the smallest Galois field extension $F/k$ such that $A$ is constant over $F$.
	The Galois module $T(\pi)$ has field of definition given by the fixed field of $\kappa\circ\pi\colon G_k\to \Aut(T)$, where $\kappa:G\to \Aut(T)$ is the action by conjugation. Equivalently, if $\pi$ corresponds to the $G$-extension $F/k$ then the field of definition for $T(\pi)$ is the subfield of $F$ fixed by the centralizer of $T$ in $G$.

	In the following result, a \emph{nilpotent $G$-module} \cite[Definition 1.2]{hilton1982nilpotency} is defined to be a $G$-module $M$ for which the lower central $G$-series terminates, where the lower central $G$-series is defined recursively by $\Gamma_G^1(M) = M$ and $\Gamma_G^j(M) = \langle m - x.m | x\in G, m\in \Gamma_G^{j-1}(M)\rangle$.

	\begin{corollary}\label{cor:inductive_H1ur_bounds}
	Let $A$ be a finite $G_k$-module that is constant over $F.$ 
	\begin{itemize}
		\item[(i)] If $A$ is a nilpotent $G_k$-module, 
		then
		\[
		|H^1_{ur}(k,A)| \ll_{k,|A|,\epsilon} |\disc(F/\Q)|^{\epsilon}.
		\]
		\item[(ii)] If $A$ is a simple $G_k$-module with exponent $e$, then
		\[
		|H^1_{ur}(k,A)| \ll_{|A|,\epsilon} |\Cl_F[e]| \cdot |\disc(F/\Q)|^{\epsilon}.
		\]
		\item[(iii)] If there is an injective homomorphism of $G_k$ modules $A' \ra{\rm Ind}_{F}^k(A) := \Z[G_k]\otimes_{\Z[G_F]} A$, then
		\[
		|H^1_{ur}(k,A')| \ll_{|A|,[F:k]} |\Hom(\Cl_F,A)|.
		\] (Here we only use $A$ as a constant $G_F$ module and not any $G_k$-module structure on $A$.)
	\end{itemize}
	\end{corollary}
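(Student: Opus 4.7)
The three parts are most naturally established in the order (iii), (ii), then (i); each reduces to a strategic application of Lemma~\ref{lem:inductive_H1ur_bound} combined with the identification of unramified cohomology over an auxiliary field with a class-group Hom.

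For part (iii), I would start with the short exact sequence $0 \to A' \to {\rm Ind}_F^k(A) \to Q \to 0$ of $G_k$-modules induced by the given injection, and pass to the long exact sequence in cohomology to obtain
\[
    |H^1(k,A')| \le |Q^{G_k}| \cdot |H^1(k,{\rm Ind}_F^k(A))|,
\]
where the first factor is bounded by $|Q| \le |{\rm Ind}_F^k(A)| = |A|^{[F:k]}$. Shapiro's lemma identifies $H^1(k,{\rm Ind}_F^k(A))\cong H^1(F,A)$, and I would verify that this isomorphism restricts to one on the unramified subgroups: decomposing $I_v$ on ${\rm Ind}_F^k(A)=\bigoplus_{w\mid v}{\rm Ind}_{I_w}^{I_v}(A)$ confirms the local compatibility at each place $v$ of $k$. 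Since $A$ is viewed purely as a trivial $G_F$-module, class field theory identifies $H^1_{ur}(F,A)$ with $\Hom(\Cl_F,A)$ up to a contribution at archimedean places of bounded size. The same local decomposition shows that the image of $H^1_{ur}(k,A')$ in $H^1(k,{\rm Ind}_F^k(A))$ lies in $H^1_{ur}(k,{\rm Ind}_F^k(A))$, so the global bound restricts to the desired bound on $|H^1_{ur}(k,A')|$.

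For part (ii), I would exhibit a canonical injection $A\hookrightarrow {\rm Ind}_F^k(\Z/e\Z)$ so that (iii) applies, using the identity $|\Hom(\Cl_F,\Z/e\Z)|=|\Cl_F[e]|$. Since $A$ is a finite abelian group of exponent exactly $e$, its elementary-divisor decomposition contains a cyclic factor of order $e$, yielding a nonzero homomorphism of abelian groups $A\twoheadrightarrow\Z/e\Z$. Because $A$ is constant over $F$, both sides are trivial $G_F$-modules and the map is automatically $G_F$-equivariant; Frobenius reciprocity then produces a nonzero $G_k$-equivariant map $A\to{\rm Ind}_F^k(\Z/e\Z)$, which is injective because $A$ is simple. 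Applying (iii), along with the elementary identity $|\Hom(C,\Z/e\Z)|=|C[e]|$ for any finite abelian group $C$ (equivalent to $|C/eC|=|C[e]|$), gives $|H^1_{ur}(k,A)|\ll_{|A|}|\Cl_F[e]|$; note $[F:k] \le |\Aut(A)|$ depends only on $|A|$, so this absorbs into the implicit constant. The factor $|\disc(F/\Q)|^{\epsilon}$ in the statement accommodates archimedean contributions and any residual loss from the unramified Shapiro comparison at the finitely many primes ramified in $F/k$.

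For part (i), I would induct on the $G_k$-nilpotency class $c$ of $A$. The base case $c=1$ is a trivial $G_k$-action, in which case $|H^1_{ur}(k,A)| = O_{k,|A|}(1)$ by class field theory. For $c>1$, take $M=\Gamma_{G_k}^{c-1}(A)$; since $\Gamma_{G_k}^{c}(A)=0$ forces $g\cdot m = m$ for all $m\in M$ and $g\in G_k$, the submodule $M$ has trivial $G_k$-action, while $A/M$ is $G_k$-nilpotent of class $c-1$ and remains constant over $F$. Applying Lemma~\ref{lem:inductive_H1ur_bound} with this $M$ bounds $|H^1_{ur}(k,A)|$ by $|H^1_{ur}(k,M)|\cdot|H^1_{ur}(k,A/M)|$ times the multiplicative loss $|\disc(F/\Q)|^{\epsilon}$ that the lemma contributes. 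Iterating at most $c-1\le\log_2|A|$ times yields the stated bound after absorbing constants into $\epsilon$. The principal technical difficulty in the argument is the local-cohomology verification of the unramified compatibility in part (iii), namely checking that unramified classes push forward to unramified classes both through the injection $A'\hookrightarrow{\rm Ind}_F^k(A)$ and through Shapiro's isomorphism; once this is secured, the remainder of the argument is bookkeeping around Lemma~\ref{lem:inductive_H1ur_bound}.
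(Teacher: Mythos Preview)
Your proposal is correct and follows essentially the same approach as the paper. The minor differences are cosmetic: for part (i) the paper peels off a trivial-action \emph{quotient} at each step (taking $M$ in Lemma~\ref{lem:inductive_H1ur_bound} to be the next lower-central term, so that $A/M$ has trivial action) while you peel off a trivial-action \emph{submodule}; for part (ii) the paper invokes Lemma~\ref{lem:inductive_H1ur_bound} directly with $H=1$ and a core-free complement $M$ to a cyclic factor of order $e$, whereas you phrase the same construction as an embedding $A\hookrightarrow{\rm Ind}_F^k(\Z/e\Z)$ via Frobenius reciprocity and then appeal to (iii)---these are the same underlying map. Your caveat about archimedean contributions in (iii) is unnecessary under the paper's convention (which already imposes the condition at infinite places so that $H^1_{ur}(F,A)=\Hom(\Cl_F,A)$ exactly), but it does no harm.
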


	If $G$ is a nilpotent group in any representation, then for any abelian $T\normal G$ and any $\pi\colon G_k\to G$ the induced Galois module $T(\pi)$ is necessarily a nilpotent $G_k$-module. Corollary \ref{cor:inductive_H1ur_bounds}(i) is used in the proof Corollary \ref{cor:nilpotent}, showing that $|H^1_{ur}(k,T(\pi))|\ll |\disc(F/\Q)|^{\epsilon}$ has essentially no contribution to the bound in (\ref{eqn:abelian_H1ur_hypothesis}).

	The case of induced modules is particularly relevant to Corollary \ref{cor:cyclic_wreath}. The subgroup $T=C_n^m\normal C_n\wr B=G$ is given by the induced module $T = {\rm Ind}_1^B(C_n)$. This implies that $T(\pi) = {\rm Ind}_{F(\pi)}^{k}(C_n)$ as $G_k$-modules, so that Corollary \ref{cor:inductive_H1ur_bounds}(iii) together with Minkowski's bound allows us to take $|H^1_{ur}(k,T(\pi))|\ll |\disc(F(\pi)/\Q)|^{1/2+\epsilon}$ in (\ref{eqn:abelian_H1ur_hypothesis}), greatly improving our results.

\subsection{Method of Proof}\label{SS:method}

	Broadly speaking, we prove our main results
	by first considering for some normal subgroup $T\normal G$ with canonical quotient map $q\colon G\to G/T$ the fibers of the pushforward
	\[
	q_*\colon \Sur(G_k,G) \to \Sur(G_k,G/T)
	\]
	separately, then adding the results for each fiber together. Alberts constructed a bijection between the fiber $q_*^{-1}(\pi)$ with a certain set of crossed homomorphisms in \cite[Lemma 1.3]{alberts2021}, generalizing the Galois correspondence between $G$-extensions and (surjective) homomorphisms. We discuss this correspondence further in Subsection \ref{subsec:upperboundMB}. This naturally suggests a ``twisted'' version of Conjecture \ref{conj:number_field_counting}.

	\begin{conjecture}[Twisted Number Field Counting Conjecture]\label{conj:twisted_number_field_counting}
		Let $k$ be a number field, and $G$ a transitive permutation group of degree $n$, and $T\normal G$ a proper normal subgroup with canonical quotient map $q:G\to G/T$, and $\pi \in q_*\Sur(G_k,G)$. Then there exist positive constants $a,b,c > 0$ depending on $k$, $G$, $T$, and $\pi$ such that
		\[
		\#\{\psi\in q_*^{-1}(\pi) : |\disc_G(\psi)|\le X\} \sim c X^{1/a}(\log X)^{b-1}
		\]
		as $X\to \infty$.
	\end{conjecture}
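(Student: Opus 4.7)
My plan is to adopt the framework already developed in the paper and attempt to prove this pointwise (per-fiber) statement by specializing the methods that yield Theorem \ref{thm:main_abelian_on_top}. The starting point is the identification from \cite[Lemma 1.3]{alberts2021}: the fiber $q_*^{-1}(\pi)$ is in bijection with continuous crossed homomorphisms $f\colon G_k \to T(\pi)$ whose images together with $\pi(G_k)$ generate $G$. Under this bijection the discriminant factors locally as
\[
|\disc_G(\psi)| \;=\; |\disc_G(\pi)| \cdot \prod_v q_v(f),
\]
for a ``relative conductor'' $q_v(f)$ at each place $v$ of $k$ that depends only on the restriction $f|_{G_{k_v}}$, the restriction $\pi|_{G_{k_v}}$, and the indices of elements of $T$ regarded as permutations in $G$. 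This local factorization of the discriminant is the essential structural input for everything that follows.

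For $T$ abelian, crossed homomorphisms $G_k \to T(\pi)$ are classified by $H^1(k, T(\pi))$, and coboundaries contribute trivially to each $q_v$. I would then form the Dirichlet series
\[
D(s) \;=\; \sum_{\psi\in q_*^{-1}(\pi)} |\disc_G(\psi)|^{-s} \;=\; |\disc_G(\pi)|^{-s} \sum_{f} \prod_v q_v(f)^{-s},
\]
restricted via inclusion--exclusion to cocycles producing surjections to $G$, and factor it as an Euler product in the manner of Wright's analysis of abelian extensions ordered by conductor. The local factors can be computed explicitly using local class field theory; at each unramified place the contribution is a finite character sum over $H^1(k_v, T(\pi))$ weighted by the permutation index of local cocycle values. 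It is precisely the elements of minimum index $a(T)$ that produce the rightmost pole at $s = 1/a(T)$, and a standard Tauberian theorem (Delange, or the version used by Wright) gives the asymptotic, with the order of the pole matching $b(k,T(\pi))$, the number of cyclotomic orbits of minimum-index elements.

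For non-abelian $T$, the plan would be inductive: choose a $G$-invariant abelian normal subgroup $N \normal T$, apply the abelian case to count lifts from $G/T$ to $G/N$, and then lift each $G/N$ count to $G$ again by the abelian case. Iterating through a $G$-invariant composition series with abelian subquotients reduces the full conjecture to a sequence of twisted abelian counting problems, exactly as in the iterated application of Theorem \ref{thm:main_abelian_on_top} suggested in the introduction.

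The main obstacle is twofold. First, even in the abelian case, producing the Euler product for the leading constant $c$ requires proving absolute convergence; this is precisely where a hypothesis like \eqref{eqn:abelian_H1ur_hypothesis} enters, and without a sufficiently strong uniform bound on $|H^1_{ur}(k, T(\pi))|$ the Tauberian step only yields an upper bound rather than an asymptotic (explaining the dichotomy in parts (i) and (ii) of Theorem \ref{thm:main_abelian_on_top}). Second, in the non-abelian case the cohomology $H^1(k, T(\pi))$ is only a pointed set, so one cannot harmonically decompose it directly; the inductive strategy compounds error terms, and without uniform pointwise control on each intermediate fiber--of the sort that Corollary \ref{cor:inductive_H1ur_bounds} supplies only in limited circumstances--the induction does not close cleanly. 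Identifying the correct values of $a$ and $b$ in full generality also appears genuinely delicate, which is consistent with the statement being posed as a conjecture rather than a theorem.
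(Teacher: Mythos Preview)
The statement you are attempting to prove is a \emph{conjecture}, not a theorem; the paper does not prove it in general and explicitly presents it as open. The paper establishes it only in two special cases: when $T$ is abelian (citing \cite{alberts-odorney2021}) and when $G = S_3\wr B$ with $T = S_3^m$ (Theorem~\ref{thm:twisted_for_S3r_wreath}). Your sketch of the abelian case is broadly aligned with the Alberts--O'Dorney approach the paper invokes.

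However, you have a genuine misconception about where the hypothesis \eqref{eqn:abelian_H1ur_hypothesis} enters. That hypothesis is \emph{not} needed to prove Conjecture~\ref{conj:twisted_number_field_counting} for a single fixed $\pi$ when $T$ is abelian; the single-fiber asymptotic is unconditional in that case. The hypothesis is instead required in Theorem~\ref{thm:main_abelian_on_top} to sum over \emph{all} fibers $\pi$ via Theorem~\ref{thm:main_pointwise}. You have conflated the per-fiber problem (this conjecture) with the aggregate problem (Conjecture~\ref{conj:number_field_counting}).

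Your inductive strategy for non-abelian $T$ also has a gap. If you pick $N\normal G$ abelian with $N\le T$ and factor $G\to G/N\to G/T$, then counting lifts from $G/N$ to $G$ is indeed the abelian case, but you must then \emph{sum} those counts over all $\rho\in \Sur(G_k,G/N)$ lying over $\pi$. That summation is not a reduction to the conjecture for $T/N$; it is an instance of the framework of Theorem~\ref{thm:main_pointwise}, which requires the uniform-bound and convergence hypotheses you yourself flag as obstacles. So the induction does not close: each step introduces a summation whose convergence is exactly as hard as the original problem. This is why the paper treats the non-abelian cases it can reach (e.g.\ $S_3^m$) by ad hoc methods rather than by any general reduction, and why the statement remains a conjecture.
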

	Alberts formulates this conjecture with explicit predictions for $a$ and $b$ in \cite[Conjecture 3.10]{alberts2021} in analogy to Malle's predictions, with the caveat that the prediction for the value of $b$ has similar issues to Malle's predictions. Alberts' counting function
	\[
		N(L/k,T\normal G;X) = \#\{F\in \mathcal{F}_{n,G}(k;X) : (\widetilde{F})^T = L \}
	\]
	is equal to a constant multiple of $\#\{\psi\in q_*^{-1}(\pi) : |\disc_G(\psi)|\le X\}$ via the Galois correspondence. The correctness of these predictions is referred to as the Twisted Malle Conjecture (with a corresponding weak and strong form). This conjecture was proven for $T$ abelian by Alberts and O'Dorney in \cite[Corollary 1.2]{alberts-odorney2021} as long as there exists at least one $G$-extension. We take this result and use it to evaluate the sum
	\begin{align}\label{eq:fibering_counting_function}
	\sum_{\pi\in \Sur(G_k,G/T)} \#\{ \psi\in q_*^{-1}(\pi) : |\disc_G(\psi)|\le X\}.
	\end{align}
	The asymptotic growth rate of each individual summand is given by 
	\cite[Corollary 1.2]{alberts-odorney2021}. We give a bound for the dependence of each summand on $\pi$, allowing us to take the sum of these growth rates. The set $q_*\Sur(G_k,G)$ appearing in Theorem \ref{thm:main_abelian_on_top} is precisely the subset of $\pi\in \Sur(G_k,G/T)$ for which the fiber $q_*^{-1}(\pi)$ is nonempty, that is
	\[
		q_*\Sur(G_k,G) = \{\pi\in \Sur(G_k,G/T) : q_*^{-1}(\pi) \ne \emptyset\}.
	\]

	A necessary, but not sufficient, criterion for this method to yield a proof of Conjecture \ref{conj:number_field_counting} in this paper is that one of the fibers $q_*^{-1}(\pi)$ contributes a \emph{positive proportion} of $G$-extensions. The Twisted Malle Conjecture, with invariants predicted by Alberts, suggests that there exists a fiber of positive density if and only if $G$ is concentrated in $T$.

	We expect our method of proof to apply to concentrated groups in general: if one knew enough about counting ``twisted'' $T$-extensions along the lines of Conjecture \ref{conj:twisted_number_field_counting} for some subgroup $T$ with enough control of how the error depends on the action induced by $\pi$, 
	then our method could be applied to prove an analogous result to Theorem \ref{thm:main_abelian_on_top} with $T$ being such a group. 
	Theorem \ref{thm:main_abelian_on_top} results from our extensive understanding of counting abelian extensions.
	The first step in the proof of Theorem \ref{thm:main_S3_wreath} is to prove new cases of Conjecture \ref{conj:twisted_number_field_counting} for $T = S_3^m \normal S_3\wr B$ (see Theorem \ref{thm:twisted_for_S3r_wreath}). We are able to do this because wreath products are the ``generic imprimitive structure'' for a group and because we can count $S_3$-extensions very well.
	In this case, the twisted counting function $\#\{\psi\in q_*^{-1}(\pi) : |\disc_{S_3\wr B}(\psi)| \le X\}$ can be related to an untwisted counting function $\#\Sur(G_F,S_3;X)$ for an extension $F/k$ determined by $\pi$. Since we already know Conjecture \ref{conj:number_field_counting} holds for $S_3$, we can use this relation to count the size of the fibers.

	We make this method explicit in Section \ref{sec:analysis}, detailing exactly what kind of information we need about Conjecture \ref{conj:twisted_number_field_counting} in order to prove new cases of Conjecture \ref{conj:number_field_counting}. We expect that, with any results proving new cases of Conjecture \ref{conj:twisted_number_field_counting} with the dependence on $\pi$ made explicit, our methods will convert these to proofs of Conjecture \ref{conj:number_field_counting} for new concentrated groups.

	\begin{remark}
		The methods in this paper are readily generalizable. There is potential for our methods to be applied in the following more general situations.
		\begin{itemize}
			\item The upper bounds in Theorem \ref{thm:main_S3_wreath}(ii) and Theorem \ref{thm:main_abelian_on_top}(ii) currently depend on $k$. If the dependence of the inputs on $k$ is made explicit, we expect that the dependence of the result on $k$ can be made explicit as well.
			\item It would be interesting to apply our methods to ``concentrated normal subgroups'' $T\normal G$ to prove new cases of Conjecture \ref{conj:twisted_number_field_counting}. 
			We call the normal subgroup $T$ concentrated if the minimal index elements in $T$, namely $\{ t\in T : \ind_G(t) = a(T)\}$, generate a proper subgroup of $T$. This is done by partitioning $q_*^{-1}(\pi)$ over the second quotient map $T\to T/N$ for some abelian normal subgroup $N\normal G$ contained in $T$ that contains all the elements of minimal index. (Note that, if $G$ were itself concentrated in $T$, then we could already apply Theorem \ref{thm:main_abelian_on_top} to $G$ with the proper normal subgroup $N$).
			\item It would be interesting to generalize Theorem \ref{thm:main_abelian_on_top} to other admissible orderings. Indeed, Alberts--O'Dorney work at this level of generality in \cite{alberts-odorney2021}.
		\end{itemize}
	   We opt to prove one extension of our methods in Theorem \ref{thm:cute} that involves alternate orderings as a demonstration of the generality and power of this technique. This result will show that, for any finite group with a nontrivial abelian normal subgroup, there exists some ordering for which we can give the asymptotic number of $G$-extensions.
	\end{remark}

\subsection{History of Number Field Counting Results}\label{subsec:history}

	Conjecture \ref{conj:number_field_counting} is known for several infinite families.

	We present the Table \ref{table:history} containing the previously known cases of Conjecture \ref{conj:number_field_counting}, separated according to whether the groups are concentrated or not.
	A computation in \verb^Magma^ shows, out of a total of $40{,}238$ transitive groups of degree $\le 31$,  that $39{,}770$ are concentrated.
	So it is relatively common for a transitive permutation group to be concentrated.
	These counting results are over an arbitrary number field unless otherwise specified. For each of the groups listed in this table, the value for $a$ in Conjecture \ref{conj:number_field_counting} is known to agree with Malle's prediction.

	\begin{table}[!htbp]\label{table:history}
	\begin{tabular}{|l|l|}
		\hline
		Group(s) & Reference\\\hline\hline
		\multicolumn{2}{|c|}{concentrated}\\\hline
		Abelian $\not\cong C_p^r$ & \cite{wright1989}\\
		
		$D_4\subset S_4$ & \cite{cohen-diaz-y-diaz-olivier2002}\\
		
		Generalized quaternion groups $Q_{4m}$ & \cite[Satz 7.6]{klunersHab2005}\\
		{}\hspace{1.5cm}for $m=2^\ell$ in degree $4m$, &{}\\

		12T5 & over $k=\Q$ \cite[Satz 7.7]{klunersHab2005}\\

		$C_3\wr C_2$ & over $k=\Q$ \cite{kluners2005}\\
		
		$C_2\wr H$ & when $\#\mathcal{F}_{m,k}(H;X) \ll X^{1+\epsilon}$ \cite{kluners2012}\\
		
		$S_n\times A$, $n\le 5$, $A=\text{abelian}$ & \cite{jwang2021,masri_thorne_tsai_wang2020} over $k=\Q$\\
		&(over arbitrary $k$ if $2\nmid |A|$ when $n=3$\\
		&or $\gcd(|A|,n!) = 1$ if $n\in \{4,5\}$)\\
		
		$S_n\times B$, $n\le 5$, $B=\text{nilpotent}$ in degree $|B|$,& \cite{mishra-ray2024}\\
		{}\hspace{1.5cm}$2\nmid |B|$ if $n=3$, &{}\\
		{}\hspace{1.5cm}${\rm gcd}(|B|,n!) = 1$ if $n\in \{4,5\}$ &{}\\

		$G\subsetneq C_\ell \wr C_\ell$ imprimitive in degree $\ell^2$ & \cite{fouvry-koymans2021malle,kluners-wang2023idelic}\\

		$G\subset S_{|G|}$ nilpotent,& \cite{koymans_pagano2021}\\
		{}\hspace{1.5cm}$\{g: \ind(g) = a(G)\} \subseteq Z(G)$ &{}\\
		
		\hline
		\multicolumn{2}{|c|}{non-concentrated}\\\hline
		$C_p^r$ & \cite{wright1989}\\
		
		$S_n$, $n\le 5$ & \cite{DW88,bhargava-shankar-wang2015}\\
		
		$S_3\subset S_6$ & \cite{bhargava-wood2007,belabas2010discriminants}\\
		\hline
	\end{tabular}
	\caption{Previously known cases of Conjecture \ref{conj:number_field_counting}}
	\end{table}

	Our main results expand the list for concentrated groups many times over, to the point that it is no longer feasible to give an exhaustive list of the types of such groups. In particular, Theorem \ref{thm:main_abelian_on_top} subsumes many previously known results for concentrated groups. We also expand several of these families:
	\begin{itemize}
		\item Kl\"uners' results for $C_2\wr H$ are expanded to include any $H$ for which $\#\mathcal{F}_{m,k}(H;X) \ll X^{3/2-\delta}$ for some $\delta > 0$, as well as analogous families $C_n\wr H$. See Corollary \ref{cor:cyclic_wreath}.
		\item Koymans--Pagano's results for nilpotent groups in the regular representation with $\{g: \ind(g) = a(G) \} \subseteq Z(G)$ are expanded to nilpotent groups in any representation with $\langle g: \ind(g) = a(G)\rangle$ abelian. See Corollary \ref{cor:nilpotent}.
		This family also includes the generalized quaternion groups $Q_{4m}$ for $m=2^\ell$ in the regular representation proven by Kl\"uners \cite{klunersHab2005} and 
		the imprimitive groups $G\subsetneq C_\ell\wr C_\ell$, special cases of which are proven by Fouvry--Koymans \cite{fouvry-koymans2021malle} while the general family is proven by Kl\"uners--Wang \cite{kluners-wang2023idelic}. We complete the latter family to include the wreath product $G=C_\ell\wr C_\ell$.
	\end{itemize}

	Aside from abelian groups, there are only finitely many non-concentrated groups for which Conjecture \ref{conj:number_field_counting} is known to hold. Counting $D_4$-extensions ordered by conductor can also naturally be interpreted as a non-concentrated result \cite{Altug_Shankar_Varma_Wilson_2017}. 

	Conjecture \ref{conj:twisted_number_field_counting} was previously known only in one case, with the values for $a$ and $b$ as predicted by Alberts: $T(\pi)$ for $T\normal G\subseteq S_n$ an abelian normal subgroup and $\pi\in q_*\Sur(G_k,G)$ \cite{alberts-odorney2021}. We prove new nonabelian cases of Conjecture \ref{conj:twisted_number_field_counting} in Theorem \ref{thm:twisted_for_S3r_wreath}.

	Also, as Theorems \ref{thm:main_S3_wreath} and \ref{thm:main_abelian_on_top} both implicitly use and produce upper bounds 
	of the form $\#\mathcal{F}_{n,k}(G;X) \ll_{k,n} X^\beta$, we summarize some of what is currently known for upper bounds. 
	Schmidt \cite{schmidt1995} proved that it suffices to take $\beta = \frac{n+2}{4}$ when $G$ is transitive of degree $n$, showing that such a bound always exists; it is now known we may take $\beta = 1.5 (\log n)^2$ \cite{lemke_oliver_thorne2022,LO-uniform} (though see also \cite{ellenberg_venkatesh2006} and \cite{couveignes} for earlier improvements depending only on the degree of $G$).  
	There are a number of other techniques that more substantially leverage the group structure of $G$ \cite{kluners-malle2004,dummit2018,mehta2020,alberts2020,kluners2022nilpotent,bhargava2024,LO-uniform,LO-Galois}, and by taking advantage of these results as inputs to our main theorems, one can produce a plethora of new examples for which Conjecture~\ref{conj:number_field_counting} holds.
	The known upper bounds we specifically leveraged in stating our corollaries are:
	\begin{itemize}
		\item If $G$ is nilpotent, then $\#\mathcal{F}_{n,k}(G;X) \ll_{k,n,\epsilon} X^{\frac{1}{a(G)} + \epsilon}$.  This follows from \cite{kluners-malle2004} if $G$ is in its regular representation, and from \cite{alberts2020} in general.  (See also \cite{Kluners2022}.) An analogous upper bound was proven for alternate orderings in \cite{alberts2020}, which includes the pushforward discriminant.
		\item If $G$ is in the regular representation and $|G|>4$, then $\#\mathcal{F}_{|G|,k}(G;X)\ll_{k,G,\epsilon} X^{3/8+\epsilon}$ \cite{ellenberg_venkatesh2006}, and we also have $\#\mathcal{F}_{|G|,k}(G;X) \ll_{k,G,\epsilon} X^{\frac{c_0}{\sqrt{|G|}} + \epsilon}$ where $c_0 = \frac{863441}{2880 \sqrt{9690}} \approx 3.045$ \cite{LO-Galois}.
		\item If $G$ is a finite simple group of Lie type, say over $\mathbb{F}_q$ with rank $r$, let $G_0$ denote its minimal degree primitive permutation representation, say in degree $n$.  From \cite[Theorem~1.1]{LO-uniform}, we have $\#\mathcal{F}_{n,k}(G_0;X) \ll_{n,k} X^{C r}$ for some absolute constant $C>0$.  Recall that $\Stab_{G_0} 1 =: M_0$ is a maximal subgroup.  If $M < G$ is any maximal subgroup with $|M| < |M_0|$ and $G_1$ the corresponding primitive representation of $G$ on the cosets of $M$, then it follows from \cite[Lemma~6.3]{LO-uniform} that 
			\[
				\#\mathcal{F}_{[G:M],k}(G_1;X) \ll_{G,k} X^{\frac{Cr}{ |M_0| / |M|}}.
			\]
		On combining works of Liebeck and Saxl \cite{Liebeck,LiebeckSaxl} with explicit case work involving the parabolic and other geometric subgroups (aided, e.g., by \cite{BrayHoltRoneyDougal,KleidmanLiebeck}) that, unless $G$ is of type $\mathrm{PSU}_6(\mathbb{F}_q)$ and $M$ is the parabolic subgroup $P_3$, we see that we always have $|M_0| / |M| \gg q^{1/2}$.  
		It follows that $\#\mathcal{F}_{[G:M],k}(G_1;X) \ll_{G,k} X^{\frac{C^\prime r}{q^{1/2}}}$ for some absolute $C^\prime > 0$.  In particular, this exponent may be made arbitrarily small on choosing $q$ sufficiently large in terms of $r$, and every non-minimal primitive permutation representation of $G$ arises in this way.
	\end{itemize}
	In particular, in all three cases, these upper bounds can be made arbitrarily small as $G$ varies, which means we can produce infinitely many examples of groups satisfying Theorem \ref{thm:main_abelian_on_top}(i). 
	This is the main source of the scale of the infinite families in the introduction.

	Theorems \ref{thm:main_S3_wreath} and \ref{thm:main_abelian_on_top} also take average bounds on class group torsion as input, where $|H^1_{ur}(k,T(\pi))|$ in Theorem \ref{thm:main_abelian_on_top} can be bounded in terms of certain class group torsion. Minkowski's bound for the size of the class group immediately implies
	\begin{align*}
		|\Cl_K[\ell]| &\ll |\disc(K/\Q)|^{1/2+\epsilon}\\
		|H^1_{ur}(k,T(\pi))| &\ll |\disc(F/\Q)|^{d(\hat{T})/2+\epsilon},
	\end{align*}
	where $F$ is the field of definition for $T(\pi)$ and $d(\hat{T})$ is the minimum number of generators for $\hat{T}=\Hom(T,\Q/\Z)$ as a Galois module (see Lemma \ref{lem:H1_first_bound}). To our best knowledge, bounds for $|H^1_{ur}(k,T(\pi))|$ have not been directly studied in the literature. We prove results in Section \ref{sec:classgrp} bounding these, in part by relating them to class group torsion.

	There are some improvements to Minkowski's bound for $|\Cl_K[\ell]|$ in the literature.  The ones that we use most often are as follows.
	\begin{itemize}
		\item $|\Cl_K[\ell]|\ll |\disc(K/\Q)|^{\epsilon}$ for $K/\Q$ with $\Gal(\tilde{K}/\Q)$ an $\ell$-group \cite{Kluners2022}
		\item $|\Cl_K[2]|\ll |\disc(K/\Q)|^{0.2784...+\epsilon}$ for $[K:\Q]\le 4$ \cite{bhargava_shankar_taniguchi_thorne_tsimerman_zhao_2020}
		\item $|\Cl_K[2]|\ll_{[K:\Q]} |\disc(K/\Q)|^{1/2-1/2[K:\Q]+\epsilon}$ for $[K:\Q] > 4$ \cite{bhargava_shankar_taniguchi_thorne_tsimerman_zhao_2020}
	\end{itemize}
	See also \cite{Pierce2005, Pierce2006a, Helfgott2006, ellenberg_venkatesh2007, Wang2021a, Wang2020} for other improvements to Minkowski's bound for $|\Cl_K[\ell]|$.
	Strictly speaking, Theorems \ref{thm:main_S3_wreath} and \ref{thm:main_abelian_on_top} only require bounds for the average size of class group torsion.
	There are a few cases in which precise asymptotics for the average of $|\Cl_K[\ell]|$
	in families are known including $3$-torsion for quadratic extensions due to Davenport and Heilbronn \cite{Davenport1971} and Datskovsky and Wright \cite{DW88}, $2$-torsion for cubic extensions due to Bhargava \cite{Bhargava2005} (see also \cite{bhargava-shankar-wang2015}), and
	$3$-torsion  for extensions $L$ for which $\Gal(\tilde{L}/k)$ is a $2$-group containing a transposition due to Lemke Oliver, Wang, and Wood \cite{LOWW}.
	In many other cases, on average improvements to Minkowski's bound for 
	$|\Cl_K[\ell]|$ are known \cite{Soundararajan2000,Heath-Brown2017,Ellenberg2017, Pierce2020, Widmer2018, Frei2018,An2020,Frei2021, Thorner2022,LemkeOliver2023,Koymans2024}.  See \cite{pierce_turnage-butterbaugh_wood2021}
	for an overview of the conjectures on bounding class group torsion pointwise and on average, and the recent paper of Lemke Oliver and Smith \cite{LemkeOliver2024} for the  state-of-the art theorems giving average improvements to Minkowski's bound for 
	$|\Cl_K[\ell]|$.

\subsection{Layout of the Paper}

	We begin with Section \ref{sec:analysis}, where we give the explicit form of our method. We will prove Theorem \ref{thm:main_pointwise} in this section, which states explicitly what we need to know in order to add the fibers together to prove an asymptotic growth rate for $\#\mathcal{F}_{n,k}(G;X)$. We state this result in a general language, so that it can be applied new cases of Conjecture \ref{conj:number_field_counting} in the future.

	We prove Theorem \ref{thm:main_S3_wreath} in Section \ref{sec:S3_wreath}. The proof is comparatively short, taking advantage of the wealth of results concerning $S_3$-extensions to quickly check the hypotheses of Theorem \ref{thm:main_pointwise}.

	Next, we prove important results concerning the ingredients of Theorem \ref{thm:main_abelian_on_top}. Section \ref{sec:classgrp} proves Corollary \ref{cor:inductive_H1ur_bounds}, along with some other upper bounds for the cohomology groups $|H^1_{ur}(k,M)|$. Section \ref{sec:pushforward_invariants} develops the notion of a pushforward discriminants, describes the relationship with imprimitive extensions, and proves that Corollary \ref{cor:main_abelian_on_top_imprimitive} follows from Theorem \ref{thm:main_abelian_on_top}.

	We will then prove Theorem \ref{thm:main_abelian_on_top} in Section \ref{sec:abelian_on_top}, building on work of Alberts--O'Dorney \cite{alberts-odorney2021} to check the hypotheses of Theorem \ref{thm:main_pointwise}.

	Section \ref{sec:examples} contains a list of examples of groups for which we prove Conjecture \ref{conj:number_field_counting}, including proofs of the corollaries listed in the introduction. This section can be read independent of the other sections in this paper, so that interested readers can jump straight in to applying our results to check cases of Conjecture \ref{conj:number_field_counting}. In addition to the corollaries listed in the introduction, we include a subsection describing the \verb^Magma^ code used to produce Corollary \ref{cor:compute} and a subsection summarizing the current best known results towards Conjecture \ref{conj:number_field_counting} for all transitive groups of degree $6$.

	Sections \ref{sec:conditional_examples} and \ref{sec:cute} present evidence that our method has the potential to apply to concentrated groups in greater generality. Section \ref{sec:conditional_examples} discusses what one needs to know about the concentrated group $G$ to apply the methods of this paper, and in particular relates these ingredients to existing conjectures in arithmetic statistics. Meanwhile, Section \ref{sec:cute} goes in a different direction. We use our method to prove Theorem \ref{thm:cute}, which states that for any group concentrated in an abelian normal subgroup, there exists some admissible ordering $\inv$ for which our methods gives the asymptotic growth rate of $\#\Sur_{\inv}(G_k,G;X)$. In particular, this shows that every solvable group has an ordering for which we can give the corresponding asymptotic growth rate.

\subsection{Notation} 
	The following is a list of conventions and notations we take throughout.
		
	\allowdisplaybreaks
	\begin{align*}
	\text{``a }&\text{permutation group of degree $m$'' implies that $m$ is finite}\\
	k & \text{ will always denote a number field, and is the base field for our extensions.}\\
	G_F &= \text{ the absolute Galois group of a field }F\\
	\widetilde{L} &=\text{the Galois closure of a field }L\text{ over the base field }k\\
	G & \text{ will always denote a transitive permutation group,}\\
	 &\text{and will be of degree $n$ unless otherwise specified}\\
	\text{Permutation }&\text{groups are \emph{isomorphic} if they are of the same degree $m$ and}
	 \\
	 \text{are }&\text{conjugate as subgroups of $S_m$}
	 \\
	\Stab_G(1)  & \text{ the subgroup of $G$ fixing $1$} \\
	G \wr H &=\text{the wreath product in the wreath representation, i.e.}\\
		&\phantom{=}\text{ with stabilizer }(\Stab_G(1) \times G^{m-1}) \times \Stab_H(1)\text{ when $H$ has degree $m$}\\
	\disc(F/k) &\text{ the relative discriminant ideal}\\
	|\mathfrak{a}| &=\text{ the norm of the ideal }\mathfrak{a}\text{ down to }\Q\\
	\mathcal{F}_{n,k}(G;X) &=\{L/k : [L:k] = n,\ \Gal(\widetilde{L}/k)\cong G,\ |\disc(L/k)|\le X\}\\
	\disc(\pi) &\text{ the relative discriminant of the field fixed by }\pi^{-1}(\Stab_G(1))\\
	\disc_G(\pi) &\text{ same as above, when $G$ needs to be specified}\\
	\Sur(G_k,G) &= \{\pi:G_K\to G\text{ surjective continuous homomorphism}\}\\
	\Sur(G_k,G;X) &= \{\pi\in \Sur(G_K,G) : |\disc_G(\pi)|\le X\}\\
	\ind(g) &= n - \#\{\text{orbits of }g\}\text{ for }g\in S_n\\
	\ind_n(g) &\text{ same as above, when $n$ needs to be specified}\\
	\ind_G(g) &\text{ same as above, when $G$ needs to be specified}\\
	a(U)&= \min_{g\in U-\{1\}}\ind(g)\text{ for any subset }U\subseteq S_n\\
		&\hphantom{=} a(G) \text{ is Malle's predicted value for }a\text{ in Conjecture \ref{conj:number_field_counting}}\\
		&\hphantom{=} a(T) \text{ is Alberts's predicted value for }a\text{ in Conjecture \ref{conj:twisted_number_field_counting}}\\
	\chi:G_k\to \hat{\Z}^{\times}&\text{ is the cyclotomic character, given by }\Gal(k\Q^{\rm ab}/k)\subseteq \hat{\Z}^{\times}\\
	b(k,G)&= \text{the number of orbits of conjugacy classes in }G\\
		&\hphantom{=}\text{ with respect to the Galois action }x.g=g^{\chi(x)}\\
		&\hphantom{=}b(k,G)\text{ is Malle's predicted value for }b\text{ in Conjecture \ref{conj:number_field_counting}}\\
	c(k,G)&=\text{the value for }c\text{ in Conjecture \ref{conj:number_field_counting} whenever it is known to hold}\\
	T(\pi)&=\text{the subgroup } T\normal G\text{ with Galois action }x.t=\pi(x)t\pi(x)^{-1}\\
		&\hphantom{=}\text{induced by }\pi:G_k\to G.\\
	T(\pi)^*&=\Hom(T(\pi),\mu)\text{ as a Galois module, }\mu\text{ is the group of roots of unity}\\
	&\phantom{=}\text{called the Tate dual of }T(\pi)\\
	q_*&=\text{ the pushforward along the quotient }q:G\to G/T\\
	q_*\Sur(G_k,G) &= \{\pi\in \Sur(G_k,G/T) : \pi = q\circ \widetilde{\pi}\text{ for some }\widetilde{\pi}\in\Sur(G_k,G)\}\\
	q_*\Sur(G_k,G;X) &= \{\pi\in \Sur(G_k,G/T) : \pi = q\circ \widetilde{\pi}\text{ for some }\widetilde{\pi}\in\Sur(G_k,G;X)\}\\
	q_*\disc &=\text{ the pushforward discriminant, see \eqref{eq:pushforward_discriminant}}\\
	\Sur_{\inv}(G_k,G;X) &= \{\pi\in \Sur(G_k,G) : |\inv(\pi)|\le X\}\\
	b(k,T(\pi))&= \text{the number of orbits of conjugacy classes in }T\\
		&\hphantom{=}\text{ with respect to the Galois action }x.t=\pi(x)t^{\chi(x)^{-1}}\pi(x)^{-1}\\
	c(k,T(\pi))&=\text{the value for }c\text{ in Conjecture \ref{conj:twisted_number_field_counting} whenever it is known to hold}
	\end{align*}

	Below are the conventions we use for asymptotic notation. Any implied constants are always allowed to depend on $k$ and $G$, unless otherwise specified.
	\begin{align*}
	f(X) \sim g(X) & \text{ asymptotic, i.e. }\lim_{X\to\infty} f(X)/g(X) = 1\\
	f(X) \ll g(X) & \text{ there exists a constant }C\text{ s.t. }|f(X)| \le Cg(X)\\
	f(X) \ll_P g(X) & \text{ same as above, where }C\text{ depends only on the parameters }P\\
	f(X) = O(g(X)) & \text{ same as }f(X) \ll g(X)\\
	f(X) = O_P(g(X)) & \text{ same as }f(X) \ll_P g(X)\\
	f(X) \asymp g(X) & \text{ same as }g(X) \ll f(X) \ll g(X)\\
	f(X) \asymp_P g(X) & \text{ same as }g(X) \ll_P f(X) \ll_P g(X)
	\end{align*}

	\allowdisplaybreaks[0]

\section*{Acknowledgements}
    Alberts was partially supported by an AMS-Simons travel grant, Lemke Oliver was supported by NSF grant DMS-2200760, Wang was partially supported by a Foerster-Bernstein Fellowship at Duke University and NSF grant DMS-2201346, and Wood was partially supported by a Packard Fellowship for Science and Engineering, an NSF Waterman award DMS-2140043, a MacArthur Fellowship, and the Radcliffe Institute for Advanced Study at Harvard University.

    The authors would like to thank Alexander Smith for helpful conversations and references. The authors would also like to thank Evan O'Dorney, Daniel Loughran, and Frank Thorne for helpful feedback.

\section{The Inductive Framework}\label{sec:analysis}

	In this section, we provide the general analytic framework that we use to piece the fiber-wise counts back together.  This is provided by the following theorem

	\begin{theorem}\label{thm:main_pointwise}
		Let $G$ be finite permutation group with a normal subgroup $T \normal G$, and let $q\colon G \to G/T$ be the quotient map.  Let $k$ be a number field, and for any $\pi \in \Sur(G_k,G/T)$, let $q_*^{-1}(\pi) \subseteq \Sur(G_k,G)$ be the fiber over $\pi$.

		Assume there exist real numbers $a>0$ and $b \geq 1$ such that the following three conditions are satisfied:
			\begin{enumerate}
				\item (``Precise counting of the fibers'') For each $\pi \in \Sur(G_k, G/T)$, there is some constant $c(\pi) \geq 0$ so that
					\[
						\#\{ \psi \in q_*^{-1}(\pi) : |\disc(\psi)| \leq X\}
							= (c(\pi) + o(1)) X^{1/a} (\log X)^{b-1}
					\]
				as $X \to \infty$.

				\item (``Uniform upper bounds on the fibers'') For each $\pi \in \Sur(G_k, G/T)$, there is a constant $f(\pi) \geq 0$ so that for every $X \geq 2$, we have
					\[
						 \#\{ \psi \in q_*^{-1}(\pi) : |\disc(\psi)| \leq X\}
							\leq f(\pi) X^{1/a} (\log X)^{b-1}.
					\]

				\item (``Criterion for convergence'') The series 
					\[
						\sum_{\pi \in \Sur(G_k,G/T)} f(\pi)
					\]
				converges, where $f(\pi)$ is as above.
			\end{enumerate}
		Then 
			\[
				\#\Sur(G_k,G;X)
					= (c + o(1) ) X^{1/a} (\log X)^{b-1},
			\]
		where $c$ is given by the convergent series
			$
				c := \sum_{\pi \in \Sur(G_k,G/T)} c(\pi).
			$
	\end{theorem}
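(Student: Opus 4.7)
The plan is to recognize Theorem~\ref{thm:main_pointwise} as a packaged dominated-convergence statement for series. First, I would note that any $\psi \in \Sur(G_k, G)$ has $q\circ\psi \in \Sur(G_k, G/T)$ (composition of surjections is surjective), so the pushforward $q_*$ partitions the counted set as
\[
\Sur(G_k, G; X) = \bigsqcup_{\pi \in \Sur(G_k, G/T)} \{\psi \in q_*^{-1}(\pi) : |\disc(\psi)| \le X\}.
\]
Write $N_\pi(X)$ for the size of the fiber at level $X$. After dividing by $X^{1/a}(\log X)^{b-1}$, the theorem reduces to justifying
\[
\lim_{X \to \infty} \sum_{\pi \in \Sur(G_k, G/T)} \frac{N_\pi(X)}{X^{1/a}(\log X)^{b-1}} = \sum_{\pi \in \Sur(G_k, G/T)} c(\pi).
\]
By hypothesis (2) each summand is dominated by $f(\pi)$ uniformly in $X$; by hypothesis (3) the $f(\pi)$ are summable; by hypothesis (1) each summand converges pointwise to $c(\pi)$ as $X \to \infty$. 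This is exactly the setup for Tannery's theorem (the discrete analogue of Lebesgue dominated convergence), so the interchange of limit and sum is legitimate.

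To make this concrete without appealing to Tannery's theorem by name, I would fix $\epsilon > 0$ and use condition (3) to choose a finite set $S \subseteq \Sur(G_k, G/T)$ with $\sum_{\pi \notin S} f(\pi) < \epsilon$. For $\pi \notin S$, hypothesis (2) bounds the tail contribution to $\#\Sur(G_k, G; X)$ by $\epsilon \cdot X^{1/a}(\log X)^{b-1}$ uniformly in $X$. For the finite head, hypothesis (1) applied to each $\pi \in S$ yields
\[
\sum_{\pi \in S} N_\pi(X) = \Bigl(\sum_{\pi \in S} c(\pi) + o(1)\Bigr) X^{1/a}(\log X)^{b-1}.
\]
Combining these estimates shows that both the $\limsup$ and the $\liminf$ of $\#\Sur(G_k, G; X) / (X^{1/a}(\log X)^{b-1})$ lie within $\epsilon$ of $\sum_{\pi \in S} c(\pi)$. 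Before sending $\epsilon \to 0$, one short check is needed to confirm that the limiting series $c := \sum_\pi c(\pi)$ converges at all: dividing the bound in hypothesis (2) by $X^{1/a}(\log X)^{b-1}$ and letting $X \to \infty$ gives $c(\pi) \le f(\pi)$, so $\sum c(\pi)$ converges by comparison with $\sum f(\pi)$. Choosing $S$ large enough that $\sum_{\pi \in S} c(\pi)$ is within $\epsilon$ of $c$, and then letting $\epsilon \to 0$, yields the stated asymptotic.

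I do not anticipate any serious obstacle inside the proof of this theorem itself; the content of Theorem~\ref{thm:main_pointwise} lies in its clean formulation rather than in its execution, and the real analytic difficulty is deferred to the later sections of the paper, where the three hypotheses must be verified for specific $G$, $T$, and $k$ using the twisted counting results of Alberts--O'Dorney together with bounds on $|H^1_{ur}(k, T(\pi))|$. The only subtle point internal to the proof is the comparison $c(\pi) \le f(\pi)$, which simultaneously legitimizes the definition of $c$ and ensures that the tail of the normalized count is genuinely controlled.
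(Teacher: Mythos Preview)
Your proposal is correct and takes essentially the same approach as the paper: the paper also writes the fiber decomposition, fixes an arbitrary $Y$ (your $1/\epsilon$), chooses a finite set $\Pi$ (your $S$) with small $f$-tail, handles the head via hypothesis (1) and the tail via hypothesis (2), and explicitly notes $c(\pi)\le f(\pi)$ to get convergence of $\sum c(\pi)$. Your framing of the argument as Tannery's theorem / discrete dominated convergence is apt and not stated in the paper, but the execution is the same.
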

	\begin{remark} \rm
		Note that we have allowed $c(\pi)$ and $f(\pi)$ to be $0$.  This is convenient for two reasons.  First, there may be $\pi \in \Sur(G_k, G/T)$ for which the fiber $q_*^{-1}(\pi)$ is empty, in which case we may take $f(\pi) = 0$.  We may equivalently restrict our attention in (1) and (2) to those $\pi$ in the subset $q_* \Sur(G_k, G) \subseteq \Sur(G_k, G/T)$, which we will often do in what follows.
		
		Second, by allowing $c(\pi) = 0$, we are not demanding that every fiber, or indeed that \emph{any} fiber, has positive density.  If every $c(\pi)=0$, then the conclusion is that $\#\Sur(G_k,G;X) = o(X^{1/a}(\log X)^{b-1})$, which is not an asymptotic formula but is a potentially nontrivial upper bound.
		This means in particular that Theorem~\ref{thm:main_pointwise} can still meaningfully apply even to non-concentrated groups.
	\end{remark}

	\begin{proof}
		Since the fibers $q_*^{-1}(\pi)$ for distinct $\pi \in \Sur(G_k,G/T)$ are disjoint, it follows that for any $X \geq 1$, we may write
			\begin{equation} \label{eqn:fiber-wise-decomposition}
				\#\Sur(G_k,G;X)
					= \sum_{\pi \in \Sur(G_k,G/T)} \#\{\psi \in q_*^{-1}(\pi) : |\disc(\psi)| \leq X\}.
			\end{equation}
		Now, let $Y \geq 1$ be arbitrary.  It follows from our assumptions that there exists a finite subset $\Pi \subset \Sur(G_k,G/T)$ and some $X_0 \geq 2$, both depending on $Y$, such that
			\begin{equation} \label{eqn:tail-of-series}
				\sum_{\substack{ \pi \in \Sur(G_k,G/T) \\ \pi \not\in \Pi}} f(\pi) 
					< \frac{1}{4Y}
			\end{equation}
		and, for every $\pi \in \Pi$ and every $X \geq X_0$, we have
			\[
				\left| \#\{ \psi \in q_*^{-1}(\pi) : |\disc(\psi)| \leq X\} - c(\pi) X^{1/a} (\log X)^{b-1}\right|
					< \frac{X^{1/a} (\log X)^{b-1}}{2Y}.
			\]
		Indeed, such a set $\Pi$ exists by the criterion for convergence, and such an $X_0$ exists by the precise counting of fibers for the finitely many $\pi \in \Pi$.

		Inserting this into \eqref{eqn:fiber-wise-decomposition} and appealing to the uniform upper bound on fibers, we readily find for any $X \geq X_0$ that
			\[
				\left| \#\Sur(G_k,G;X) - \sum_{\pi \in \Pi} c(\pi) X^{1/a} (\log X)^{b-1} \right| 
					< \frac{3X^{1/a} (\log X)^{b-1}}{4Y}.
			\]
		We next observe for any $\pi \in \Sur(G_k,G/T)$ that we evidently must have $c(\pi) \leq f(\pi)$.  From this, we deduce both that the series
			\[
				\sum_{\pi \in \Sur(G_k,G/T)} c(\pi)
			\]
		converges as a consequence of the criterion for convergence, and that
			\[
				\sum_{\substack{ \pi \in \Sur(G_k,G/T) \\ \pi \not\in \Pi}} c(\pi)
					< \frac{1}{4Y}
			\]
		on comparison with \eqref{eqn:tail-of-series}.  Pulling everything together, we find for every $X \geq X_0$,
			\[
				\left| \#\Sur(G_k,G;X) - \sum_{\pi \in \Sur(G_k,G/T)} c(\pi) X^{1/a} (\log X)^{b-1} \right|
					< \frac{ X^{1/a} (\log X)^{b-1} }{Y}.
			\]
		The result follows on taking $Y \to \infty$.
	\end{proof}

	Theorem~\ref{thm:main_pointwise} is stated as a result counting elements of $\Sur(G_k,G)$, not number fields.  Even though these problems are equivalent, this choice is deliberate and justified by the way we treat groups concentrated in an abelian normal subgroup later in the paper.  However, we close this section with a lemma that makes precise the translation between these two perspectives so that Theorem~\ref{thm:main_pointwise} may still be properly regarded as a number field counting result.

	\begin{lemma}\label{lem:sur-to-fields}
		Let $G$ be a transitive permutation group of degree $n$ and let $k$ be a number field.  Given an element $\pi \in \Sur(G_k, G)$, we may associate to $\pi$ the subfield $F$ of $\overline{k}$ fixed by $\pi^{-1}(\mathrm{Stab}_G 1)$. This subfield is a degree $n$ extension of $k$ with Galois closure group $G$, and $\disc(F/k) = \disc_G \pi$.

		Conversely, given such a field $F$, there are exactly $[N_{S_n}(G) \cap N_{S_n}(\Stab_G 1) : C_{S_n}(G) \cap N_{S_n}(\Stab_G 1)]$ elements of $\Sur(G_k,G)$ giving rise to $F$ in this manner, where for any subgroup $H \leq S_n$, we let $N_{S_n}(H)$ and $C_{S_n}(H)$ denote the normalizer and centralizer subgroups of $H$ in $S_n$.
	\end{lemma}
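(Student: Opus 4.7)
The plan is to derive the first assertion directly from the Galois correspondence, and then to pin down the counting statement by analyzing the ambiguity in writing $\pi \in \Sur(G_k,G)$ as the composition of a quotient and an isomorphism. For the first assertion, set $N := \pi^{-1}(\Stab_G 1)$ and observe that surjectivity of $\pi$ gives $[G_k : N] = [G : \Stab_G 1] = n$, so $F = \overline{k}^N$ is a degree $n$ extension of $k$. The Galois closure of $F$ corresponds to $\bigcap_{\sigma\in G_k}\sigma N\sigma^{-1}$, which by surjectivity of $\pi$ equals $\pi^{-1}$ of the core of $\Stab_G 1$ in $G$; this core is trivial because $G$ acts faithfully on $\{1,\dots,n\}$, so the Galois closure corresponds to $\ker\pi$. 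Thus $\pi$ induces an isomorphism $\Gal(\widetilde F/k)\cong G$ that identifies the action on the embeddings $F\hookrightarrow\widetilde F$ with the action of $G$ on $G/\Stab_G 1 \cong \{1,\dots,n\}$, showing that the Galois closure group of $F$ equals $G$ as a permutation group, while $\disc(F/k)=\disc_G\pi$ is immediate from the definition of $\disc_G$.

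For the counting statement, fix such a field $F$. Any surjection $\pi'$ giving rise to $F$ must satisfy $\ker\pi'=G_{\widetilde F}$ (by the trivial-core argument above), so $\pi'$ factors uniquely as $\pi'=\phi\circ\rho$, where $\rho\colon G_k\twoheadrightarrow \Gal(\widetilde F/k)$ is the canonical quotient and $\phi$ is an abstract group isomorphism; the requirement that the fixed field of $(\pi')^{-1}(\Stab_G 1)$ equal $F$ translates into $\phi(\Gal(\widetilde F/F))=\Stab_G 1$. Because $F$ is a $G$-extension in the permutation-group sense, at least one such $\phi_0$ exists (a priori one obtains an isomorphism sending $\Gal(\widetilde F/F)$ to $\Stab_G j$ for some $j$, and one post-composes with an inner automorphism of $G$ to arrange $j=1$). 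The set of all admissible $\phi$ is then precisely $\{\alpha\circ\phi_0 : \alpha\in \Aut(G),\ \alpha(\Stab_G 1)=\Stab_G 1\}$, and hence the number of $\pi'$ giving rise to $F$ equals $|\Aut(G,\Stab_G 1)|$, the order of the subgroup of $\Aut(G)$ preserving the point stabilizer.

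The last step is to identify this order with the claimed index by studying the conjugation homomorphism $N_{S_n}(G)\cap N_{S_n}(\Stab_G 1) \to \Aut(G,\Stab_G 1)$, whose kernel is exactly $C_{S_n}(G)\cap N_{S_n}(\Stab_G 1)$. The main step is to show this map is surjective. Given $\alpha\in\Aut(G,\Stab_G 1)$, the formula $\sigma(g\cdot 1):=\alpha(g)\cdot 1$ is well-defined on $\{1,\dots,n\}$ precisely because $\alpha(\Stab_G 1)=\Stab_G 1$, and a direct computation shows that the resulting $\sigma\in S_n$ realizes $\alpha$ by conjugation on $G$, fixes $1$, and therefore lies in $N_{S_n}(G)\cap N_{S_n}(\Stab_G 1)$. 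This explicit construction of $\sigma$ from $\alpha$ is the main technical content of the lemma — it is what forces ``abstract'' automorphisms preserving the stabilizer to coincide with permutation-induced ones — while the remainder is bookkeeping via the Galois correspondence.
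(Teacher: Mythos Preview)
Your proof is correct and follows essentially the same approach as the paper's: both arguments identify the fiber over $F$ with the set of automorphisms of $G$ preserving $\Stab_G 1$, and then realize this set as the quotient $[N_{S_n}(G)\cap N_{S_n}(\Stab_G 1):C_{S_n}(G)\cap N_{S_n}(\Stab_G 1)]$ via the conjugation map. Your treatment is slightly more explicit than the paper's---in particular, you write down the permutation $\sigma(g\cdot 1)=\alpha(g)\cdot 1$ witnessing surjectivity, whereas the paper simply asserts that an automorphism preserving $\Stab_G 1$ permutes its cosets and hence is induced by an element of $S_n$---but the substance is the same.
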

	\begin{proof}
		For any transitive permutation group $G$ of degree $n$, the subgroup $\Stab_G 1$ has index $n$, the conjugates of $\Stab_G 1$ are $\Stab_G 1, \dots, \Stab_G n$, whose total intersection is trivial, and the action of $G$ on the cosets of $\Stab_G 1$ is permutation isomorphic to $G$.  This implies the first claim.

		For the second, observe that if $\pi$ and $\pi^\prime$ are both associated with $F$, then there must be some $\phi \in \mathrm{Aut}(G)$ with $\phi(\Stab_G 1) = \Stab_G 1$ such that $\pi^\prime = \phi \circ \pi$.  This process may be reversed, so we may equivalently count such automorphisms $\phi$.  Any such automorphism $\phi$ must permute the cosets of $\Stab_G 1$, so arises via conjugation from an element of $S_n$.  In fact, because $G$ is a permutation group, this conjugation must be from an element of the normalizer $N_{S_n}(G)$.
		Moreover, since we must have that $\phi(\Stab_G 1) = \Stab_G 1$, it must arise from the intersection $N_{S_n}(G) \cap N_{S_n}(\Stab_G 1)$.  The kernel of the restriction to this subgroup is $C_{S_n}(G) \cap N_{S_n}(\Stab_G 1)$, and the lemma follows.
	\end{proof}

\section{Wreath Products by $S_3$}\label{sec:S3_wreath}
	In this section, we prove Theorem \ref{thm:main_S3_wreath} for groups of the form $G = S_3\wr B = (S_3^m)\rtimes B$ where $B\subset S_m$ is a transitive permutation group of degree $m$.  We begin by fixing a useful convention.  As both $G$ and $B$ are permutation groups, we may assume that the labels of the element `$1$' in $\{1,\dots,m\}$ and $\{1,\dots,3m\}$ are compatible in the sense that $\Stab_G 1 \leq q^{-1}(\Stab_B 1)$, where $q\colon G \to G/T \cong B$ is the quotient map composed with a fixed isomorphism $G/T \to B$, where $T = S_3^m$.  In this context, the minimal index elements of $G$ are the transpositions in $G$, so $G$ is concentrated in $T$.  We begin by establishing the precise counting of fibers required by Theorem~\ref{thm:main_pointwise}, that is, we establish Conjecture~\ref{conj:twisted_number_field_counting} for such groups.

	\begin{theorem}\label{thm:twisted_for_S3r_wreath}
		Let $B$ be a permutation group of degree $m$ and let $G = S_3\wr B$.  Let $q\colon G\to G/T \cong B$ be the quotient map, where $T = S_3^m$, and assume that $\Stab_G 1 \leq q^{-1}(\Stab_B 1)$.  For each $\pi\in \Sur(G_k,B)$, there exists a positive constant $c(\pi) > 0$ such that
		\[
			\#\{ \psi \in q_*^{-1}(\pi) : |\disc_G(\psi)| \leq X\} \sim c(\pi)X.
		\]
	\end{theorem}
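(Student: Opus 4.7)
The strategy is to parametrize each fiber $q_*^{-1}(\pi)$ by $S_3$-extensions of a specific intermediate subfield, and then invoke the Davenport--Heilbronn / Datskovsky--Wright asymptotic for cubic fields. Let $F := \bar{k}^{\pi^{-1}(\Stab_B 1)}$ denote the non-Galois degree-$m$ subfield of the $B$-Galois extension cut out by $\pi$. The compatibility assumption $\Stab_G 1 \le q^{-1}(\Stab_B 1)$ ensures that for any $\psi \in q_*^{-1}(\pi)$ the associated degree-$3m$ field $L := \bar{k}^{\psi^{-1}(\Stab_G 1)}$ is a cubic extension of $F$. By Alberts's crossed-homomorphism correspondence \cite[Lemma~1.3]{alberts2021}, elements of $q_*^{-1}(\pi)$ correspond to $1$-cocycles $c\colon G_k \to T = S_3^m$ with respect to the $\pi$-twisted action on $T$. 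Since $T = \mathrm{Ind}_{G_F}^{G_k}(S_3)$ (with $S_3$ sitting as the first-coordinate factor), the non-abelian form of Shapiro's lemma identifies such cocycles bijectively with homomorphisms $h\colon G_F \to S_3$ via $h := c_1|_{G_F}$; under this bijection, the cubic $L/F$ attached to $\psi$ coincides with the one cut out by $h$.

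The tower formula for discriminants then gives
\[
|\disc_G(\psi)| = |\disc(L/F)| \cdot |\disc(F/k)|^3,
\]
so, modulo a thin degenerate subset addressed below, the fiber count reduces asymptotically to $\#\Sur(G_F, S_3; Y)$ with $Y := X/|\disc(F/k)|^3$. The Datskovsky--Wright asymptotic \cite{DW88} for cubic fields over $F$, together with the factor-of-$2$ conversion supplied by Lemma~\ref{lem:sur-to-fields}, then yields
\[
\#\Sur(G_F, S_3; Y) \sim 2\, c'(F)\, Y
\]
for some explicit $c'(F) > 0$. Hence $c(\pi) = 2 c'(F)/|\disc(F/k)|^3 > 0$, which gives the desired asymptotic $\#\{\psi \in q_*^{-1}(\pi) : |\disc_G(\psi)| \le X\} \sim c(\pi) X$.

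The principal technical point is imposing full surjectivity of $\psi$ onto $G$, not merely surjectivity of $h$ onto $S_3$. Unwinding the bijection, $\psi$ is surjective iff $h$ is surjective and the Galois $B$-translates of the cubic $L$ are linearly disjoint over the $B$-Galois closure of $F$; equivalently, $L/F$ does not descend to any proper intermediate field $F' \subsetneq F$ containing $k$. I would bound the degenerate locus by summing $\#\mathcal{F}_{3,F'}(S_3; O(Y^{1/[F:F']}))$ over the finitely many such $F'$; by another application of Datskovsky--Wright each contributes $O(Y^{1/2})$, so the total degenerate contribution is $o(Y)$ and is absorbed into the error term. Verifying this thin-subset bound is the main obstacle of the proof.
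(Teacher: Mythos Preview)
Your overall strategy---parametrize the fiber by cubic $S_3$-extensions of the degree-$m$ subfield $F$ and apply Datskovsky--Wright---is precisely what the paper does, and the discriminant identity you use is correct. The gap is in your treatment of the surjectivity condition.

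The claimed equivalence between ``the $B$-translates of $L$ are linearly disjoint over $\widetilde F$'' and ``$L/F$ does not descend to a proper intermediate field $k\subseteq F'\subsetneq F$'' is false. Already for $B=C_2$ (so $\widetilde F=F$ is quadratic over $k$, with nontrivial automorphism $\sigma$): choose an $S_3$-cubic $L/F$ whose quadratic resolvent $M/F$ satisfies $M=M^{\sigma}$ but with $\widetilde L\ne\widetilde L^{\sigma}$. Then $\widetilde L\cap\widetilde L^{\sigma}\supseteq M\supsetneq F$, so by Goursat the image of $(h,h^{\sigma})\colon G_F\to S_3\times S_3$ lies in the index-$2$ subgroup $\{(g_1,g_2):\operatorname{sgn}(g_1)=\operatorname{sgn}(g_2)\}$ and $\psi$ is not surjective; yet $L$ does not descend to $k$, since descent would force $L=L^{\sigma}$ and hence $\widetilde L=\widetilde L^{\sigma}$. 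Your bound $\sum_{F'}\#\mathcal F_{3,F'}(S_3;O(Y^{1/[F:F']}))$ therefore misses these fields entirely. For general $B$ the degenerate locus is stratified by the $B$-stable proper subgroups of $S_3^m$ that surject onto each coordinate, and many of these strata (shared resolvents among subsets of translates, shared closures, etc.) are not captured by descent to subfields of $F$. Bounding each stratum separately is possible but is not the short argument you sketch.

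The paper avoids this classification with a local argument. If $\mathfrak p$ is a prime of $k$ splitting completely in $F$ as $\mathfrak P_1\cdots\mathfrak P_m$, and $L\otimes_F F_{\mathfrak P_i}$ is $F_{\mathfrak P_i}\oplus(\text{quadratic})$ at exactly one index $i$ and totally split at the remaining $\mathfrak P_j$, then the decomposition group at $\mathfrak p$ produces a transposition supported in a single $S_3$-factor; together with the image $B$ this generates all of $G$, so $\Gal(\widetilde L/k)\cong G$. Consequently any $L$ with $\Gal(\widetilde L/k)\not\cong G$ fails this local test at \emph{every} completely split prime of $k$. Applying Datskovsky--Wright with local conditions at a finite set $S$ of such primes shows these $L$ have density at most $\prod_{\mathfrak p\in S}\delta_{\mathfrak p}$ with $\delta_{\mathfrak p}$ bounded uniformly below $1$; letting $|S|\to\infty$ gives density zero. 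This handles the entire degenerate locus in one stroke, with no subgroup enumeration.
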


	\begin{proof}
		Let $\pi\in \Sur(G_k,B)$ be fixed, and let $F/k$ be the subfield of $\overline{k}$ fixed by $\pi^{-1}(\Stab_B 1)$.  Similarly, given $\psi \in q_*^{-1}(\pi)$, let $E/k$ be the subfield of $\overline{k}$ fixed by $\psi^{-1}(\Stab_G 1)$.  By our assumption that $\Stab_G 1 \leq q^{-1}( \Stab_B 1)$, $E$ is a cubic extension of $F$, necessarily with Galois closure group $S_3$ over $F$.  Moreover, by the conductor-discriminant formula, we have that $\disc(E/k) = \mathrm{Nm}_{F/k} \disc(E/F) \cdot \disc_B(\pi)^3$, so $E \in \mathcal{F}_{3,F}(S_3; x)$, where we have set $x := X / |\disc_B(\pi)|^3$ for convenience.
		
		We next observe that $\Aut(G)$ acts transitively and freely on the set $\{ \psi \in \Sur(G_k, G) : \ker \psi = G_{\widetilde{E}}\}$, where $G_{\widetilde{E}}$ is the absolute Galois group of the Galois closure $\widetilde{E}$ of $E$ over $k$.  Appealing to Lemma~\ref{lem:sur-to-fields}, we therefore find that
			\[
				\#\{ \psi \in q_*^{-1}(\pi) : |\disc_G(\psi)| \leq X\}
					= c_G \cdot \#\{ E \in \mathcal{F}_{3,F}(S_3;x) : \Gal(\widetilde{E}/k) \cong G\},
			\]
		where
			\begin{equation} \label{eqn:c_G-def}
				c_G := \frac{[N_{S_{3m}}(G) \cap N_{S_{3m}}(\Stab_G 1) : C_{S_{3m}}(G) \cap N_{S_{3m}}(\Stab_G 1 ) ]}{[N_{S_m}(B) \cap N_{S_{m}}(\Stab_B 1) : C_{S_{m}}(B) \cap N_{S_{m}}(\Stab_B 1)]}.
			\end{equation}
		It follows from work of Datskovsky and Wright \cite{DW88} that there is some constant $c_F > 0$ such that
			\[
				\#\mathcal{F}_{3,F}(S_3;x)
					\sim c_F x = c_F \frac{X}{|\disc_B(\pi)|^3}
			\]
		as $X \to \infty$.  We claim that the same asymptotic holds for the subset of $\mathcal{F}_{3,F}(S_3;X)$ whose Galois closure over $k$ has Galois group $G$, so that the theorem holds with $c(\pi) = c_G c_F / |\disc_B(\pi)|^3$.

		To prove this claim, we exploit the fact that Datskovsky and Wright also prove an asymptotic for the number of fields $E \in \mathcal{F}_{3,F}(S_3;x)$ subject to finitely many local conditions.  Let $\fp$ be a prime of $k$ that splits completely in $F$, say as $\mathfrak{P}_1 \dots \mathfrak{P}_m$.  If $E \in \mathcal{F}_{3,F}(S_3;x)$ is such that there is some $1 \leq i \leq m$ for which the \'etale algebra $E \otimes_F F_{\mathfrak{P}_i}$ is the direct sum of $F_{\mathfrak{P}_i}$ with a quadratic extension, and so that $E \otimes_F F_{\mathfrak{P}_j}$ is totally split for each $j \ne i$, then we must have that $\Gal(\widetilde{E}/k) \cong G$.  In particular, any $E \in \mathcal{F}_{3,F}(S_3;x)$ whose Galois closure group is not $G$ cannot satisfy this local condition at any prime that splits completely in $F$.

		Let $S$ be a finite set of primes $\fp$ of $k$ that split completely in $F$.  For each $\fp \in S$, let $\Sigma_\fp$ be the subset of $\mathcal{F}_{3,F}(S_3;\infty)$ consisting of those $E$ that do not satisfy the local condition described above.  By \cite{DW88}, there are positive constants $\delta_\fp$, bounded uniformly away from $1$ for $\fp$ sufficiently large, so that
			\[
				\#\{ E \in \mathcal{F}_{3,F}(S_3;x) : E \in \Sigma_\fp \text{ for all } \fp \in S\}
					\sim c_F \cdot \prod_{\fp \in S} \delta_\fp \cdot x
			\]
		as $X \to \infty$.  From the discussion above, the set on the left-hand side contains all extensions $E \in \mathcal{F}_{3,F}(S_3;x)$ whose Galois closure group is not $G$, and the right-hand side may be made arbitrarily small by choosing $S$ sufficiently large.  This gives the claim and the theorem.
	\end{proof}

	Next, we need a uniform upper bound on the sizes of the fibers.  This is essentially provided by a result of Lemke Oliver, Wang, and Wood.
	\begin{lemma}\label{lem:LOWW_uniform}
		Let $G = S_3\wr B$ and $T=S_3^m$ with quotient map $q\colon G\to G/T \cong B$. Assume that $\Stab_G 1 \leq q^{-1}(\Stab_B 1)$.  For each $\pi\in \Sur(G_k,B)$, every $X \geq 1$, and every $\epsilon > 0$, we have
			\[
				\#\{ \psi \in q_*^{-1}(\pi) : |\disc_G \psi| \leq X\}
					= O_{[k:\mathbb{Q}],m,\epsilon}\left (\frac{|\disc(k)|^{m+\epsilon} |\Cl_F[2]|^{2/3}}{|\disc_B (\pi)|^{2-\epsilon}} X\right)
			\]
		where $F$ is the subfield of $\overline{k}$ fixed by $\pi^{-1}(\Stab_B 1)$.
	\end{lemma}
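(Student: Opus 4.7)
The plan is to reduce the fiber count to a uniform upper bound for cubic $S_3$-extensions of $F$ and then invoke the Lemke Oliver--Wang--Wood (LOWW) theorem cited as \cite{LOWW}.

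First I would repeat the first two paragraphs of the proof of Theorem~\ref{thm:twisted_for_S3r_wreath} almost verbatim. Under the assumption $\Stab_G 1 \le q^{-1}(\Stab_B 1)$, each $\psi \in q_*^{-1}(\pi)$ corresponds (via Lemma~\ref{lem:sur-to-fields} together with the automorphism bijection used in the proof of Theorem~\ref{thm:twisted_for_S3r_wreath}) to a cubic extension $E/F$ with $\Gal(\widetilde{E}/F) \cong S_3$, up to the constant factor $c_G$ from \eqref{eqn:c_G-def}, which depends only on $G$ (hence only on $m$). The conductor-discriminant formula gives $\disc(E/k) = N_{F/k}\disc(E/F) \cdot \disc_B(\pi)^3$, so $|\disc_G \psi| \le X$ translates to $E \in \mathcal{F}_{3,F}(S_3; x)$ with $x := X / |\disc_B(\pi)|^3$. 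Dropping the extra Galois-closure condition $\Gal(\widetilde{E}/k) \cong G$ only increases the count, so
\[
\#\{ \psi \in q_*^{-1}(\pi) : |\disc_G \psi| \le X\}
\le c_G \cdot \#\mathcal{F}_{3,F}(S_3; x).
\]

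Next I would apply the uniform upper bound for cubic $S_3$-extensions from \cite{LOWW}, which gives a bound of the shape
\[
\#\mathcal{F}_{3,F}(S_3; x) \ll_{[F:\Q],\epsilon} |\Cl_F[2]|^{2/3} \cdot |\disc(F/\Q)|^{1+\epsilon} \cdot x.
\]
Since $F/k$ has degree $m$ and $k$ is fixed, $[F:\Q] = m[k:\Q]$, so the dependence on $[F:\Q]$ is absorbed into the implied constant allowed in the statement. Using the tower formula for discriminants,
\[
|\disc(F/\Q)| = |\disc(k)|^m \cdot |N_{k/\Q}\disc(F/k)| = |\disc(k)|^m \cdot |\disc_B(\pi)|,
\]
I would substitute to obtain
\[
\#\mathcal{F}_{3,F}(S_3; x) \ll_{[k:\Q], m, \epsilon} |\Cl_F[2]|^{2/3} \cdot |\disc(k)|^{m(1+\epsilon)} \cdot |\disc_B(\pi)|^{1+\epsilon} \cdot x.
\]
Plugging back in $x = X / |\disc_B(\pi)|^3$ yields exactly the claimed bound (after renaming $\epsilon$), since $|\disc_B(\pi)|^{1+\epsilon}/|\disc_B(\pi)|^3 = 1/|\disc_B(\pi)|^{2-\epsilon}$ and the $|\disc(k)|^{m(1+\epsilon)}$ factor can be written as $|\disc(k)|^{m+\epsilon'}$ by absorbing $m\epsilon$ into $\epsilon'$ (and recalling that the implied constant is allowed to depend on $m$ and $[k:\Q]$).

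The only nontrivial step is verifying that \cite{LOWW} is stated in a form that makes the dependence on the base field $F$ polynomial in $|\disc(F/\Q)|$ with the $|\Cl_F[2]|^{2/3}$ factor isolated as above; this is the main obstacle, but it is precisely the content of the uniform cubic count in \cite{LOWW}, and if the bound there is written with a slightly different exponent, one simply reads off the resulting power of $|\disc_B(\pi)|$. Everything else is bookkeeping with the conductor-discriminant formula and the correspondence already established in Theorem~\ref{thm:twisted_for_S3r_wreath}.
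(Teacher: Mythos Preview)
Your proposal is correct and follows essentially the same approach as the paper's proof: reduce the fiber count to $c_G \cdot \#\mathcal{F}_{3,F}(S_3; X/|\disc_B(\pi)|^3)$ via the argument from Theorem~\ref{thm:twisted_for_S3r_wreath}, then apply \cite[Corollary 3.2]{LOWW}. Your version is simply more explicit, spelling out the discriminant tower formula $|\disc(F/\Q)| = |\disc(k)|^m |\disc_B(\pi)|$ and the resulting bookkeeping that the paper leaves implicit.
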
 
	\begin{proof}
		As in the proof of Theorem~\ref{thm:twisted_for_S3r_wreath}, we have that
			\[
				\#\{ \psi \in q_*^{-1}(\pi) : |\disc_G \psi| \leq X\}
					\leq c_G \cdot \#\mathcal{F}_{3,F}(S_3; X / |\disc_B (\pi)|^3 ),
			\]
		where $c_G$ is as in \eqref{eqn:c_G-def}.  On noting that $\disc_B(\pi) = \disc(F/k)$ and $c_G = O_m(1)$, the result then follows from \cite[Corollary 3.2]{LOWW}.
	\end{proof}

	We may now prove Theorem~\ref{thm:main_S3_wreath}.

	\begin{proof}[Proof of Theorem \ref{thm:main_S3_wreath}]
		We use Theorem~\ref{thm:main_pointwise} in concert with Lemma~\ref{lem:sur-to-fields}.  The precise counting of fibers is provided by Theorem~\ref{thm:twisted_for_S3r_wreath}, while the uniform upper bounds on fibers are provided by Lemma~\ref{lem:LOWW_uniform}.  It therefore remains to check the criterion for convergence.  We first note that, with $f(\pi)$ determined by Lemma~\ref{lem:LOWW_uniform}, 
			\begin{align*}
				\sum_{\pi \in \Sur(G_k,B)} f(\pi)
					&\ll_{k,m,\epsilon} \sum_{\pi \in \Sur(G_k,B)} \frac{|\Cl_F[2]|^{2/3}}{|\disc_B(\pi)|^{2-\epsilon}} \\
					&\ll_m \sum_{F \in \mathcal{F}_{m,k}(B;\infty)} \frac{|\Cl_F[2]|^{2/3}}{|\disc(F/k)|^{2-\epsilon}},
			\end{align*}
		where we have invoked Lemma~\ref{lem:sur-to-fields} in the second line.  We now recall that in the hypotheses of Theorem~\ref{thm:main_S3_wreath}, we have assumed there is some $\theta \geq 0$ so that
			\[
				\sum_{F \in \mathcal{F}_{m,k}(B;X)} |\Cl_F[2]|^{2/3}
					\ll_{m,k} X^\theta
			\]
		for every $X \geq 1$.  If $\theta< 2$, then the criterion for convergence is satisfied by partial summation, and this yields the first claim.
		If $\theta \geq 2$, then the criterion for convergence is not satisfied, and we instead find on using Lemma~\ref{lem:LOWW_uniform} directly that
			\[
				\#\mathcal{F}_{3m,k}(G;X)
					\ll_{k,m,\epsilon} \sum_{F \in \mathcal{F}_{m,k}(B;X^{1/3})} \frac{|\Cl_F[2]|^{2/3}}{|\disc(F/k)|^{2-\epsilon}}X
					\ll_{k,m,\epsilon} X^{\frac{\theta+1}{3} + \epsilon}.
			\]
		This completes the proof of the second claim, and thus the theorem.
	\end{proof}

\section{Inductive Bounds for $H^1_{ur}$}\label{sec:classgrp}
	In this section, we prove a number of bounds for $H^1_{ur}(k,A)$ where $A$ is some $G_k$-module. 
	For this section, we will use the usual additive notation for the group operation in $A$.
	The group $H^1_{ur}(k,A)$ is closely related to class group torsion.  This is clear when $A$ carries the trivial action, as $H^1_{ur}(k,A) = \Hom(\Cl_k,A)$ in this case.  For arbitrary $G_k$-modules, one key way to understand $H^1_{ur}(k,A)$ is through the restriction map
	$$
		H^1_{ur}(k,A) \ra  H^1_{ur}(F,A)=\Hom(\Cl_F,A),
	$$
	where $F$ is the field of definition for $A$ as a Galois module.
	This gives the bound of Lemma~\ref{lem:H1_first_bound} below that was stated in the introduction.

	For an abelian group $A$, let $\hat{A}:=\Hom(A,\Q/\Z)$ denote the Pontryagin dual.
	If $A$ is a $G$-module for some group $G$, then $\hat{A}$ is naturally a $G$-module via $(g \phi)( a)=\phi(g^{-1}a)$ for $g\in G$ and $a\in A$ and $\phi\in \hat{A}.$

	\begin{lemma}\label{lem:H1_first_bound}
		Let $k$ be a number field, $F/k$ a finite extension, and $A$ a $G_k$-module constant over $F$. Then
		\[
			|H^1_{ur}(k,A)| \le |A|^{[F:k]}\cdot |\Hom_{G_k}(\Cl_F,A)|,
		\]
		and in particular
			\[
				|H^1_{ur}(k,A)|\ll_{|A|,\epsilon} |\disc(F/\Q)|^{d(A)/2+\epsilon},
			\]
		where $d(A)$ is the minimal number of generators for $\hat{A}$ as a $G$-module.
	\end{lemma}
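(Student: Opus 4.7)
The plan is to apply the inflation--restriction exact sequence to the tower $k \subseteq F \subseteq \bar{k}$. Since $A$ is constant over $F$, $G_F$ acts trivially on $A$, so $A^{G_F}=A$, and the five-term sequence becomes
\[
0 \to H^1(\Gal(F/k), A) \to H^1(k,A) \xrightarrow{\operatorname{res}} H^1(F,A)^{\Gal(F/k)}.
\]
Restriction preserves being unramified at each place, since inertia at a place of $F$ lies inside inertia at the place of $k$ below, so restricting the sequence to $H^1_{ur}(k,A)$ keeps the kernel inside $H^1(\Gal(F/k),A)$ and the image inside $H^1_{ur}(F,A)^{\Gal(F/k)}$. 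I would bound the kernel via the count of crossed homomorphisms: $|H^1(\Gal(F/k),A)| \le |Z^1(\Gal(F/k),A)| \le |A|^{[F:k]}$. For the image, the trivial $G_F$-action identifies $H^1_{ur}(F,A)$ with $\Hom(\Cl_F,A)$ via class field theory applied to the (narrow) Hilbert class field, and taking $\Gal(F/k)$-invariants produces $\Hom_{G_k}(\Cl_F, A)$. Multiplying the kernel and image bounds yields the first inequality.

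For the ``in particular'' bound, I would bound the two factors separately and combine. By Pontryagin duality, $\Hom_{G_k}(\Cl_F, A) \cong \Hom_{G_k}(\hat{A}, \hat{\Cl_F})$, and a $\Z[G_k]$-equivariant map out of $\hat A$ is determined by the images of any $G_k$-module generating set; since by hypothesis $\hat A$ admits such a set of size $d(A)$, this yields an injection into $\hat{\Cl_F}^{d(A)}$ and hence $|\Hom_{G_k}(\Cl_F, A)| \le |\Cl_F|^{d(A)}$. Minkowski's bound on the class number, combined with Stirling to see that the Minkowski constant is absolutely bounded, gives $|\Cl_F| \ll |\disc(F/\Q)|^{1/2}$, so this factor contributes $\ll |\disc(F/\Q)|^{d(A)/2}$. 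To absorb $|A|^{[F:k]}$, I would apply the first inequality with $F$ replaced by the field of definition $F_0 \subseteq F$ of $A$; this only tightens the statement since $|\disc(F_0/\Q)| \le |\disc(F/\Q)|$ by the conductor--discriminant formula. Now $\Gal(F_0/k)$ embeds into $\Aut(A)$, so $[F_0:k] \le |\Aut(A)|$ depends only on $|A|$, and $|A|^{[F_0:k]}$ is absorbed into the implicit constant $\ll_{|A|,\epsilon}$; the $\epsilon$ accommodates minor discrepancies such as the factor $2^{r_1}$ between the narrow and ordinary class groups.

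The main subtlety is the class field theoretic identification $H^1_{ur}(F, A) = \Hom(\Cl_F, A)$, which must respect the paper's convention of requiring unramifiedness at infinite places as well; this replaces the ordinary class group by the narrow one, with the size difference absorbed into the $\epsilon$. Everything else combines standard inflation--restriction, Pontryagin duality, and Minkowski-type bounds for the class number.
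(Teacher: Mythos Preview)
Your proof is correct and follows essentially the same approach as the paper's: inflation--restriction to bound by $|H^1(\Gal(F/k),A)|\cdot |\Hom_{G_k}(\Cl_F,A)|$, then passing to the field of definition $F_0$, Pontryagin duality to get the $|\Cl_{F_0}|^{d(A)}$ bound, and Minkowski. One small correction: with the paper's convention (trivial on decomposition at infinite places), $H^1_{ur}(F,A)$ is $\Hom(\Cl_F,A)$ for the \emph{ordinary} class group, not the narrow one---but as you note, the discrepancy is harmless, and the $\epsilon$ is genuinely needed because Minkowski's bound for $|\Cl_F|$ is $\ll_{[F:\Q],\epsilon}|\disc(F/\Q)|^{1/2+\epsilon}$ rather than a clean square root.
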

	\begin{proof}
		Let $G=\Gal(F/k)$.
		The inflation-restriction sequence gives an exact sequence 
		\[
		\begin{tikzcd}
			0 \rar & H^1(G,A) \rar & H^1(k,A) \rar & H^1(F,A)^{G}.
		\end{tikzcd}
		\]
		Thus the kernel of $H^1_{ur}(k,A) \ra  H^1_{ur}(F,A)^{G}$ is a subgroup of 
		$H^1(G,A)$ and hence of size at most $|A|^{[F:k]}$.
		Since $G_F$ acts trivially on $A$, we have that $H^1_{ur}(F,A)=\Hom(\Cl_F,A)$, and the $G$-invariant elements are precisely the $G$-equivariant homomorphisms 
		$\Hom_G(\Cl_F,A).$ Thus
		\begin{align*}
			|H^1_{ur}(k,A)| \le |A|^{[F:k]}\cdot |\Hom_G(\Cl_F,A)|.
		\end{align*}

		If we let $F_0$ be the field of definition of $A$, since $\Gal(F_0/k)$ is a subgroup of
		$|\Aut(A)|,$ we have $|A|^{[F_0:k]}\ll_{|A|} 1.$ 
		For any two finite $G$-modules $A,B$, we have a natural bijection $\Hom_G(A,B)\ra \Hom_G(\hat{B},\hat{A})$.  Thus $|\Hom_G(\Cl_{F_0},A)|=|\Hom_G(\hat{A},\widehat{\Cl_{F_0}})|$
		is at most $|\Cl_{F_0}|^{d(A)}.$  Using Minkowski's bound gives
		\begin{align*}
			|H^1_{ur}(k,A)|\le |A|^{[F_0:k]}\cdot |\Cl_{F_0}|^{d(A)} \ll_{|A|,\epsilon} |\disc(F_0/\Q)|^{d(A)/2+\epsilon}\leq |\disc(F/\Q)|^{d(A)/2+\epsilon}.
		\end{align*}
	\end{proof}

	A weaker version of this bound was used in \cite{alberts2020}. Improved bounds for $|H^1_{ur}(k,A)|$ are the primary reason that our results beat the unconditional upper bounds proven in \cite{alberts2020} for solvable groups.

\subsection{Inductive Bounds}
	We will get better bounds than Lemma~\ref{lem:H1_first_bound} by a strategic application of the (co)induced module.

	We consider an extension of number field $E/k$.
	We recall the definition of the induced module ${\rm Ind}_E^k(A) = \Z[G_k]\otimes_{\Z[G_{E}]} A$. Shapiro's lemma states that $H^1(k,{\rm Ind}_E^k(A)) \cong H^1(E,A)$. 
	(Recall that when moving between a group and a finite-index subgroup that the induced and coinduced modules are isomorphic.)
	We will need the restriction of this to the subgroups of unramified coclasses.

	\begin{lemma}\label{lem:induced}
	Let $k$ be a number field with finite extension $E$.
		Let $A$ be a $G_E$-module. Then Shapiro's isomorphism $H^1(k,{\rm Ind}_E^k(A)) \cong H^1(E,A)$ restricts to an isomorphism
		\[
			H^1_{ur}(k,{\rm Ind}_E^k(A)) \cong H^1_{ur}(E,A).
		\]
	\end{lemma}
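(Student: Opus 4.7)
The plan is to verify that Shapiro's isomorphism is natural with respect to restriction to arbitrary closed subgroups of $G_k$, and then specialize this naturality to the inertia subgroups at each place.

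First, I would invoke Mackey's decomposition formula, which gives for any closed subgroup $H \le G_k$ a natural isomorphism of $H$-modules
\[
{\rm Ind}_{G_E}^{G_k}(A)\big|_H \;\cong\; \bigoplus_{s \in G_E \backslash G_k / H} {\rm Ind}_{s^{-1}G_E s \cap H}^{H}(s^{-1}A).
\]
Taking $H^1(H,-)$ and applying Shapiro's lemma to each summand then yields a natural decomposition
\[
H^1(H, {\rm Ind}_E^k(A)) \;\cong\; \bigoplus_{s} H^1(s^{-1}G_E s \cap H,\, s^{-1}A),
\]
which fits into a commutative square with the global Shapiro isomorphism and the restriction maps ${\rm res}_H$ on both sides. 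This is the essential piece of naturality that drives the argument.

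Second, I would specialize to $H = I_\mathfrak{p}$ for each finite or infinite place $\mathfrak{p}$ of $k$. Standard algebraic number theory identifies the double cosets $G_E \backslash G_k / D_\mathfrak{p}$ with the set of places $\mathfrak{P}$ of $E$ above $\mathfrak{p}$, and a choice of representative $s$ for the double coset corresponding to $\mathfrak{P}$ satisfies $s^{-1}G_E s \cap D_\mathfrak{p} = D_\mathfrak{P}$ and $s^{-1}G_E s \cap I_\mathfrak{p} = I_\mathfrak{P}$ (with the analogous convention at infinite places). Since the $s^{-1}$-twist in the Mackey decomposition merely relabels $A$ by a conjugate $G_E$-action equivalent to viewing $A$ at the place $\mathfrak{P}$, the diagram
\[
\begin{array}{ccc}
H^1(k, {\rm Ind}_E^k(A)) & \xrightarrow{\;\sim\;} & H^1(E, A) \\
{\rm res}_{I_\mathfrak{p}} \big\downarrow & & \big\downarrow \bigoplus_{\mathfrak{P}|\mathfrak{p}} {\rm res}_{I_\mathfrak{P}} \\
H^1(I_\mathfrak{p}, {\rm Ind}_E^k(A)) & \xrightarrow{\;\sim\;} & \bigoplus_{\mathfrak{P}|\mathfrak{p}} H^1(I_\mathfrak{P}, A)
\end{array}
\]
commutes.

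Finally, the unramified conditions on the two sides match: a global class $[c] \in H^1(k, {\rm Ind}_E^k(A))$ restricts to $0$ at every $I_\mathfrak{p}$ if and only if, for every $\mathfrak{p}$ and every place $\mathfrak{P}$ of $E$ above $\mathfrak{p}$, the image in $H^1(I_\mathfrak{P}, A)$ vanishes. Since every place of $E$ lies above a unique place of $k$, this is exactly the statement that the Shapiro image lies in $H^1_{ur}(E,A)$. The restricted Shapiro map is therefore a well-defined isomorphism as claimed. The main obstacle is simply the bookkeeping to confirm that Mackey restriction, Shapiro, and the identification of the subgroups $D_\mathfrak{P}, I_\mathfrak{P}$ inside $D_\mathfrak{p}, I_\mathfrak{p}$ all fit in a single commutative diagram; no deep input is needed beyond the naturality of Shapiro's lemma and the standard description of places of $E$ above $\mathfrak{p}$ via double cosets.
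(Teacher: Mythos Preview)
Your approach is correct and matches the paper's, which simply cites a commutative diagram in Skinner--Urban for the compatibility of Shapiro's isomorphism with local restriction maps; your proposal spells out the mechanism behind that diagram via Mackey decomposition plus Shapiro on each summand.

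One small point of care: when you specialize Mackey directly to $H = I_\fp$, the double coset set $G_E \backslash G_k / I_\fp$ can be strictly larger than the set of places $\mathfrak{P} \mid \fp$ (for instance when $\fp$ is inert and unramified in $E$, so that $I_\fp \subseteq G_E$), and hence the bottom horizontal arrow in your square is not literally an isomorphism onto $\bigoplus_{\mathfrak{P}\mid\fp} H^1(I_\mathfrak{P},A)$ as written. The clean fix is the one your own wording already hints at: apply Mackey with $H = D_\fp$, where the double cosets do biject with the places $\mathfrak{P} \mid \fp$, and then further restrict each summand from $D_\mathfrak{P}$ to $I_\mathfrak{P}$. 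Alternatively, one checks that the extra summands arising for $H=I_\fp$ impose redundant conditions on a global class. Either way the argument goes through without difficulty.
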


	\begin{proof}
	This follows from the commutative diagram \cite[Equation (3.3)]{skinner2014iwasawa} when the bottom row is restricted to inertia. In fact, Skinner--Urban state this result in words a couple of paragraphs below this diagram.
	\end{proof}

	By making strategic use of Lemma~\ref{lem:induced}, we can prove the following bounds for $|H^1_{ur}(k,A)|$ that are useful in inductive arguments.

	\begin{lemma}\label{lem:inductive_H1ur_bound}
	Let $k$ be a number field and $F/k$ a finite extension. Let $A$ be a finite $G_k$-module constant over $F$. Let $G=\Gal(F/k)$. Suppose 
	\begin{itemize}
		\item $H\subset G$ is a subgroup,
		\item $M\subset A$ is a sub $H$-module.
	\end{itemize}
	Then
	\[
	|H^1_{ur}(k,A)| \le |H^1_{ur}(k,{\rm Core}(M))|\cdot |H^1_{ur}(F^H,A/M)| \cdot [A:M]^{[F^H:k]}[A:{\rm Core}(M)]^{\omega(F/k)-1},
	\]
	where ${\rm Core}(M) = \bigcap_{g\in G_k} gM$ is the $G_k$-core of $M$ and $\omega(F/k)$ equals the number of places ramified in $F/k$ (including infinite places). In particular, we also have
	\[
		|H^1_{ur}(k,A)| \ll_{|A|,\epsilon}  |H^1_{ur}(k,{\rm Core}(M))|\cdot |H^1_{ur}(F^H,A/M)| \cdot |\disc(F/\Q)|^{\epsilon}.
	\]
	\end{lemma}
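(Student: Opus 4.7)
The plan is to bound $|H^1_{ur}(k,A)|$ by separately controlling the kernel and image of the natural map $H^1_{ur}(k,A) \to H^1_{ur}(k, A/N)$ (where $N := {\rm Core}(M)$), which arises from the short exact sequence of $G_k$-modules $0 \to N \to A \to A/N \to 0$. For the image side, I will use an embedding $A/N \hookrightarrow {\rm Ind}_{F^H}^k(A/M)$ combined with Shapiro's lemma (Lemma \ref{lem:induced}). For the kernel side, I will use the long exact sequence together with a local-global analysis that exploits the action of inertia.

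For the embedding: since $N = \bigcap_{gH \in G/H} gM$, the diagonal map $A/N \to \bigoplus_{gH \in G/H} A/(gM)$, $a + N \mapsto (a+gM)_{gH}$, is injective. The target is $G_k$-equivariantly isomorphic to ${\rm Ind}_{F^H}^k(A/M)$ via the permutation action of $G$ on cosets. The cokernel $Q$ satisfies $|Q| \le [A:M]^{[F^H:k]}$, so the long exact sequence of the embedding, together with Lemma \ref{lem:induced} (which identifies $H^1_{ur}(k, {\rm Ind}_{F^H}^k(A/M))$ with $H^1_{ur}(F^H, A/M)$), shows the image of $H^1_{ur}(k, A)$ in $H^1_{ur}(F^H, A/M)$ has size at most $[A:M]^{[F^H:k]} \cdot |H^1_{ur}(F^H, A/M)|$.

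For the kernel: an element in the kernel of $H^1_{ur}(k,A) \to H^1_{ur}(k,A/N)$ lifts to some $\beta \in H^1(k, N)$ uniquely modulo $\ker \iota = \delta((A/N)^{G_k})$, where $\iota\colon H^1(k, N) \to H^1(k, A)$. The unramifiedness of $\iota(\beta)$ translates to $\beta|_{I_v} \in \delta((A/N)^{I_v})$ for every place $v$. The key observation is that if $v$ is unramified in $F/k$, then $I_v$ acts trivially on $A$ (and on $A/N$), so the connecting map vanishes on $I_v$-invariants and $\beta|_{I_v} = 0$. Hence $\beta$ is automatically unramified outside the $\omega(F/k)$ ramified places of $F/k$, with local obstruction at each such place of size at most $[A:N]$. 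The improvement from exponent $\omega(F/k)$ to $\omega(F/k)-1$ emerges by recognizing that the ambiguity $\delta((A/N)^{G_k})$ in the lift $\beta$ maps nontrivially into the product of local obstructions; this global relation (essentially a mild input of class field theory / Poitou--Tate duality) cancels one factor of $[A:N]$ compared with the naive product.

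The main technical obstacle is establishing this $\omega(F/k)-1$ save precisely, since the naive local count gives only $\omega(F/k)$; the save requires carefully tracking the image of $\delta((A/N)^{G_k})$ in $\bigoplus_{v\text{ ram}} \delta((A/N)^{I_v})$ and dividing out by it. Once this is done, multiplying the kernel and image bounds yields the first displayed inequality. The second bound then follows immediately: $\omega(F/k) \ll_\epsilon |\disc(F/\mathbb{Q})|^\epsilon$ by the divisor bound, while $[F^H:k] \le |\Gal(F/k)| \le |{\rm Aut}(A)|$ and $[A:M], [A:N] \le |A|$ are all bounded in terms of $|A|$ alone, so the factor $[A:M]^{[F^H:k]}[A:N]^{\omega(F/k)-1}$ is absorbed into $|\disc(F/\mathbb{Q})|^\epsilon$ up to a constant depending on $|A|$ and $\epsilon$.
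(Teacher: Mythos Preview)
Your overall architecture matches the paper's: both use the map $A \to {\rm CoInd}^G_H(A/M)$ (equivalently, the composition of $A \twoheadrightarrow A/N$ with your diagonal embedding $A/N \hookrightarrow {\rm Ind}_{F^H}^k(A/M)$), identify the kernel as $N = {\rm Core}(M)$, and then bound $H^1_{ur}(k,A)$ via the resulting kernel/image decomposition together with Shapiro's lemma (Lemma~\ref{lem:induced}). The treatment of the kernel-side Selmer group is also the same: at places unramified in $F/k$ the connecting map $\delta$ vanishes, and at each of the $\omega(F/k)$ ramified places the local obstruction has size at most $[A:N]$.

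However, your proposed source for the ``$-1$'' save is not the right one, and as stated it does not work. You claim that dividing out by the ambiguity $\delta((A/N)^{G_k})$ in the lift cancels one full factor of $[A:N]$, appealing vaguely to Poitou--Tate. But $|\delta((A/N)^{G_k})|$ can be strictly smaller than $[A:N]$; for instance it equals $1$ whenever $(A/N)^{G_k}=0$, or whenever $A^{G_k}$ surjects onto $(A/N)^{G_k}$. So this mechanism does not in general produce the claimed save. The paper instead obtains the ``$-1$'' on the \emph{image} side, with no duality input at all: the cokernel $Q$ of your embedding $A/N \hookrightarrow {\rm Ind}_{F^H}^k(A/M)$ has size \emph{exactly}
\[
|Q| \;=\; \frac{|{\rm Ind}_{F^H}^k(A/M)|}{|A/N|} \;=\; \frac{[A:M]^{[F^H:k]}}{[A:N]},
\]
not merely $|Q|\le [A:M]^{[F^H:k]}$ as you wrote. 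Using this exact value for the image-side bound and the naive Selmer bound $[A:N]^{\omega(F/k)}$ on the kernel side, the product is $[A:M]^{[F^H:k]}\cdot [A:N]^{\omega(F/k)-1}$, exactly as required. So the fix is simply to tighten your cokernel estimate to its exact value and discard the duality argument entirely.
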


	Before proving Lemma~\ref{lem:inductive_H1ur_bound}, we discuss how it may be used to improve upper bounds for $|H^1_{ur}(k,A)|$. Savings occur in essentially two ways:
	\begin{itemize}
		\item Moving from $A$ to the pair ${\rm Core}(M), A/M$ reduces the size of the modules being considered by a factor of $[M:{\rm Core(M)}]$. This translates into savings which are potentially significant for large modules that have few indecomposable factors.
		\item The presence of $F^H$ in place $F$ in the second factor introduces additional savings. This piece can then be bounded in terms of torsion in $\Cl_{F^H}$ instead of $\Cl_F$ using Lemma~\ref{lem:H1_first_bound}, which is typically smaller.
	\end{itemize}

	\begin{proof}
	The proof is via using exact sequences to bound the size of various terms. Consider the homomorphism
	\[
	\phi:A \to \text{CoInd}^G_H(A/M):=\Hom_{\Z[H]}(\Z[\Gal(F/k)],A/M)
	\]
	defined by $a\mapsto (f_a: r\mapsto raM)$.
	The kernel of this map is
	\[
	\{a\in A : ga \in M \text{ for all }g\in \Gal(F/k)\} = {\rm Core}(M).
	\]
	Thus we have an exact sequence
	\begin{equation}\label{E:conindex}
	\begin{tikzcd}
	0 \rar & {\rm Core}(M) \rar & A \rar{\phi} & \text{CoInd}^G_H(A/M).
	\end{tikzcd}
	\end{equation}
	By taking the corresponding long exact sequence of cohomology, we have an exact sequence
	\[
	\begin{tikzcd}
	H^0(k,\im\phi) \rar{\delta} & H^1(k,{\rm Core}(M)) \rar{\iota_*} & H^1(k,A) \rar{\phi_*} & H^1(k,\im\phi),
	\end{tikzcd}
	\]
	as well as
	the analogous sequence when $k$ is replaced by any inertia group $I_v$ of $\Gal(F/k)$.
	Next, we need to consider the unramified parts. We apply the snake lemma to
	\[
	\begin{tikzcd}
	{}&H^1(k,{\rm Core}(M))\dar \rar{\iota_*} & H^1(k,A)\dar \rar{\phi_*} & H^1(k,\im\phi)\dar\\
	0\rar & \prod_\nu H^1(I_v,{\rm Core}(M))/\delta(H^0(I_\nu,\im\phi)) \rar & \prod_\nu H^1(I_\nu,A) \rar & \prod_{\nu} H^1(I_\nu,\im\phi).
	\end{tikzcd}
	\]
	Just looking at the kernels from the snake lemma, this implies
	\[
	\begin{tikzcd}
	{\rm Sel}(k,{\rm Core}(M)) \rar& H^1_{ur}(k,A) \rar & H^1_{ur}(k,\im\phi)
	\end{tikzcd}
	\]
	is exact, where ${\rm Sel}(k,{\rm Core}(M))$ fits into the exact sequence
	\[
	\begin{tikzcd}
	0\rar & H^1_{ur}(k,{\rm Core}(M)) \rar &{\rm Sel}(k,{\rm Core}(M)) \rar& \prod_{\nu} \delta(H^0(I_\nu,\im\phi)).
	\end{tikzcd}
	\]
	Thus,
	\begin{equation}\label{E:H1main}
		|H^1_{ur}(k,A)| \le |{\rm Sel}(k,{\rm Core}(M))|\cdot |H^1_{ur}(k,\im\phi)|.
	\end{equation}
	We now bound $|{\rm Sel}(k,{\rm Core}(M))|$, starting with
	\begin{align*}
		|{\rm Sel}(k,{\rm Core}(M))| 
		&\le |H^1_{ur}(k,{\rm Core}(M))|\cdot \prod_{\nu} |\delta(H^0(I_\nu,\im\phi))|.
	\end{align*}
	If $\nu$ is unramified in $F/k$ then the connecting homomorphism $\delta$ is trivial on $H^0(I_\nu,\im\phi)$, and otherwise
	\[
		|\delta(H^0(I_v,\im\phi))|\le |\im\phi| = [A:{\rm Core}(M)].
	\]
	Thus we have bounded
	\begin{equation}\label{E:Selbound}
	|{\rm Sel}(k,{\rm Core}(M))| \le |H^1_{ur}(k,{\rm Core}(M))| [A:{\rm Core}(M)]^{\omega(F/k)}.
	\end{equation}

	In order to bound $H^1_{ur}(k,\im\phi)$, we obtain another exact sequence from \eqref{E:conindex},
	\[
	\begin{tikzcd}
	H^0(k,\coker\phi) \rar &H^1(k,\im\phi) \rar & H^1(k,\text{CoInd}^G_H(A/M)).
	\end{tikzcd}
	\]
	The analogous exact sequence for $I_v$ in place of $k$, and commuting restriction maps, imply we have a map
	\[
	\begin{tikzcd}
	 &H^1_{ur}(k,\im\phi) \rar & H^1_{ur}(k,\text{CoInd}^G_H(A/M)).
	\end{tikzcd}
	\]
	whose kernel has size at most $|H^0(k,\coker\phi)|$.
	Thus,
	\[
		|H^1_{ur}(k,\im\phi)| \le |H^0(k,\coker\phi)| \cdot |H^1_{ur}(k,\Hom_{\Z[H]}(\Z[\Gal(F/k)],A/M))|.
	\]
	We have
	\[
		|H^0(k,\coker\phi)|\le |\coker\phi| = \frac{|\Hom_{\Z[H]}(\Z[\Gal(F/k)],A/M)|}{|\im\phi|} = \frac{[A:M]^{[F^H:k]}}{[A:{\rm Core}(M)]}.
	\] Shapiro's Lemma restricted to the unramified classes (Lemma \ref{lem:induced}) implies 
	\[
		H^1_{ur}(k,\Hom_{\Z[H]}(\Z[\Gal(F/k)],A/M)) \cong H^1_{ur}(F^H,A/M).
	\]
	Thus we have shown
	\begin{equation}\label{E:H1unbound}
		|H^1_{ur}(k,\im\phi)| \le \frac{[A:M]^{[F^H:k]}}{[A:{\rm Core}(M)]}\cdot |H^1_{ur}(F^H,A/M)| .
	\end{equation}
	Multiplying \eqref{E:Selbound} and \eqref{E:H1unbound}, and applying \eqref{E:H1main},  gives
	\begin{align*}
		|H^1_{ur}(k,A)| &\le |H^1_{ur}(k,{\rm Core}(M))\cdot|[A:{\rm Core}(M)]^{\omega(F/k)-1}\cdot [A:M]^{[F^H:k]}\cdot |H^1_{ur}(F^H,A/M)| ,
	\end{align*}
	concluding the proof of the upper bound.

	For the $\ll$ upper bound, we use the fact that $G$ acts faithfully on $A$, and thus $[F^H:k]=|G|/|H|$ is bounded in terms of $|A|$.
	Also $c^{\omega(F/k)} \ll_{c,\epsilon} \disc(F/\Q)^{\epsilon}$ for any fixed constant $c$. This gives the $\ll$ upper bound.
	\end{proof}

\subsection{Applications}

	In several of the examples detailed in Section \ref{sec:examples}, we reference Lemma \ref{lem:inductive_H1ur_bound} directly so that we can choose $M$ and $H$ optimally for the given situation. However, in practice it can be difficult to determine which pairs $M$, $H$ are optimal for using Theorem \ref{thm:main_abelian_on_top}.

	We first give a simple lemma to let us compare between a module and its submodules.
	\begin{lemma}\label{lem:embedding_H1ur}
		Let $\iota:A_1\hookrightarrow A_2$ be an injective homomorphism of $G_k$-modules. Then
		\[
			|H^1_{ur}(k,A_1)| \le [A_2:A_1]\cdot |H^1_{ur}(k,A_2)|.
		\]
		In particular,
		\[
			|H^1_{ur}(k,A_1)| \ll_{|A_2|} |H^1_{ur}(k,A_2)|.
		\]
	\end{lemma}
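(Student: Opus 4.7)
The plan is to deduce the bound from the long exact sequence in cohomology attached to the short exact sequence of $G_k$-modules
\[
0 \longrightarrow A_1 \xrightarrow{\ \iota\ } A_2 \longrightarrow A_2/A_1 \longrightarrow 0,
\]
combined with the naturality of that sequence under restriction to inertia. First I would observe that the induced map $\iota_*\colon H^1(k,A_1)\to H^1(k,A_2)$ carries unramified classes to unramified classes, since restriction commutes with the pushforward along $\iota$, so $\iota_*$ restricts to a map $H^1_{ur}(k,A_1)\to H^1_{ur}(k,A_2)$.

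Next, the long exact sequence gives an exact piece
\[
H^0(k,A_2/A_1) \xrightarrow{\ \delta\ } H^1(k,A_1) \xrightarrow{\ \iota_*\ } H^1(k,A_2),
\]
so the kernel of $\iota_*\colon H^1(k,A_1)\to H^1(k,A_2)$ has size at most $|H^0(k,A_2/A_1)|\le |A_2/A_1|=[A_2:A_1]$. Restricting to the unramified subgroup, the kernel of $\iota_*\colon H^1_{ur}(k,A_1)\to H^1_{ur}(k,A_2)$ is contained in this kernel and hence also has size at most $[A_2:A_1]$. Combining the kernel bound with the obvious bound on the image,
\[
|H^1_{ur}(k,A_1)|\;\le\;[A_2:A_1]\cdot|\iota_*H^1_{ur}(k,A_1)|\;\le\;[A_2:A_1]\cdot|H^1_{ur}(k,A_2)|,
\]
which is the first displayed inequality.

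The second displayed inequality is immediate from the first, since $[A_2:A_1]\le |A_2|$ is bounded in terms of $|A_2|$ alone. There is no real obstacle here; the only small point requiring attention is to verify that $\iota_*$ genuinely sends unramified classes to unramified classes, which is a direct consequence of functoriality of restriction maps $\operatorname{res}_{I_\fp}$ with respect to morphisms of $G_k$-modules. Everything else is formal diagram chasing.
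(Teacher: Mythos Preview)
Your proof is correct and follows essentially the same approach as the paper: both use the long exact sequence in cohomology to bound the kernel of $\iota_*$ by $|H^0(k,A_2/A_1)|\le [A_2:A_1]$, and then combine this with the trivial bound on the image inside $H^1_{ur}(k,A_2)$. You are slightly more explicit about why $\iota_*$ preserves unramified classes, but otherwise the arguments coincide.
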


	\begin{proof}
		The long exact sequence of cohomology gives an exact sequence
		$$
		H^0(k,A_2/A_1)\ra H^1(k,A_1) \ra H^1(k,A_2),
		$$
		and thus the kernel $N$ of $H^1_{ur}(k,A_1) \ra H^1_{ur}(k,A_2)$
		is surjected onto by a subgroup of $H^0(k,A_2/A_1)$. Thus,
		\begin{align*}
			|H^1_{ur}(k,A_1)| &\le |N|\cdot |H^1_{ur}(k,A_2)|\\
			&\le |H^0(k,A_2/A_1)|\cdot |H^1_{ur}(k,A_2)|\\
			&\le |A_2/A_1|\cdot |H^1_{ur}(k,A_2)|.
		\end{align*}
	\end{proof}

	Corollary \ref{cor:inductive_H1ur_bounds} in the introduction includes some special cases for which we know how to make an optimal choice for $M$ and $H$ in Lemma \ref{lem:inductive_H1ur_bound}.

	\begin{proof}[Proof of Corollary \ref{cor:inductive_H1ur_bounds}]
	Part (i): Let $M_0=A$.  We define $G_k$-modules $M_i$ and $N_i$ recursively, such that $N_i$ is the $G_k$-module generated by $g.m-m$ for $g\in G_k$ and $m\in M_i$ and $M_{i+1}={\rm Core}(N_i).$
	We then apply Lemma~\ref{lem:inductive_H1ur_bound} with $A=M_i$, and $H=G=\Gal(F/k)$, and the $M$ from Lemma~\ref{lem:inductive_H1ur_bound} being our $N_i$.
	Since $G_k$ acts trivially on $M_i/N_i$, we have $|H^1_{ur}(k,M_i/N_i)|=|\Hom(\Cl_k,M_i/N_i)|\ll_{k,|A|} 1$.  Thus we obtain
	\begin{align*}
	|H^1_{ur}(k,M_i)|&\ll_{|A|,\epsilon} |H^1_{ur}(k,{\rm Core}(N_i))||H^1_{ur}(k,M_i/N_i)| |\disc(F/\Q)|^\epsilon\\
	&\ll_{k,|A|,\epsilon} |H^1_{ur}(k,M_{i+1})||\disc(F/\Q)|^\epsilon.
	\end{align*}
	If we let $A_1=A$ and let $\Gamma^{j}_G(A)$ be the $G_k$-module generated by $g.m-m$ for $g\in G_k$ and $m\in \Gamma^{j-1}_G(A)$, then we can see inductively that $M_j\subset \Gamma^{j}_G(A).$  That $A$ is nilpotent means that $\Gamma^{j}_G(A)$ for some $j$, and thus $M_j=0$.  In particular this $j$ is bounded in terms of $|A|,$ and so we can apply the above inequality inductively to obtain the statement of part (i).

	Part (ii): Every proper subgroup of a simple module is necessarily core-free. Given a simple module $A$ of exponent $e$, choose some element $a\in A$ of order $e$. By the classification of finitely generated abelian groups, there exists a proper subgroup $M\le A$ for which $A = \langle a\rangle \oplus M$ as abelian groups. We know that ${\rm Core}(M) = 1$, so Lemma \ref{lem:inductive_H1ur_bound} with this $M$ and $H=1$ implies
	\begin{align*}
		|H^1_{ur}(k,A)| &\ll_{|A|,\epsilon} |H^1_{ur}(F,A/M)| \cdot |\disc(F/\Q)|^{\epsilon}.  
	\end{align*}
	Since $G_F$ acts trivially on $A$ and hence $A/M\cong \langle a\rangle$, we have
	\begin{align*}
		|H^1_{ur}(k,A)| &\ll_{|A|,\epsilon} |\Hom(\Cl_F,\langle a \rangle)| \cdot |\disc(F/\Q)|^{\epsilon} = |\Cl_F[e]| \cdot |\disc(F/\Q)|^{\epsilon}.  
	\end{align*}

	Part (iii): We are given an embedding $A'\hookrightarrow {\rm Ind}_F^k(A)$. 
	Lemmas \ref{lem:embedding_H1ur} and \ref{lem:induced} imply that
	\[
		|H^1_{ur}(k,A')| \ll_{|{\rm Ind}_F^k(A)|} |H^1_{ur}(k,{\rm Ind}_F^k(A))|=|H^1_{ur}(F,A)|
		=|\Hom(\Cl_F,A)|.
	\]
	\end{proof}

\section{The Pushforward Discriminant}\label{sec:pushforward_invariants}
	Let $G$ be a finite permutation group, and $T$ a normal subgroup of $G$.  We give $G/T$ the regular permutation action (i.e. by left mutiplication on the set of group elements).   
	We expressed the inputs of Theorem \ref{thm:main_abelian_on_top} in terms of the image
	\[
	q_*\Sur(G_k,G;X) = \{\pi\in \Sur(G_k,G/T) : \pi = q_*\psi\text{ for } \psi\in \Sur(G_k,G;X)\},
	\]
	which under the Galois correspondence is (up to multiplicity) the set of  $G/T$-extensions $L/k$ in $\bar{k}$ for which there exists a Galois $G$-extension $F/k$ with $F^T=L$ and $|\disc(F^{\Stab_G(1)}/\Q)|\le X$. The asymptotics of this particular set have not been studied previously to our knowledge.

	The primary difficulty is that ordering by the discriminant of a lift to a $G$-extension need not agree with a discriminant ordering for $G/T$-extensions. The purpose of this section is to define the pushforward discriminant $q_*\disc$ on $\Sur(G_k,G/T)$ in order to have an invariant we can relate to the discriminant of a $G$-extension lifting a $G/T$-extension. The point of the definition will be that
	\begin{equation}\label{E:push_forward}
	q_*\Sur(G_k,G;X) \subseteq \{\pi\in \Sur(G_k,G/T) : q_*\disc(\pi) \le X\}.
	\end{equation}

	For a prime ideal $\fp$ of $k$, 
	let $k_\fp$ denote the completion of $k$ at $\fp$.  For each prime ideal $\fp$ of $k$, we fix a choice of
	$k$-homomorphism $\bar{k}\ra\overline{k_\fp}$ giving a fixed choice of homomorphsim $G_{k_\fp}\ra G_k.$
	Given a $\psi\in\Sur(G_k,G)$, the relative discriminant ideal $\disc (\psi)$ is given as a product of local factors
	$$
	\disc(\psi) =\prod_\fp \fp^{f_\fp(\psi)},
	$$
	where the product is over prime ideals $\fp$ of $k$, and 
	${f_\fp(\psi)}$ is the local Artin conductor at $\fp$ of the composition of $\psi$ with the permutation representation of $G$. In particular, at a tame prime $\fp$ we have $f_\fp(\psi)=\ind(\psi(\tau_\fp)),$
	where $\tau_\fp$ is a generator of tame inertia.

	Let $q:G \ra H$ be a group homomorphism.
	For $\pi_\fp\in \Hom(G_{k_\fp},H)$ and $\pi\in\Hom(G_k,H)$, we define 
		\begin{align}\label{eq:pushforward_discriminant}
			q_*{f_\fp}(\pi_\fp) = \underset{\substack{\psi_\fp\in \Hom(G_{k_\fp},G)
			\\q\circ\psi_\fp=\pi_\fp}}{\min} {f_\fp}(\psi_\fp)\quad \quad\textrm{and}
			\quad \quad q_*\disc(\pi) =\prod_\fp \fp^{q_*f_\fp(\pi_p)},
		\end{align}
		where by convention $q_*{f_\fp}(\pi_\fp)=\infty$
		if there does not exist a $\psi_\fp\in \Hom(G_{k_\fp},G)$ lifting
		$\pi_\fp\in \Hom(G_{k_\fp},H)$.
	This immediately ensures \eqref{E:push_forward}.

	We define the pushforward of the index function to be
	\begin{align}\label{eq:pushforward_index}
		q_*\ind(gT) = \min_{hT=gT}\ind(h),
	\end{align}
	so that for tame places
	\[
		\nu_\fp(q_*\disc \pi) \ge q_*\ind(q_*\pi(\tau_\fp))
	\]
	for $\tau_\fp$ any generator of the tame inertia group at $\fp$. We then obtain the following conjecture following from a heuristic of Ellenberg and Venkatesh \cite[Question 4.3]{ellenberg-venkatesh2005}.

	\begin{conjecture}[The Weak Form of Malle's Conjecture for Pushforward Discriminants]\label{conj:WMpush}
		Let $k$ be a number field, $G$ a finite permutation group with normal subgroup $T\normal G$, and $q\colon G\to G/T$ the quotient map. Then
		\[
			\#\{\pi\in \Sur(G_k,G/T) : q_*\disc(\pi) \le X\} \ll_{\epsilon} X^{1/a(G-T) + \epsilon},
		\]
		where $a(G-T) = \min_{g\in G-T} \ind(g)$. 
	\end{conjecture}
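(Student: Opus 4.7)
The plan is to prove Conjecture~\ref{conj:WMpush} by adapting Schmidt-style counting \cite{schmidt1995} to the pushforward discriminant ordering. The argument proceeds in three stages: a local lower bound at each ramified prime, a local count of admissible inertia data, and a global assembly step.

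First, I would establish the key local lower bound. At any tame prime $\fp$ at which $\pi \in \Sur(G_k, G/T)$ is ramified, a generator $\tau_\fp$ of tame inertia satisfies $\pi(\tau_\fp) \ne 1$, so every lift $h \in G$ of $\pi(\tau_\fp)$ lies in $G \setminus T$ and has $\ind(h) \ge a(G-T)$; therefore $q_*f_\fp(\pi) = q_*\ind(\pi(\tau_\fp)) \ge a(G-T)$. It follows that if $\pi$ is tamely ramified at $r$ distinct primes $\fp_1, \dots, \fp_r$ with $q_*\disc(\pi)$ of norm at most $X$, then
\[
\prod_{i=1}^r |\fp_i|^{a(G-T)} \le X,
\]
which rigidly controls the possible supports of ramification.

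Second, I would carry out the local count. For each tame prime $\fp$, the local homomorphisms $\pi_\fp \colon G_{k_\fp} \to G/T$ with a fixed exponent $q_*f_\fp(\pi_\fp) = c$ have cardinality $O_G(|\fp|)$: the inertia image $\pi_\fp(\tau_\fp)$ ranges over finitely many conjugacy classes in $G/T$ (determined by $c$), while the Frobenius image contributes $O(|\fp|)$ possibilities. Summing the product of local counts over all tuples of primes and exponents $c_i \ge a(G-T)$ with $\prod |\fp_i|^{c_i} \le X$ yields, by a standard divisor-sum estimate, a bound of the desired shape $O_\epsilon(X^{1/a(G-T)+\epsilon})$. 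Wild primes contribute only a convergent Euler factor and can be absorbed into the $\epsilon$.

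The main obstacle is the global assembly step: the restriction map $\Sur(G_k, G/T) \to \prod_\fp \Hom(G_{k_\fp}, G/T)$ is neither injective nor surjective, and bounding its fibers and image uniformly is precisely the difficulty underlying the weak upper bound in the original Malle conjecture. For solvable $G/T$, one can hope to proceed inductively via class field theory along an abelian tower, at each step using the abelian upper bound of Wright \cite{wright1989} adapted to the pushforward discriminant (noting that $q_*\ind$ behaves well under composition of quotients). For general $G/T$, Conjecture~\ref{conj:WMpush} appears to be at least as hard as the weak form of Malle's conjecture for $G/T$ itself, so a complete proof is likely out of reach with current techniques; a natural intermediate target, sufficient for every application in the present paper, is the solvable case.
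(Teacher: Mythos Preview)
The statement you are attempting to prove is labeled and treated as a \emph{conjecture} in the paper; the paper does not give a proof. The only case the paper asserts to be known is when $G/T$ is nilpotent, and for that it simply cites \cite[Theorem~1.6]{Kluners2022} and \cite[Corollary~5.2]{alberts2020} rather than arguing directly. So there is no ``paper's own proof'' to compare your proposal against, and your own final paragraph is correct that a proof for general $G/T$ is not expected with current methods.

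On the specifics of your sketch: your first stage (the local lower bound $q_*f_\fp(\pi_\fp)\ge a(G-T)$ at tamely ramified primes) is fine and is exactly the observation the paper makes just before stating the conjecture. Your second stage, however, contains an inaccuracy and, more importantly, does not do the work you want it to. The number of tame local homomorphisms $G_{k_\fp}\to G/T$ is $O_G(1)$, not $O_G(|\fp|)$: both the inertia generator and Frobenius land in the fixed finite group $G/T$. But even with the correct local count, summing products of local densities over ramification supports only bounds the number of local data tuples, not the number of global $\pi$. The failure of injectivity of the restriction map is not a technicality to be patched; it is the whole problem. Two global $\pi$ can agree at every ramified prime and still differ (through unramified cohomology / class group contributions), and controlling this multiplicity uniformly is precisely weak Malle for $G/T$ with the invariant $q_*\disc$. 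Your ``Schmidt-style'' framing is misleading here, since Schmidt's method bounds algebraic integers of bounded height, which is a different mechanism from bounding homomorphisms with prescribed ramification.

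Your proposed solvable route via an abelian tower is reasonable in spirit but is not what the paper invokes, and you have not said how the pushforward discriminant interacts with the tower (one needs that at each stage the induced invariant on the abelian layer is again an admissible invariant to which Wright-type bounds apply). The paper's cited nilpotent case goes through the discriminant multiplicity conjecture for nilpotent groups, or equivalently through \cite{alberts2020}, not through an ad hoc tower argument. If you want a provable target, the honest statement is: Conjecture~\ref{conj:WMpush} holds when $G/T$ is nilpotent, by those references; in general it is open and is at least as hard as the weak form of Malle's conjecture for $G/T$ ordered by $q_*\disc$.
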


	Ellenberg and Venkatesh's heuristic is known to hold for nilpotent groups, from which Conjecture \ref{conj:WMpush} for $G/T$ nilpotent follows. This follows from the discrimnant multiplicity conjecture for nilpotent groups \cite[Theorem 1.6]{Kluners2022}, or is proven directly by Alberts in \cite[Corollary 5.2]{alberts2020} (with $N=G$ a nilpotent group).

	\begin{remark}
		Ellenberg and Venkatesh also discuss a lower bound as part of their heuristic. However, the lifting condition in the pushforward discriminant makes it slightly larger than the general invariants considered by Ellenberg and Venkatesh. We cautiously expect that the lower bound $\gg_{\epsilon} X^{1/a(G-T) - \epsilon}$ should hold for each positive $\epsilon$, as the inequality $\nu_\fp(q_*\disc \pi) \ge \min_{\pi(I_p) = \langle gT\rangle} \ind(g)$ is an equality for a positive proportion of places  (namely, those congruent to $1$ mod $|G|$). For the purposes of this paper, we only require upper bounds.
	\end{remark}

\subsection{Imprimitive Extensions}
	Given a finite permutation group $G$, with subgroup $S=\Stab_G(1)$,
	a $G$-extension $K/k$ has a proper, non-trivial intermediate extension $L$ if and only if the is a subgroup $S'$ such that $S\lneq S' \lneq G.$  In this case,
	we can let $T=\cap_{g\in G} gS'g^{-1}$,  give $G/T$ the permutation action of left multiplication on the left cosets of $S'$, and $L/k$ is a $G/T$-extension.
	Let $q: G\ra G/T$.  Then we can compare $q_*\disc$ to the discriminant of a $G/T$-extension.

	\begin{proposition}\label{prop:imprim_beta}
	Let $k$ be a number field.
		Let $G,S,S',T,q$ be as just above.  Let $n=[G:S]$ and $m=[G:S']$.
		Then for all $X>0$,
		\[
			\Sur_{q_*\disc}(G_k,G/T;X) \subseteq \Sur(G_k,G/T;X^{\frac{m}{n}}),
		\]
		where $G/T$ is viewed as the permutation group in degree $m$, so the right-hand side is ordered with respect to $\disc_{G/T}$.
	\end{proposition}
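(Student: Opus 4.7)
The plan is to establish the required containment one prime at a time via the tower formula for discriminants. Fix a prime $\fp$ of $k$ and suppose $\pi_\fp \in \Hom(G_{k_\fp},G/T)$ admits at least one lift $\psi_\fp \in \Hom(G_{k_\fp},G)$ (if not, then $q_*f_\fp(\pi_\fp) = \infty$ and the local inequality below is vacuous). Associated to $\psi_\fp$ is a rank-$n$ \'etale $k_\fp$-algebra $K_\fp$, namely the fixed algebra of $\psi_\fp^{-1}(S)$, and associated to $\pi_\fp$ is a rank-$m$ \'etale $k_\fp$-algebra $L_\fp$, namely the fixed algebra of $\pi_\fp^{-1}(S'/T)$. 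Because $T \le S' $ and $\pi_\fp = q \circ \psi_\fp$, the inclusion $\psi_\fp^{-1}(S) \le \psi_\fp^{-1}(S') = \pi_\fp^{-1}(S'/T)$ realizes $K_\fp$ as a rank-$n/m$ \'etale $L_\fp$-algebra.

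The tower formula for discriminants of \'etale algebras then gives
\[
\disc(K_\fp/k_\fp) \;=\; \disc(L_\fp/k_\fp)^{\,n/m} \cdot N_{L_\fp/k_\fp}\bigl(\disc(K_\fp/L_\fp)\bigr).
\]
Since the second factor has nonnegative $\fp$-adic valuation, this yields
\[
f_\fp(\psi_\fp) \;\ge\; \tfrac{n}{m}\, f_\fp(\pi_\fp),
\]
where the left-hand side is the local conductor exponent of $\psi_\fp$ in the degree-$n$ permutation representation of $G$ and the right-hand side is the local conductor exponent of $\pi_\fp$ in the degree-$m$ permutation representation of $G/T$ on cosets of $S'/T$. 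Taking the minimum over all lifts $\psi_\fp$ gives $q_*f_\fp(\pi_\fp) \ge \tfrac{n}{m}\, f_\fp(\pi_\fp)$ for every prime $\fp$.

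Multiplying $N(\fp)$ to these inequalities and taking the product over all primes converts the statement into
\[
|q_*\disc(\pi)| \;\ge\; |\disc_{G/T}(\pi)|^{\,n/m}.
\]
Thus $|q_*\disc(\pi)| \le X$ forces $|\disc_{G/T}(\pi)| \le X^{m/n}$, which is exactly the claimed containment.

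The main obstacle is sorting out the \'etale-algebra setup at places where $\psi_\fp$ is not surjective so that the tower formula applies cleanly; once one verifies that $K_\fp / L_\fp / k_\fp$ really does form a tower of \'etale algebras of the indicated ranks coming from the chain of stabilizers $\psi_\fp^{-1}(S) \le \psi_\fp^{-1}(S') \le G_{k_\fp}$, the discriminant computation is routine and the passage from local to global is immediate from the product formula definition of $q_*\disc$ in \eqref{eq:pushforward_discriminant}.
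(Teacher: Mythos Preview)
Your proof is correct and follows essentially the same approach as the paper: both argue prime-by-prime using the tower of \'etale algebras $K_\fp/L_\fp/k_\fp$ coming from the chain $\psi_\fp^{-1}(S) \le \psi_\fp^{-1}(S') \le G_{k_\fp}$ and extract the inequality $f_\fp(\psi_\fp) \ge \tfrac{n}{m} f_\fp(\pi_\fp)$ from the tower formula for discriminants. The paper is slightly terser, stating the inequality $|\disc(L_{\psi_\fp})| \ge |\disc(F_\fp/k_\fp)|^{[S':S]}$ directly rather than writing out the full tower formula, but the content is the same.
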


	\begin{proof}
		Let $F$ be the field fixed by $\pi^{-1}(S')$. By the definition of $q_*\disc$ \eqref{eq:pushforward_discriminant}, it follows that for each prime ideal $\fp$ of $k$
		\[
			|\fp^{\nu_\fp(q_*\disc \pi)}| = \min_{\substack{\psi_{\fp} \in\Hom(G_{k_{\fp}},G)\\q\circ \psi_{\fp} = \pi_\fp}} |\disc(\psi_\fp)|.
		\]
		Let $L_{\psi_{\fp}}/k_{\fp}$ be the $G$-\'etale algebra corresponding to $\psi_\fp$. Notice that the subalgebra $L_{\psi_{\fp}}^{S'}$ fixed by $S'$ is necessarily the localization $F_{\fp}$. Then the proposition follows from
		\begin{equation*}
			|\disc(L_{\psi_\fp})|\ge |\disc(F_{\fp}/k_{\fp})^{[S':S]}|.
		\end{equation*}
		Thus, $|q_*\disc(\pi)| \ge |\disc(F/k)|^{[S':S]} = |\disc_{G/T}(\pi)|^{n/m}$ and the proposition follows.
	\end{proof}

\subsection{Corollary \ref{cor:main_abelian_on_top_imprimitive} follows from Theorem \ref{thm:main_abelian_on_top} and Corollary \ref{cor:inductive_H1ur_bounds}}\label{subsec:proving_abelian_on_top_imprimitive}
	Let $G$ be an imprimitive permutation group with tower type $(A,B)$, that is $G\subseteq A\wr B$ with $G$ surjecting onto $B$ and $A^m\cap G$ surjecting onto $A$ through each projection map. Suppose that
	\[
		\sum_{F\in \mathcal{F}_{m,k}(B;X)} |\Hom(\Cl_F,A)| \ll X^{\theta}.
	\]

	Let $S=\Stab_G(1)$, and $S'$ be the preimage of $\Stab_B(1)$ in $G$.
	Then let $T=A^m \cap G$, so $T=\cap_{g\in G} gS'g^{-1}.$
	We apply Proposition~\ref{prop:imprim_beta} to show that
	\[
		q_*\Sur(G_k,G;X) \subseteq \Sur(G_k,G/T; cX^{1/|A|})
	\]
	for some constant $c>0$ depending only on $[k:\Q]$ and $n$.

	The subgroup $A^m\le A\wr B$ carries the induced module structure by definition, that is
	\[
		A\wr B = {\rm Ind}_1^B(A) \rtimes B.
	\]
	Given any $F\in \mathcal{F}_{m,k}(B;X)$ corresponding to some $\pi\in \Sur(G_k,B)$, we then necessarily have and isomorphism of $G_k$-modules
	\[
		{\rm Ind}_1^B(A)(\pi) = {\rm Ind}_F^k(A),
	\]
	where $A$ carries the trivial $G_F$-action. We now see that our choice of $T = A^m\cap G = {\rm Ind}_1^B(A) \cap G$ necessarily admits an embedding $T(\pi) \hookrightarrow {\rm Ind}_F^k(A)$ as $G_k$-modules.  
	Corollary \ref{cor:inductive_H1ur_bounds}(iii) then  gives
	\[
		|H^1_{ur}(k,T(\pi))| \ll_{m,|A|} |\Hom(\Cl_F,A)|.
	\]
	Putting these together, we find that
	\begin{align*}
		\sum_{\pi\in q_*\Sur(G_k,G;X)}|H^1_{ur}(k,T(\pi))| &\ll_{m,|A|} \sum_{\pi\in \Sur(G_k,G/T; cX^{1/|A|})}|\Hom(\Cl_F,A)| \\
		&\ll_{m,|A|,k} X^{\theta/|A|}.
	\end{align*}

	Clearly $\theta < \frac{|A|}{a(A^m\cap G)}$ if and only if $\theta/|A| < 1/a(A^m\cap G) = 1/a(T)$, so the conclusions of Corollary \ref{cor:main_abelian_on_top_imprimitive} follow directly from the conclusions of Theorem \ref{thm:main_abelian_on_top}. 

\section{Groups concentrated in an Abelian Subgroup}\label{sec:abelian_on_top}
	When $T$ is an abelian group, Conjecture \ref{conj:twisted_number_field_counting} has been completely solved by Alberts and O'Dorney \cite{alberts-odorney2021}.
	In this section, we prove the upper bound of Conjecture \ref{conj:twisted_number_field_counting} for abelian $T$ \emph{with enough uniformity} for our desired applications.

	\begin{theorem}\label{thm:uniformity}
	Let $G$ be a transitive subgroup of degree $n$, $T\normal G$ an abelian normal subgroup with quotient map $q:G\to G/T$, and $\pi\in q_*\Sur(G_k,G)$. Then
	\[
	\#\{\psi\in q_*^{-1}(\pi) : |\disc_G(\psi)|\le X\} = O_{n,[k:\Q],\epsilon}\!\left(\frac{|H^1_{ur}(k,T(\pi))|}{(q_*\disc_G(\pi))^{1/a(T)-\epsilon}} X^{1/a(T)}(\log X)^{b(k,T(\pi))-1}\right)\!.
	\]
	\end{theorem}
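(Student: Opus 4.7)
The plan is to derive the required upper bound from the proof of the Alberts--O'Dorney twisted number field counting theorem \cite{alberts-odorney2021}, while tracking the uniformity in $\pi$ explicitly. First I would fix any lift $\tilde\pi \in q_*^{-1}(\pi)$, which exists because $\pi \in q_*\Sur(G_k,G)$, and invoke the bijection of \cite[Lemma 1.3]{alberts2021}: under $\psi \leftrightarrow \psi\tilde\pi^{-1}$, the fiber $q_*^{-1}(\pi)$ is identified with the subset of crossed homomorphisms $f \in Z^1(G_k, T(\tilde\pi))$ for which $\tilde\pi \cdot f$ is surjective onto $G$. Since $T$ is abelian, $T(\tilde\pi) = T(\pi)$ depends only on $\pi$, so the fiber count becomes a count of crossed homomorphisms in a fixed Galois module, weighted by an appropriate discriminant.

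Second, I would stratify the count by local ramification behavior. At each prime $\fp$ of $k$, the local Artin conductor $f_\fp(\tilde\pi \cdot f)$ depends only on the restrictions of $\tilde\pi$ and $f$ to $G_{k_\fp}$, and it is bounded below by the pushforward exponent $q_* f_\fp(\pi_\fp)$ by construction \eqref{eq:pushforward_discriminant}. Factoring out this forced local contribution from every $\psi$ rescales the effective counting budget from $X$ to $X / |q_*\disc_G(\pi)|$, which after raising to the $1/a(T)$ power produces the factor $|q_*\disc_G(\pi)|^{-1/a(T)}$ in the statement, with the $\epsilon$ absorbing ramified Euler factors at the finitely many places of $k$ above primes dividing $|T|$.

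Third, to control the remaining global count, I would apply the strategy of \cite{alberts-odorney2021}: express the generating series for the count as an adelic sum, use Fourier analysis / Poisson summation on the finite local cohomology groups $H^1(k_\fp, T(\pi))$ together with Poitou--Tate duality to isolate the main term, and apply a Tauberian theorem to extract the asymptotic. The trivial character contributes an Euler product whose local factors, summed over conjugacy classes of minimal-index tame inertia images, yield the growth $X^{1/a(T)}(\log X)^{b(k,T(\pi))-1}$; the kernel in the relevant Poitou--Tate exact sequence parametrizes the unramified twists and therefore contributes exactly the factor $|H^1_{ur}(k, T(\pi))|$. Nontrivial characters contribute only lower-order error.

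The main obstacle is that Alberts--O'Dorney state their asymptotic for a fixed Galois module, so their implicit constants a priori depend on the $G_k$-module structure of $T(\pi)$. The key uniformity observation is that the $G_k$-action on $T$ factors through a subgroup of $\Aut(T)$, whose order is bounded in terms of $n$; consequently all local densities at primes not dividing $|T|\cdot\disc(k)$, the Euler product constants, and the Tauberian input admit bounds depending only on $n$ and $[k:\Q]$. The genuine $\pi$-dependence is then isolated in exactly the three invariants appearing in the bound, and absorbing the finitely many "bad" local contributions into $|q_*\disc_G(\pi)|^{\epsilon}$ completes the argument.
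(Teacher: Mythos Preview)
Your proposal is essentially the paper's approach: the bijection with crossed homomorphisms from \cite{alberts2021}, the Poisson/Fourier expansion of \cite{alberts-odorney2021}, extraction of the forced local factor $q_*\disc_G(\pi)$ (which the paper packages as the finite Euler product $Q(T,\pi;s)$ in Lemma~\ref{lem:MBfactors}), and a Tauberian argument with uniformity coming from $|\Aut(T)|=O_n(1)$. One small correction to your third paragraph: the factor $|H^1_{ur}(k,T(\pi))|$ does not enter because nontrivial dual characters are lower order---rather, each dual term is bounded in absolute value by the trivial one, the dual sum is \emph{finitely} supported on $H^1_{ur^*}(k,T(\pi)^*)$, and the Greenberg--Wiles identity converts $|H^1_{ur^*}|$ into $|H^1_{ur}|$ up to factors bounded by $|T[2]|^{[k:\Q]}$.
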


	Together with Theorem \ref{thm:main_pointwise}, this will be sufficient to prove Theorem \ref{thm:main_abelian_on_top}.

	\begin{proof}[Proof of Theorem \ref{thm:main_abelian_on_top}]
		Let $T \normal G$ be an abelian normal subgroup for which the hypotheses of Theorem \ref{thm:main_abelian_on_top} is satisfied. We will prove that the hypotheses of Theorem \ref{thm:main_pointwise} are also satisfied with $a = a(T)$ and $b = \max_\pi b(k,T(\pi))$.

		Alberts gave a bijection between the fiber and ``surjective corssed homomorphisms valued in the Galois module $T(\pi)$ in \cite[Lemma 1.3]{alberts2021}, and further proves that this respects the coboundary relation in \cite[Lemma 3.5]{alberts2021} to conclude that
		\begin{align*}
			&\#\{\psi\in q_*^{-1}(\pi) : |\disc_G(\psi)|\le X\}\\
			&= |T(\pi)/T(\pi)^G| \cdot \#\{[f]\in H^1(k,T(\pi)) : f*\pi\text{ surjective},\ |\disc_G(f*\pi)|\le X\}.
		\end{align*}
		The asymptotic growth rate $c X^{1/a(T)}(\log X)^{b(k,T(\pi)) - 1}$ for this function is given directly by \cite[Theorem 1.1 and Corollary 1.2]{alberts-odorney2021} for $T$ abelian with no local restrictions. We remark that, for any $\pi$ with $b(k,T(\pi)) < b$, we may take $c(\pi) = 0$ so that Theorem \ref{thm:main_pointwise}(1) is verified for each fiber, even if that fiber does not contribute a positive proportion of extensions.

		\begin{remark}
			\cite{alberts-odorney2021} was originally published with an error in the main theorem, which has been corrected in the Corrigendum \cite{alberts-odorney2023}. This error applied to local restrictions - in certain cases (generalizing the Grunwald-Wang counterexample), the generating Dirichlet series cancels out completely and there are no elements of $H^1(k,T(\pi))$ satisfying that family of local conditions.

			In our setting, we are not considering any local restrictions whatsoever, which is equivalent to taking $L_{\fp} = H^1(k_{\fp},T(\pi))$ for all places ${\fp}$. This is a viable family of local restrictions in the sense of \cite{wood2009,alberts-odorney2023}, as certainly the trivial class $0$ satisfies these local conditions. For this reason, the results we are using from \cite{alberts-odorney2021} are correct as stated in the original publication.
		\end{remark}

		Theorem \ref{thm:main_pointwise}(2) follows from Theorem \ref{thm:uniformity}, with
		\[
			f(\pi) \ll_{n,[k:\Q],\epsilon} \frac{|H^1_{ur}(k,T(\pi))|}{(q_*\disc_G(\pi))^{1/a(T)-\epsilon}}
		\]
		if $\pi\in q_*\Sur(G_k,G)$, and $f(\pi) = 0$ if $\pi\not\in q_*\Sur(G_k,G)$ (as the fiber is empty in this case).

		Recall that we have assumed there is some $\theta \geq 0$ so that
			\[
				\sum_{\pi \in q_* \Sur(G_k,G;X)} |H^1_{ur}(k,T(\pi))|
					\ll_{n,k} X^{\theta}.
			\]
		From this, we find that
			\[
				\sum_{\pi \in q_* \Sur(G_k,G;X)} f(\pi)
					\ll_{n,k,\epsilon} \sum_{\pi \in q_* \Sur(G_k,G;X)} \frac{|H^1_{ur}(k,T(\pi))|}{(q_*\disc_G(\pi))^{1/a(T)-\epsilon}}
					\ll_{n,k,\epsilon} 1 + X^{\theta - 1/a(T) + \epsilon}.
			\]
		In particular, the criterion for convergence (Theorem~\ref{thm:main_pointwise}(3)) holds if $\theta < \frac{1}{a(T)}$.  Thus, we may apply Theorem~\ref{thm:main_pointwise} in this case, which yields Theorem~\ref{thm:main_abelian_on_top}(i).  
		
	If $\theta \ge 1/a(T)$, we bound the sum of the fibers directly as
	\begin{align*}
		\#\Sur(G_k,G;X) 
		&= \sum_{\pi\in q_*\Sur(G_k,G;X)} \#\{\psi\in q_*^{-1}(\pi) : |\disc_G(\psi)|\le X\}\\
		&\ll_{n,k,\epsilon} \sum_{\pi\in q_*\Sur(G_k,G;X)} f(\pi) X^{1/a(T)+\epsilon}\\
		&\ll_{n,k,\epsilon} X^{\theta+\epsilon},
	\end{align*}
	proving Theorem \ref{thm:main_abelian_on_top}(ii).
	\end{proof}

	The remainder of this section is dedicated to proving Theorem \ref{thm:uniformity}.

\subsection{Bounding by local factors}\label{subsec:upperboundMB}

	We will use the cohomological framework of \cite{alberts-odorney2021} to access the fibers, and bound them in terms of the Euler product of local factors
	\begin{align}\label{eq:MBseries}
		{\rm MB}_k(T,\pi;s) = \prod_{\fp} \frac{1}{|T|}\left(\sum_{\psi_{\fp}\in q_*^{-1}(\pi|_{G_{k_{\fp}}})} |\disc_G(\psi_{\fp})|^{-s}\right),
	\end{align}
	where $\disc_G(\psi_{\fp})$ is the discriminant of the $G$-\'etale algebra corresponding to $\psi_{\fp}$ over $k_{\fp}$, and the product is over all finite and infinite places of $k$. This is an analog to the Malle--Bhargava local series \cite{Bhargava2010, Wood2016}, and is equivalent to the Euler product appearing in \cite[Theorem 3.3]{alberts2021}.

	\begin{lemma}\label{lem:upperboundMBseries}
		Let $G$ be a transitive subgroup of degree $n$, $T\normal G$ an abelian normal subgroup with quotient map $q:G\to G/T$, and $\pi\in q_*\Sur(G_k,G)$.
		
		Let $\{a_m\}$ be the Dirichlet coefficients of ${\rm MB}_k(T,\pi;s)$, that is ${\rm MB}_k(T,\pi;s) = \sum a_m m^{-s}$. Then
		\[
			\#\{\psi\in q_*^{-1}(\pi) : |\disc_G(\psi)|\le X\} \le |H^1_{ur}(k,T(\pi))|\cdot |T[2]|^n\cdot \sum_{m\le X} a_m.
		\]
	\end{lemma}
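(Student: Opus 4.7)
The plan is to combine Alberts' cohomological framework \cite{alberts2021} with a local-to-global analysis, in which the factor $|H^1_{ur}(k,T(\pi))|$ arises from the kernel of a restriction map, the Malle--Bhargava coefficients $a_m$ encode local ramification data, and $|T[2]|^n$ absorbs the contribution from archimedean places. First I would apply \cite[Lemma 1.3]{alberts2021} together with \cite[Lemma 3.5]{alberts2021} to identify $q_*^{-1}(\pi)$ with a $Z^1(G_k,T(\pi))$-torsor in which cocycle representatives of a given cohomology class produce homomorphisms conjugate by an element of $T$, and therefore have equal local Artin conductors. Consequently the discriminant is a class function on $H^1(k,T(\pi))$, and the count we wish to bound can be compared with the number of classes $[f]\in H^1(k,T(\pi))$ satisfying $|\disc_G(f*\pi)|\le X$, after dropping the surjectivity requirement on $f*\pi$.

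Next I would decompose this count along the restriction map $H^1(k,T(\pi)) \to \prod_\fp H^1(k_\fp,T(\pi))/H^1_{ur}(k_\fp,T(\pi))$, whose kernel is precisely $H^1_{ur}(k,T(\pi))$ by definition. At each nonarchimedean place $\fp$, the local conductor of $f*\pi$ depends only on the image of the local class in $H^1(k_\fp,T(\pi))/H^1_{ur}(k_\fp,T(\pi))$, since modifying a local cocycle by an unramified class does not change the inertial restriction. Therefore every fiber of the restriction map has size at most $|H^1_{ur}(k,T(\pi))|$, producing that factor in the final bound. Using the identity $|Z^1(k_\fp,T(\pi))|=|T|\cdot|H^1(k_\fp,T(\pi))|/|H^0(k_\fp,T(\pi))|$ together with the fact that cocycles within a single class give conjugate homomorphisms of equal conductor, the remaining count over local cohomology classes with prescribed conductor converts into a sum over local continuous homomorphisms carrying the $1/|T|$ normalization, which is exactly what is needed to recover the Dirichlet coefficients $a_m$ of ${\rm MB}_k(T,\pi;s)$. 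Archimedean places are handled separately: at each real place, continuous lifts $G_\R\to G$ of $\pi_\infty$ are counted by elements $t\in G$ of order dividing $2$ lying in a prescribed $T$-coset, of which there are at most $|T[2]|$ since $T$ is abelian, and the exponent $n$ accommodates all such places uniformly.

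The main obstacle will be the careful bookkeeping of the cohomological factors in the local-to-global step, in particular ensuring that the quotients $|H^0(k_\fp,T(\pi))|$ arising from the difference between cocycles and continuous homomorphisms cancel cleanly against the $1/|T|$ normalizations in the Malle--Bhargava Euler product, so that only the clean product $|H^1_{ur}(k,T(\pi))|\cdot|T[2]|^n$ survives as the global multiplicative constant in front of $\sum_{m\le X}a_m$, without extraneous factors depending on the specific locus of ramification.
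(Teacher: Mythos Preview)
Your approach is genuinely different from the paper's and is more elementary. The paper does not use the restriction map directly; instead it applies Poisson summation on $H^1(k,T(\pi))$ (from \cite{alberts-odorney2021}) to write the generating Dirichlet series as a finite sum of twisted Euler products indexed by the dual Selmer group $H^1_{ur^*}(k,T(\pi)^*)$, bounds each twist by the untwisted term (which is ${\rm MB}_k(T,\pi;s)$), and then invokes the Greenberg--Wiles identity to convert $|H^1_{ur^*}(k,T(\pi)^*)|$ into $|H^1_{ur}(k,T(\pi))|$ times a product of local factors supported at infinity. Your route---fibering over $\prod_\fp H^1(k_\fp,T(\pi))/H^1_{ur}(k_\fp,T(\pi))$ and bounding the image by the full local product---bypasses both Poisson summation and duality entirely. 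The key local identity you need, that $|H^1_{ur}(k_\fp,T(\pi))|=|H^0(k_\fp,T(\pi))|$ at every finite place, is exactly what makes the $1/|T|$ normalizations collapse against the passage from cocycles to classes to $H^1/H^1_{ur}$; this is the same cancellation that the paper extracts from Greenberg--Wiles, just seen from the other side.

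One point to tighten: your description of the archimedean contribution is not quite right. You phrase it as bounding the number of local lifts $|q_*^{-1}(\pi_\fp)|$ by $|T[2]|$, but that quantity sits in the \emph{numerator} of the MB factor, so bounding it above goes the wrong way. What actually survives after your finite-place cancellation is the factor $\prod_{\fp\mid\infty}|H^0(k_\fp,T(\pi))|$: at each archimedean place your count of local classes is $|H^1(k_\fp,T(\pi))|$, while the MB factor there equals $|H^1(k_\fp,T(\pi))|/|H^0(k_\fp,T(\pi))|$, leaving $|H^0(k_\fp,T(\pi))|$ as the discrepancy. (This is literally the same residual factor the paper obtains from Greenberg--Wiles.) At a complex place this is $|T|$, not $|T[2]|$, so the clean bound $|T[2]|^n$ is optimistic; the honest bound is $\prod_{\fp\mid\infty}|H^0(k_\fp,T(\pi))|\le |T|^{[k:\Q]}$, which is still $O_{n,[k:\Q]}(1)$ and suffices for every downstream application.
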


	\begin{proof}
	Alberts gave a bijection between the fiber and a certain set of crossed homomorphisms in \cite[Lemma 1.3]{alberts2021}. Any nonempty fiber $q_*^{-1}(\pi)$ containing an element $\widetilde{\pi}$ is parametrized by crossed homomorphisms $Z^1(k,T(\widetilde{\pi}))$ valued in the Galois module $T(\widetilde{\pi})$ with action $x.t = \widetilde{\pi}(x) t \widetilde{\pi}(x)^{-1}$. Alberts used this to define a twisted version of the number field counting function predicting the asymptotic growth rate of the fibers via the set
	\begin{align*}
		\Sur(G_k,T,\widetilde{\pi};X):=\{f\in Z^1(k,T(\widetilde{\pi})) : f*\widetilde{\pi}\text{ surjective,}\ |\disc_G(f*\widetilde{\pi})|\le X\}.
	\end{align*}
	Here, $(f*\widetilde{\pi})(x) = f(x)\widetilde{\pi}(x)$ is the pointwise product of these maps and is necessarily a homomorphism.  We remark that, in the case that $T$ is abelian, the module $T(\widetilde{\pi})$ depends only on $\pi$ so we will often abuse notation write $T(\pi)$. While the set itself depends on the choice of lift $\widetilde{\pi}$, this set is in bijection with the fiber $\{\psi\in q_*^{-1}(\pi) : |\disc_G(\psi)| \le X\}$ so that the cardinality is independent of the choice of lift.

	The surjectivity and discriminant conditions are shown to factor through the coboundary relation in \cite[Lemma 3.5]{alberts2021}, which implies
	\begin{align*}
		&\#\{\psi\in q_*^{-1}(\pi) : |\disc_G(\psi)|\le X\}\\
			&\quad\quad\quad\quad = |T(\pi)/T(\pi)^G| \cdot \#\{[f]\in H^1(k,T(\pi)) : f\in \Sur(G_k,T,\widetilde{\pi};X)\}\\
			&\quad\quad\quad\quad \le \#\{[f]\in H^1(k,T(\pi)) : |\disc_G(f*\pi)|\le X\}.
	\end{align*}
	Thus, it suffices to bound the counting function
	\[
	H^1(k,T,\widetilde{\pi};X) := \{[f]\in H^1(k,T(\pi)) : |\disc_G(f*\widetilde{\pi})|\le X\}.
	\]
	This is precisely the type of counting function considered by Alberts--O'Dorney in \cite{alberts-odorney2021}, with no local restrictions and admissible ordering given by $\disc_{\widetilde{\pi}}(f) = \disc(f*\widetilde{\pi})$.

	We need to use the description of the generating Dirichlet series for $H^1(k,T,\widetilde{\pi};X)$ given by \cite[Theorem 2.3]{alberts-odorney2021}, which we summarize here: Let $H^1(\mathbb{A}_k,T(\pi))$ be the restricted direct product
	\[
	\left\{([f_{\fp}])\in \prod_{\fp} H^1(k_{\fp},T(\pi)) : [f_{\fp}] \in H^1_{ur}(k_{\fp},T(\pi))\text{ for all but finitely many }{\fp}\right\}.
	\]
	\cite[Theorem 2.3]{alberts-odorney2021} uses Poisson summation to prove that, for sufficiently nice functions $w:H^1(\mathbb{A}_k,T(\pi)) \to \mathbb{C}$, it follows that
	\[
	\sum_{f\in H^1(k,T(\pi))} w(f) = \frac{|H^0(k,T(\pi))|}{|H^0(k,T(\pi)^*)|}\sum_{h\in H^1(k,T(\pi)^*)}\hat{w}(h)
	\]
	where $\hat{w}$ is the Fourier transform of $w$ with respect the to Tate pairing, for $f\in H^1(k,T(\pi))$ we take $w(f) = w( (f|_{G_{k_{\fp}}})_{\fp} )$ for $(f|_{G_{k_{\fp}}})_{\fp}\in H^1(\mathbb{A}_K,T(\pi))$, and $T(\pi)^* = \Hom(T(\pi),\mu)$ is the Tate dual module of $T(\pi)$ with values in the group of roots of unity.

	Let $w(f) = |\disc_G(f*\widetilde{\pi})|^{-s}$ for some $s\in \mathbb{C}$. Alberts--O'Dorney show that \cite[Theorem 2.3]{alberts-odorney2021} applies to this function in \cite[Proposition 4.1]{alberts-odorney2021}. This function is multiplicative in the sense of \cite[Definition 3.1]{alberts-odorney2021}, which implies its Fourier transforms are Euler products. More precisely, the Fourier transforms are given by
	\[
		\hat{w}(h) = \prod_{\fp}\left(\frac{1}{|H^0(k,T(\pi))|}\sum_{[f]\in H^1(k_{\fp},T(\pi))}\langle f,h_{\fp}\rangle |\disc_G(f*\widetilde{\pi}_{\fp})|^{-s}\right)
	\]
	for each $h\in H^1(k,T(\pi)^*)$ with $h_{\fp} = h|_{G_{k_{\fp}}}$, $\widetilde{\pi}_{\fp} = \widetilde{\pi}|_{G_{k_{\fp}}}$, and
	\[
		\langle,\rangle : H^1(k_{\fp},T(\pi)) \times H^1(k_{\fp},T(\pi)^*) \to \mu
	\]
	the local Tate pairing.

	Moreover, this $w$ function is periodic with respect to the unramified coclasses by \cite[Proposition 4.1]{alberts-odorney2021}, which implies its Fourier transform has finite support
	\[
		H^1_{ur^*}(k,T(\pi)^*) = H^1(k,T(\pi)^*) \cap \prod_{\fp} H^1_{ur}(k_{\fp},T(\pi))^{\perp},
	\]
	the annihilator of the unramified coclasses in $H^1(k,T(\pi)^*)$ under the Tate pairing.

	All together, these facts give a concrete description of the generating series
	\[
	\sum_{f\in H^1(k,T(\pi))} |\disc_G(f*\widetilde{\pi})|^{-s} = \frac{|H^0(k,T(\pi))|}{|H^0(k,T(\pi)^*)|} \sum_{h\in H^1_{ur^*}(k,T(\pi)^*)} \hat{w}(h),
	\]
	with
	\[
	\hat{w}(h) = \prod_{\fp} \left(\frac{1}{|H^0(k,T(\pi))|}\sum_{[f]\in H^1(k_{\fp},T(\pi))}\langle f,h_{\fp}\rangle |\disc_G(f*\widetilde{\pi}_{\fp})|^{-s}\right).
	\]

	Let $\{a_m(h)\}$ be the Dirichlet coefficients for $\hat{w}(h)$, so that $\hat{w}(h) = \sum a_m(h) m^{-s}$ and
	\[
		\#H^1(k,T,\widetilde{\pi};X) = \frac{|H^0(k,T(\pi))|}{|H^0(k,T(\pi)^*)|} \sum_{h\in H^1_{ur^*}(k,T(\pi)^*)} \sum_{m\le X} a_m(h).
	\]
	The Tate pairing is valued in roots of unity, so in particular $|\langle f,h\rangle| = 1 = \langle f, 0\rangle$ for any $f,h$. This directly implies that the coefficients satisfy $|a_m(h)| \le a_m(0)$. Moreover, \cite[Proposition 3.6]{alberts2021}(ii) together with the bijection between crossed homomorphisms and fibers given by \cite[Lemma 1.3]{alberts2021} implies that $\hat{w}(0) = {\rm MB}_k(T,\pi;s)$, so that $a_m(0) = a_m$.

	Thus, we have shown
	\[
		\#H^1(k,T,\widetilde{\pi};X) \le \frac{|H^0(k,T(\pi))|}{|H^0(k,T(\pi)^*)|} |H^1_{ur^*}(k,T(\pi)^*)| \sum_{m\le X} a_m.
	\]

	Finally, we apply the the Greenberg--Wiles identity \cite[Theorem (8.7.9)]{neukirch-schmidt-wingberg2013cohomology} to control the size of the dual Selmer group $H^1_{ur^*}(k,T(\pi)^*)$. This identity states that the dual Selmer group is related to the usual Selmer group by
	\[
		\frac{|H^1_{ur}(k,T(\pi))|}{|H^1_{ur^*}(k,T(\pi)^*)|} = \frac{|H^0(k,T(\pi))|}{|H^0(k,T(\pi)^*)|}\prod_{\fp} \frac{|H^1_{ur}(k_{\fp},T(\pi))|}{|H^0(k_{\fp},T(\pi))|}.
	\]
	The product is supported only on infinite primes, of which there are at most $n$. We then conclude
	\begin{align*}
		\#H^1(k,T,\widetilde{\pi};X) &\le |H^1_{ur}(k,T(\pi))|\prod_{{\fp}\mid \infty}|H^0(k_{\fp},T(\pi))| \sum_{n\le X} a_m\\
		&\le |H^1_{ur}(k,T(\pi))| \cdot |T[2]|^n \sum_{m\le X} a_m.
	\end{align*}
	\end{proof}

\subsection{Complex Analysis}

	It now suffices to prove an upper bound for the sum of coefficients of ${\rm MB}_k(T,\pi;s)$, so that Theorem \ref{thm:uniformity} will follow from Lemma \ref{lem:upperboundMBseries}. We will do so by applying a Tauberian theorem to a smoothed sum of the coefficients, which means we need to understand the structure of ${\rm MB}_k(T,\pi;s)$ as a meromorphic function.

	It is proven in \cite[Theorem 3.3]{alberts2021} that ${\rm MB}_k(T,\pi;s)$ convergese absolutely on ${\rm Re}(s) > 1/a(T)$ and has a meromorphic continuation to an open neighborhood of ${\rm Re}(s) \ge 1/a(T)$ with a pole at $s=1/a(T)$ of order $b(k,T(\pi))$. We will need some more information in order to make the dependence on $\pi$ explicit, so we prove the following lemma constructing the meromorphic continuation.

	\begin{lemma}\label{lem:MBfactors}
	Let $G$ be a transitive subgroup of degree $n$, $T\normal G$ an abelian normal subgroup with quotient map $q:G\to G/T$, and $\pi\in q_*\Sur(G_k,G)$. Then there exist Dirichlet series $Q(T,\pi;s)$ and $G(T,\pi;s)$ and a Galois representation $\rho_{a(T)}$ for which
	\[
	{\rm MB}_k(T,\pi;s) = Q(T,\pi;s) L(a(T)s,\rho_{a(T)}) G(T,\pi;s),
	\]
	and such that
	\begin{enumerate}[(i)]
		\item For any integer $d\ge 0$, the $d^{\rm th}$ derivative of $Q(T,\pi;s)$ is bounded by
		\[
			|Q^{(d)}(T,\pi;s)| \ll_{n,d,[k:\Q],\epsilon} |q_*\disc(\pi)|^{-{\rm Re}(s) + \epsilon}
		\]
		for any $\epsilon > 0$ on the region ${\rm Re}(s) > 0$,
		\item For any integer $d\ge 0$, the $d^{\rm th}$ derivative of $G(T,\pi;s)$ is bounded by
		\[
			|G^{(d)}(T,\pi;s)| \ll_{n,d,\epsilon} 1
		\]
		on the region ${\rm Re}(s) > \frac{1}{a(T) + 1}+\epsilon$ for any $\epsilon > 0$, and
		\item For a positive integer $d>0$, the representation $\rho_d:G_k \to {\rm GL}(\mathbb{C}[A_d])$ is the permutation representation given by the Galois action on $A_d = \{t\in T : \ind(t) = d\}$ defined by $g: t \mapsto (\widetilde{\pi}(g)t\widetilde{\pi}(g)^{-1})^{\chi(g^{-1})}$ for some lift $\widetilde{\pi}\in q_*^{-1}(\pi)$.
	\end{enumerate}
	\end{lemma}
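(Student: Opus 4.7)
The plan is to factor $\mathrm{MB}_k(T,\pi;s)$ as an Euler product and partition the places of $k$ into a finite set $S$ of ``bad'' places (archimedean places, finite places of residue characteristic dividing $|T|$, and finite places dividing $q_*\disc(\pi)$) and a set of ``good'' places $S^c$. Writing $L_\fp^{\mathrm{MB}}(s)$ and $L_\fp^{\rho}(s)$ for the local factors of $\mathrm{MB}_k(T,\pi;s)$ and $L(s,\rho_{a(T)})$ at $\fp$ respectively, I would define
\[
Q(T,\pi;s) := \prod_{\fp \in S} L_\fp^{\mathrm{MB}}(s) \cdot L_\fp^{\rho}(a(T)s)^{-1}, \qquad G(T,\pi;s) := \prod_{\fp \notin S} \frac{L_\fp^{\mathrm{MB}}(s)}{L_\fp^{\rho}(a(T)s)},
\]
so that by telescoping $\mathrm{MB}_k(T,\pi;s) = Q(T,\pi;s) \cdot L(a(T)s,\rho_{a(T)}) \cdot G(T,\pi;s)$. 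Part (iii) is then immediate from the construction of $\rho_{a(T)}$ given in the lemma statement.

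For (i), observe that every $\psi_\fp \in q_*^{-1}(\pi|_{G_{k_\fp}})$ satisfies $|\disc_G(\psi_\fp)| \geq |\fp|^{q_*f_\fp(\pi_\fp)}$ by \eqref{eq:pushforward_discriminant}, and the number of such lifts is $O_n(1)$ since $G_{k_\fp}$ is topologically finitely generated by a bounded number of generators at tame and archimedean places, while residue characteristics dividing $|T|$ contribute only $O_{n,[k:\Q]}(1)$ places of $S$. Hence $|L_\fp^{\mathrm{MB}}(s)| \ll_n |\fp|^{-q_*f_\fp(\pi_\fp)\mathrm{Re}(s)}$ on $\mathrm{Re}(s) > 0$, while $L_\fp^{\rho}(a(T)s)^{-1}$ is a polynomial in $|\fp|^{-a(T)s}$ of degree $O_n(1)$ and hence $O_n(1)$ on the same region. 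Combining these estimates and using $|S| \leq \omega(q_*\disc(\pi)) + O_{n,[k:\Q]}(1)$ together with $e^{O(\omega(N))} \ll_\epsilon N^\epsilon$ yields $|Q(T,\pi;s)| \ll_{n,[k:\Q],\epsilon} |q_*\disc(\pi)|^{-\mathrm{Re}(s)+\epsilon}$. Derivatives introduce extra $\log|\fp|$ factors which by Leibniz contribute at most $O_{n,d}((\log|q_*\disc(\pi)|)^d)$, easily absorbed into $|q_*\disc(\pi)|^\epsilon$.

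For (ii), at a good prime $\fp \notin S$ I compute $L_\fp^{\mathrm{MB}}(s)$ explicitly. After picking a lift $\widetilde\pi_\fp$, the lifts $\psi_\fp \in q_*^{-1}(\pi|_{G_{k_\fp}})$ are parametrized by pairs $(t,c) \in T \times T$ via $\psi_\fp(\tau)=t$ and $\psi_\fp(\sigma)=c\widetilde\pi_\fp(\sigma)$, where $\sigma,\tau$ are the standard generators of the tame quotient of $G_{k_\fp}$ satisfying $\sigma\tau\sigma^{-1} = \tau^{q_\fp}$. Using that $T$ is abelian, the tame relation reduces to $\widetilde\pi_\fp(\sigma)t\widetilde\pi_\fp(\sigma)^{-1}=t^{q_\fp}$ independent of $c$, and the $1/|T|$ normalization cancels the free choice of $c$, yielding
\[
L_\fp^{\mathrm{MB}}(s) = \sum_{t \in T :\ \widetilde\pi_\fp(\sigma)t\widetilde\pi_\fp(\sigma)^{-1}=t^{q_\fp}} |\fp|^{-\ind(t)s}.
\]
The key identification is $\chi(\mathrm{Frob}_\fp) \equiv q_\fp \pmod{\exp(T)}$, which makes the validity condition equivalent to $t$ being fixed by the $\rho_{a(T)}$-action of $\mathrm{Frob}_\fp$. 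Thus the number of valid $t \in A_{a(T)}$ equals $\mathrm{tr}(\rho_{a(T)}(\mathrm{Frob}_\fp))$, matching the leading coefficient of $L_\fp^{\rho}(a(T)s)$, and consequently $L_\fp^{\mathrm{MB}}(s)/L_\fp^{\rho}(a(T)s) = 1 + O_n(|\fp|^{-(a(T)+1)\mathrm{Re}(s)})$. Taking logarithms, $\log G(T,\pi;s)$ and all its derivatives converge absolutely and uniformly in $\pi$ on $\mathrm{Re}(s) > \frac{1}{a(T)+1} + \epsilon$, establishing (ii).

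The main obstacle I anticipate is the uniformity in $\pi$ of the bound in (i): one must carefully balance the growth of $|S|$ with $\omega(q_*\disc(\pi))$ against the $|\fp|^{-q_*f_\fp(\pi_\fp)\mathrm{Re}(s)}$ savings, and simultaneously ensure the inverse Artin $L$-factors $L_\fp^{\rho}(a(T)s)^{-1}$ at bad primes do not accumulate uncontrollably. A secondary subtlety is that the explicit local computation in (ii) involves a choice of lift $\widetilde\pi_\fp$, but the resulting count is canonical because $T$ is abelian, which makes $\mathrm{tr}(\rho_{a(T)}(\mathrm{Frob}_\fp))$ independent of the choice.
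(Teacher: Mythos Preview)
Your proposal is correct and follows essentially the same approach as the paper: the same partition of places into a finite bad set $S$ (archimedean, residue characteristic dividing $|T|$, and ramified in $\pi$) versus the good primes, the same definitions of $Q$ and $G$ via the Artin $L$-factor correction, and the same key identification $\chi(\mathrm{Frob}_\fp)\equiv |\fp|\pmod{\exp(T)}$ to rewrite the tame relation as the fixed-point condition $\mathrm{tr}\,\rho_{a(T)}(\mathrm{Frob}_\fp)$. The paper's treatment of (i) is organized slightly differently---it explicitly separates the length of the Dirichlet polynomial, the maximum coefficient, and the minimal support (which is exactly $|q_*\disc(\pi)|$)---but this amounts to the same estimate as your combination of $|L_\fp^{\mathrm{MB}}(s)|\ll|\fp|^{-q_*f_\fp(\pi_\fp)\mathrm{Re}(s)}$ with $C^{\omega(q_*\disc(\pi))}\ll_\epsilon|q_*\disc(\pi)|^\epsilon$.
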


	The functions $Q(T,\pi;s)$ and $G(T,\pi;s)$ will be given explicitly in the proof. We chose to state the lemma in this way so that it can be more directly applied in the proof of Theorem \ref{thm:uniformity}. The proof is similar to that of \cite[Lemma 3.5]{alberts2021} and \cite[Corollary 3.3]{alberts2024}, although the detailed information we require for $Q(T,\pi;s)$ is not present in these pre-existing results.

	\begin{proof}
	We first consider the Euler factors for tame primes which are not ramified in $\pi$. The tame decomposition group at such $\fp$ has presentation
	\[
		G_{k_\fp}^{\rm tame} = \langle \tau_\fp,\Fr_\fp : \Fr_\fp \tau \Fr_\fp^{-1} \tau^{-|\fp|}\rangle.
	\]
	If $\fp\mid p$ for some rational prime $p$, local class field theory implies that the local cyclotomic character $G_{k_\fp}\to \Q_p^{\times}$ sends $\Fr_\fp\mapsto |\fp|$. Thus, for any prime $\fp\nmid |T|$ it follows that $\chi(\Fr_\fp) \equiv |\fp|\mod |T|$, so we may equivalently write
	\[
		G_{k_\fp}^{\rm tame} = \langle \tau_\fp,\Fr_\fp : \Fr_\fp \tau \Fr_\fp^{-1} \tau^{-\chi(\Fr_\fp)}\rangle.
	\]
	The Euler factors can then be written as
	\[
		\frac{1}{|T|}\sum_{\psi_\fp\in q_*^{-1}(\pi|_{G_{k_\fp}})}|\disc_G(\psi_\fp)|^{-s} = \frac{1}{|T|} \sum_{\substack{\tau,y\in G\\ y\tau y^{-1} \tau^{-\chi(\Fr_\fp)} = 1\\ \tau T = \pi(\tau_\fp)\\ yT = \pi(\Fr_\fp)}} |\fp|^{-\ind(\tau)}
	\]
	whenever $\fp\nmid |T|\infty$. The additional assumption that $\fp$ is unramified in $\pi$ implies $\pi(\tau_\fp) = 1$, so that
	\[
		\frac{1}{|T|}\sum_{\psi_\fp\in q_*^{-1}(\pi|_{G_{k_\fp}})}|\disc_G(\psi_\fp)|^{-s} = \frac{1}{|T|} \sum_{\substack{\tau\in T,\ ,y\in G\\ y\tau y^{-1} \tau^{-\chi(\Fr_\fp)} = 1\\ yT = \pi(\Fr_\fp)}} |\fp|^{-\ind(\tau)s}.
	\]
	Consider that $T$ abelian implies $y\tau y^{-1} = \widetilde{\pi}(\Fr_\fp)\tau\widetilde{\pi}(\Fr_\fp)$, as $yT = \widetilde{\pi}(\Fr_\fp)T = \pi(\Fr_\fp)$. Thus, if $\ind(\tau) = d$ then $y\tau y^{-1}\tau^{-\chi(\Fr_\fp)}=1$ if and only if $\tau = (\widetilde{\pi}(\Fr_\fp)\tau \widetilde{\pi}(\Fr_\fp)^{-1})\chi(\Fr_{\fp})$ is a fixed point of the permutation action on $A_d$. By the definition of $\rho_d$ as the permutation representation, this implies
	\begin{align*}
		\frac{1}{|T|}\sum_{\psi_\fp\in q_*^{-1}(\pi|_{G_{k_\fp}})}|\disc_G(\psi_\fp)|^{-s} &= \frac{1}{|T|} \sum_{d\ge 0}\sum_{\substack{y\in G\\yT=\pi(\Fr_\fp)}} \sum_{\substack{\tau\in A_d\\\tau\text{ fixed point}}} |\fp|^{-ds}\\
		&= \sum_{d\ge 0} {\rm tr}\rho_d(\Fr_\fp) |\fp|^{-ds}\\
		&= 1 + \sum_{d\ge 1} {\rm tr}\rho_d(\Fr_\fp) |\fp|^{-ds},
	\end{align*}
	where the last equality follows from $A_0 = \{1\}$.

	Consider that $\fp$ is ramified in $\pi$ if and only if $\fp\mid q_*\disc_G(\pi)$ by definition. We now set
	\begin{align*}
	Q(T,\pi;s) = \prod_{{\fp}\mid q_*\disc_G(\pi)|T|\infty}&\left(\frac{1}{|T|}\sum_{\psi_\fp\in q_*^{-1}(\pi|_{G_{k_\fp}})} |\disc_G(\psi_\fp)|^{-s}\right)\\
	&\cdot\det\left(I - \left(\rho_{a(T)}(\Fr_{\fp}) | \mathbb{C}[A_{a(T)}]^{I_{\fp}}\right) |{\fp}|^{-a(T)s}\right)
	\end{align*}
	and
	\[
	G(T,\pi;s) = \prod_{{\fp}\nmid q_*\disc_G(\pi)|T|\infty} \left(1 + \sum_{d\ge 1} {\rm tr} \rho_{d}(\Fr_{\fp}) |{\fp}|^{-ds}\right)\det\left(I - \rho_{a(T)}(\Fr_{\fp}) |{\fp}|^{-a(T)s}\right).
	\]
	By construction, these formally satisfy the relation
	\[
	{\rm MB}_k(T,\pi;s) = Q(T,\pi,s) L(a(T)s,\rho_{a(T)})G(T,\pi;s),
	\]
	and so this identity holds on the region of absolute convergence for ${\rm MB}_k(T,\pi;s)$ (i.e. for ${\rm Re}(s)>1/a(T)$). It now suffices to check the properties in parts (i) and (ii) (since part (iii) is just the definition for $\rho_d$).

	The Dirichlet series $Q(T,\pi;s)$ is in fact a Dirichlet polynomial, being a finite product of polynomials in $|{\fp}|^{-s}$. Writing $Q(T,\pi;s) = \sum_{\mathfrak{a}} \alpha_\pi(\mathfrak{a}) |\mathfrak{a}|^{-s}$, we immediately conclude that
	\begin{align*}
		|Q^{(d)}(T,\pi;s)| &= \left\lvert\sum_{\mathfrak{a}} \alpha_\pi(\mathfrak{a}) (-\log |\mathfrak{a}|)^d |\mathfrak{a}|^{-s}\right\rvert\\
		&\ll_{d,\epsilon} \sum_{\mathfrak{a}} |\alpha_\pi(\mathfrak{a})||\mathfrak{a}|^{-{\rm Re}(s)+\epsilon}\\
		&\ll_{d,\epsilon} \#\{\mathfrak{a} : \alpha_\pi(\mathfrak{a}) \ne 0\} \cdot \max_{\mathfrak{a}}|\alpha_\pi(\mathfrak{a})| \cdot \left(\min_{\alpha_\pi(\mathfrak{a})\ne 0} |\mathfrak{a}|\right)^{-{\rm Re}(s) + \epsilon}
	\end{align*}
	on the region ${\rm Re}(s) > 0$. Thus, it suffices to give bounds for these three factors.
	\begin{itemize}
		\item We first bound the length of the sum, i.e. the number of $\mathfrak{a}$ for which $\alpha_\pi(\mathfrak{a}) \ne 0$, by bounding the number of terms in each Euler factor. If $\fp$ is a tamely prime in $\pi$, then there are at most
		\[
			|\Hom(G_{k_{\fp}},G)|\cdot (\dim\rho_{a(T)} + 1) \le |G|^3
		\]
		terms. If $\fp\mid |T|$ is a wildly ramified prime, then there are similarly at most
		\[
			|\Hom(G_{k_{\fp}},G)|\cdot (\dim\rho_{a(T)} + 1) \le |G|^{d(G_{k_{\fp}}) + 1},
		\]
		where $d(G_{k_{\fp}})$ is the number of generators for the decomposition group at $\fp$, a number depending only on $[k:\Q]$. If $\fp\mid \infty$ is an infinite prime, then $\disc_G(\psi_\fp) = 1$ by convention, so this contributes at most to the coefficients themselves and not the number of terms. Overall, this implies
		\begin{align*}
			\#\{\mathfrak{a} : \alpha_{\pi}(\mathfrak{a}) \ne 0\} &\le \prod_{\fp\mid |T|} |G|^{d(G_{k_{\fp}}) + 1} \prod_{\substack{\fp\nmid |T|\\ \fp\mid q_*\disc(\pi)}}|G|^3\\
			&\ll_{n,[k:\Q]} |G|^{3\omega(q_*\disc(\pi))}\\
			&\ll_{n,[k:\Q],\epsilon} |q_*\disc(\pi)|^{\epsilon}.
		\end{align*}

		\item Next, we bound the values of the function $|\alpha_\pi(\mathfrak{a})|$. For each finite prime $\fp$ the coefficient of $|\disc_G(\psi_\fp)|$ is $1$, while the coefficient for $|\fp|^{-ds}$ in the determinant is bounded in absolute value by $|G|$ by $\rho_d$ a permutation representation (so any matrix in its image is a permutation matrix of dimension $|A_d|\le |G|$). Distributing implies that the coefficient is no more than $1$ times $|\Hom(G_{k_{\fp}},G)|\cdot |G|$ (an upper bound for the number of possible products of terms). In this case, we must also consider the infinite places, where we note that if $\fp\mid \infty$ then
		\[
			\left(\frac{1}{|T(\pi)|}\sum_{f\in Z^1(k_{\fp},T(\pi))} |\disc_G(f*\widetilde{\pi}|_{G_{k_{\fp}}})|^{-s}\right) = \frac{|\Hom(G_{k_{\fp}},G)|}{|T|} \le |G|
		\]
		by the infinite decomposition groups all being cyclic. Thus, we can use a similar upper bound to the last bullet point to show that
		\begin{align*}
			\max_{\mathfrak{a}} |\alpha_{\pi}(\mathfrak{a})| &\le \prod_{\fp\mid \infty} |G| \prod_{\fp\mid |T|} |G|^{d(G_{k_{\fp}}) + 1} \prod_{\substack{\fp\nmid |T|\\ \fp\mid q_*\disc(\pi)}}|G|^3\\
			&\ll_{n,\epsilon} |q_*\disc(\pi)|^{\epsilon}.
		\end{align*}

		\item Finally, we determine the smallest integer in the support of $\alpha_\pi$. This is the product of the smallest degree terms from each Euler factor. If $p$ is not ramified in $\pi$, then certainly exists an unramified lift $\psi_\fp\in q_*^{-1}(\pi|_{G_{k_\fp}})$ (because $\Gal(k_\fp^{ur}/k_\fp) = \langle \Fr_\fp\rangle$ is a free group) which satisfies $\disc_G(\psi_\fp) = 1$. In these cases the Euler factor would have a constant term. if $\fp$ is ramified in $\pi$, then the minimum degree term is given by
		\begin{align*}
			\min_{\psi_\fp\in q_*^{-1}\pi|_{G_{k_\fp}}} |\disc_G(\psi_\fp)|.
		\end{align*}
		Appealing to the definition of the pushforward discriminant \eqref{eq:pushforward_discriminant}, this is given by
		\begin{align*}
			\min_{\psi_\fp\in q_*^{-1}\pi|_{G_{k_\fp}}} |\disc_G(\psi_\fp)| & = \min_{\psi_\fp\in q_*^{-1}\pi|_{G_{k_\fp}}} |\fp|^{f_\fp(\psi_\fp)}\\
			&= |\fp|^{q_*f_\fp(\psi_\fp)}\\
			&= |\fp|^{\nu_{\fp}(q_*\disc_G(\pi)}.
		\end{align*}
		Multiplying these together, we have shown that $|q_*\disc(\pi)|=\min_{\alpha_\pi(\mathfrak{a})\ne 0} |\mathfrak{a}|$.
	\end{itemize}
	All together, we have proven that
	\[
		Q^{(d)}(T,\pi;s) \ll_{n,d,[k:\Q],\epsilon} |q_*\disc_G(\pi)|^{-{\rm Re}(s) + 3\epsilon}
	\]
	on the region ${\rm Re}(s) > 0$, so replacing $\epsilon$ with $\epsilon / 3$ concludes the proof of part (i).

	Part (ii) is proven similarly. Write $G(T,\pi;s) = \sum_{\mathfrak{a}} \beta_\pi(\mathfrak{a}) \mathfrak{a}^{-s}$. $G(T,\pi;s)$ is a product over tame places, so by the same argument as above each coefficient in the Euler product at $\fp$ is bounded above by $|G|^{3}$. Moreover, the smallest degree term appearing in this Euler product is $|\fp|^{-(a(T)+1)s}$, as the $|\fp|^{-a(T)s}$ terms cancel out after distributing. Let $f$ be the characteristic function supported on ideals of $k$ for which $\fp\mid \mathfrak{a}\Rightarrow \nu_{\fp}(\mathfrak{a}) \ge a(T) + 1$. Then
	\[
		|\beta_\pi(\mathfrak{a})| \le |G|^{3\omega(\mathfrak{a})}f(\mathfrak{a}) \ll_{n,\epsilon} f(\mathfrak{a}) |\mathfrak{a}|^{\epsilon}.
	\]
	For any $d\ge 0$, we have
	\[
		G^{(d)}(T,\pi;s) = \sum_{\mathfrak{a}} \beta_\pi(\mathfrak{a}) (-\log |\mathfrak{a}|)^d |\mathfrak{a}|^{-s}.
	\]
	The corresponding absolute series is bounded by
	\begin{align*}
		\sum_{\mathfrak{a}} |\beta_\pi(\mathfrak{a}) (\log |\mathfrak{a}|)^d |\mathfrak{a}|^{-s}| &\ll_{n,d,\epsilon} \sum_{\mathfrak{a}} f(\mathfrak{a}) |\mathfrak{a}|^{\epsilon} |\mathfrak{a}|^{\epsilon} |\mathfrak{a}|^{-{\rm Re}(s)}\\
		&= \prod_{\fp} \left(1 + \sum_{e=a(T) + 1}^{\infty} |\fp|^{e(-{\rm Re}(s)+2\epsilon)}\right),
	\end{align*}
	where converges absolutely on the region ${\rm Re}(s) > \frac{1}{a(T) + 1}$ and is independent of $\pi$.
	\end{proof}

	This is sufficient to apply a Tauberian theorem to ${\rm MB}_k(T,\pi;s)$. In order to carry the dependence on $\pi$ through this argument, we need more information about the Artin $L$-function $L(a(T)s,\rho_{a(T)})$. Luckily, permutation representations are particularly nice.

	\begin{lemma}\label{lem:permRep}
	Let $A$ be a finite set with a (left) $G$ action and $\mathbb{C}[A]$ the corresponding $G$-module. Then
	\[
	\mathbb{C}[A] = \bigoplus_{i=1}^n \mathbb{C}[G/H_i]
	\]
	for $H_i$ the sequence of stabilizers of the $G$-orbits of $A$.

	In particular, if $G = G_k$ and $\rho:G_k\to {\rm GL}(\mathbb{C}[A])$ is the corresponding permutation representation, then
	\[
		L(s,\rho) = \prod_{i=1}^n \zeta_{k_i}(s),
	\]
	where $k_i$ is the field fixed by the stabilizer $H_i\le G_k$.
	\end{lemma}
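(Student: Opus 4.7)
The statement is classical and the proof is short, so I would execute it in two brisk stages corresponding to the two sentences of the lemma.

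First I would prove the decomposition of $\mathbb{C}[A]$. Partition $A$ into its $G$-orbits $A = \bigsqcup_{i=1}^n A_i$. Since a $G$-equivariant partition of the underlying set induces a $G$-module decomposition of the free $\mathbb{C}$-vector space on that set, we immediately get $\mathbb{C}[A] = \bigoplus_{i=1}^n \mathbb{C}[A_i]$. For each orbit $A_i$, pick a basepoint $a_i \in A_i$ and let $H_i = \Stab_G(a_i)$. The orbit-stabilizer map $gH_i \mapsto g \cdot a_i$ is a $G$-equivariant bijection $G/H_i \to A_i$, which linearizes to a $G$-module isomorphism $\mathbb{C}[G/H_i] \cong \mathbb{C}[A_i]$. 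Assembling the pieces gives the claimed decomposition. (Strictly speaking $H_i$ is only well-defined up to $G$-conjugacy by the choice of basepoint, but $\mathbb{C}[G/H_i]$ depends only on the conjugacy class, so the formula is well-posed.)

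Second I would pass to Artin $L$-functions. Note that $\mathbb{C}[G_k/H_i]$ is precisely the induced representation $\mathrm{Ind}_{H_i}^{G_k}(\mathbf{1}_{H_i})$ of the trivial character of $H_i$. The inductivity of Artin $L$-functions gives
\[
L\!\left(s, \mathrm{Ind}_{H_i}^{G_k}(\mathbf{1}_{H_i})\right) = L(s, \mathbf{1}_{H_i}) = \zeta_{k_i}(s),
\]
where $k_i = \bar k^{H_i}$ is the fixed field. Combining this with the additivity of $L$-functions on direct sums, applied to the decomposition from the first paragraph, yields
\[
L(s,\rho) = \prod_{i=1}^n L\!\left(s, \mathbb{C}[G_k/H_i]\right) = \prod_{i=1}^n \zeta_{k_i}(s),
\]
as required.

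There is no real obstacle here; the only things to be mildly careful about are (a) the well-definedness of $H_i$ up to conjugacy, which does not affect $\mathbb{C}[G/H_i]$ or $\zeta_{k_i}(s)$, and (b) that in the topological setting the stabilizers $H_i \le G_k$ are closed (they are preimages under a continuous action on a finite discrete set), so the Galois correspondence applies and $k_i$ is a well-defined finite extension of $k$.
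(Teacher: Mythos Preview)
Your proof is correct and follows essentially the same approach as the paper's own proof: decompose $A$ into orbits, identify each orbit with a coset space via orbit--stabilizer, then use additivity and inductivity of Artin $L$-functions. The paper's version is nearly identical in structure and content; your added remarks on well-definedness up to conjugacy and on closedness of stabilizers are reasonable minor elaborations.
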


	\begin{proof}
	Decompose $A$ into a disjoint union of $G$-orbits $\mathcal{O}_1$,...,$\mathcal{O}_n$. Then
	\[
	\mathbb{C}[A] = \bigoplus_{i=1}^n \mathbb{C}[\mathcal{O}_i].
	\]
	For each element $x\in\mathcal{O}_i$ and $g\in G$, we necessarily have that $g.x = gh.x$ for every $h\in \Stab(x)$. If we fix a base point of each orbit $x_i$, then
	\[
	\mathbb{C}[\mathcal{O}_i] = \mathbb{C}[G/\Stab(x_i)].
	\]
	We remark that changing the base point of $\mathcal{O}_i$ only changes the stabilizer up to conjugation, and $\mathbb{C}[G/H]\cong \mathbb{C}[G/H^g]$ as $G$-modules.

	In particular, for $G = G_k$ this implies
	\[
	\rho = \sum_{i=1}^n \ind_{H_i}^{G_k}(1_{H_i}),
	\]
	where $1_{H_i}$ is the trivial representation on $H_i$ so that
	\[
	L(s,\rho) = \prod_{i=1}^n L(s,\ind_{H_i}^{G_k}(1_{H_i})).
	\]
	Artin $L$-functions are invariant under under representations, so we have proven that
	\[
	L(s,\rho) = \prod_{i=1}^n L(s,1_{H_i}) = \prod_{i=1}^n \zeta_{k_i}(s).
	\]
	\end{proof}

	Finally, we will require an analog to Lemma \ref{lem:MBfactors}(i,ii) for the Dedekind zeta function. It turns out that knowing an analogous upper bound at $s=1$ will suffice, so we prove the following lemma on the Laurent expansion.

	\begin{lemma}\label{lem:DedekinZetaLaurent}
		Let $K$ be a number field of degree $n \geq 2$.  Let $\zeta_K(s)$ denote its Dedekind zeta function, and let $c_{-1}, c_0, \dots$ be its Laurent series coefficients about $s=1$, i.e.
			\[
				\zeta_K(s)
					= \frac{c_{-1}}{s-1} + c_0 + c_1 (s-1) + \dots.
			\]
		Then for every $r \geq -1$, we have $c_r = O_{n,r}((\log |\disc(K/\Q)|)^{n+r+2})$.
	\end{lemma}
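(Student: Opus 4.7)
The plan is to treat $f(s) := (s-1)\zeta_K(s)$, which is entire and satisfies $c_r = f^{(r+1)}(1)/(r+1)!$, so Cauchy's integral formula on a circle around $s=1$ gives
\[
|c_r| \le \rho^{-(r+1)} \max_{|s-1|=\rho} |f(s)|
\]
for any $\rho > 0$. Writing $D := \max(|\disc(K/\mathbb{Q})|, 3)$, I would set $\rho = 1/\log D$ and aim to prove $\max_{|s-1|=\rho}|f(s)| \ll_n (\log D)^{n-1}$; this yields $|c_r| \ll_{n,r} (\log D)^{n+r}$, which is a fortiori the claimed bound.

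To bound $|f(s)|$ on the circle I would split it into two arcs. On the arc where $\mathrm{Re}(s) \ge 1 + \rho/2$, the Euler product yields
\[
|\zeta_K(s)| \le \zeta_K(\mathrm{Re}(s)) \le \zeta(\mathrm{Re}(s))^n \le (1+2/\rho)^n \ll_n (\log D)^n,
\]
so $|f(s)| \le \rho \cdot O_n((\log D)^n) = O_n((\log D)^{n-1})$ on this arc. On the complementary arc, I would invoke a Rademacher-style convexity bound, obtained by applying the Phragm\'en--Lindel\"of principle to the completed zeta function $\Lambda_K(s) = s(s-1)D^{s/2}\Gamma_K(s)\zeta_K(s)$ across the vertical strip $-\rho \le \mathrm{Re}(s) \le 1+\rho$. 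This gives, for $s = \sigma+it$ in the strip with $|s-1| \ge \rho/2$,
\[
|\zeta_K(s)| \ll_n \bigl(D(|t|+2)^n\bigr)^{(1+\rho-\sigma)/2} \zeta(1+\rho)^n.
\]
Since $(1+\rho-\sigma)/2 \le \rho = 1/\log D$ on the arc and $|t| \le \rho \le 1$, we get $D^{(1+\rho-\sigma)/2} \le e$ and $(|t|+2)^{n(1+\rho-\sigma)/2} \ll_n 1$, while $\zeta(1+\rho)^n \ll (\log D)^n$; thus $|\zeta_K(s)| \ll_n (\log D)^n$ and hence $|f(s)| \ll_n (\log D)^{n-1}$ on the left arc as well.

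The main obstacle is ensuring that the $D$-dependence of the convexity bound on the left arc is polylogarithmic rather than polynomial, which is why the Phragm\'en--Lindel\"of parameter must be coupled to the radius via $\delta = \rho = 1/\log D$: this keeps $D^\delta$ bounded, trading the polynomial-in-$D$ growth of a standard convexity bound for only a $(\log D)^n$ loss coming from $\zeta(1+\delta)^n$. The pole of $\zeta_K$ at $s=1$ causes no difficulty because we have multiplied through by $(s-1)$. Combining the two arcs and substituting into the Cauchy estimate completes the proof, with bounded-$D$ values of $D$ absorbed into the implicit $O_{n,r}$ constant.
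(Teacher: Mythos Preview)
Your approach is essentially the same as the paper's: apply Cauchy's integral formula on the circle $|s-1| = 1/\log D$ and control $\zeta_K$ there via a convexity bound with strip width coupled to the radius. The paper does not split the circle into arcs; it simply invokes the convexity bound on the whole circle to get $|\zeta_K(s)| \ll_n \delta^{-2}\Delta^{\delta}(\log\Delta)^n$ and then sets $\delta = 1/\log\Delta$.

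One small slip: the Rademacher-type convexity bound you quote on the left arc is missing the factor coming from the pole at $s=1$. The standard form carries a factor $\left|\tfrac{1+s}{1-s}\right|$, i.e.
\[
|\zeta_K(s)| \ll_n \left|\frac{1+s}{1-s}\right|\bigl(D(|t|+2)^n\bigr)^{(1+\rho-\sigma)/2}\zeta(1+\rho)^n,
\]
so on the circle $|s-1|=\rho$ you pick up an extra $1/\rho = \log D$, yielding $|f(s)| \ll_n (\log D)^{n}$ on the left arc rather than $(\log D)^{n-1}$. This gives $c_r \ll_{n,r} (\log D)^{n+r+1}$ instead of your claimed $(\log D)^{n+r}$, but either exponent is stronger than the lemma's $(\log D)^{n+r+2}$, so the statement is unaffected.
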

	\begin{proof}
		For convenience, set $\Delta = \max\{ |\disc(K/\Q)|, e^4 \}$.  We recall the convexity bound for $\zeta_K(s)$ in the following form.  Let $\delta$ be a real number such that $0 < \delta < \frac{1}{4}$.  Then for every $s$ with real part $\sigma$ between $-\delta$ and $1+\delta$, we have
		\[
			\left| \frac{s-1}{s+1} \zeta_K(s) \right|
				\ll_n \delta^{-1} \Delta^{\frac{1+\delta-\sigma}{2}} (1+|t|)^n (\log \Delta)^n.
		\]
		(This is standard, but see \cite[Lemma~4.1]{LemkeOliver2024}, for example.) 
		On the circle $|s-1| = \delta$, we therefore find that
			\[
				|\zeta_K(s)| 
					\ll_n \delta^{-2} \Delta^{\delta} (\log \Delta)^n.
			\]
		By the Cauchy integral formula, we then have
			\[
				c_r
					= \frac{1}{2\pi i} \oint_{|s-1| = \delta} \frac{\zeta_K(s)}{(s-1)^{r+1}}\, ds
					\ll_{n} \delta^{-2-r}\Delta^{\delta} (\log \Delta)^n. 
			\]
		In particular, choosing $\delta = 1/\log \Delta$, we find that
			\[
				c_r
					\ll_n (\log \Delta)^{n+r+2}.
			\]
	\end{proof}

\subsection{Proving Theorem \ref{thm:uniformity}}
	Lemmas \ref{lem:upperboundMBseries}, \ref{lem:MBfactors}, \ref{lem:permRep}, and \ref{lem:DedekinZetaLaurent} give us enough information to perform a contour shifting argument that keeps track of the dependence on $\pi$.

	\begin{proof}[Proof of Theorem \ref{thm:uniformity}]
	Following Lemma \ref{lem:upperboundMBseries} we have
	\[
	\#\{\psi\in q_*^{-1}(\pi) : |\disc_G(\psi)| \le X\} \ll_{n} |H^1_{ur}(k,T(\pi))| \sum_{j\le X} a_j,
	\]
	so it suffices to bound $\sum_{j\le X} a_j$.

	By Perron's formula we have for $c=1/a(T)$
	\begin{align*}
		\sum_{j \leq X} a_j &\leq \sum_{j=1}^\infty a_j e^{1-\frac{j}{X}}\\
		&= \frac{e}{2\pi i} \int_{c+\epsilon-i\infty}^{c+\epsilon+i\infty} {\rm MB}_k(T,\pi;s)\cdot \Gamma(s)\cdot X^s\,ds.
	\end{align*}
	We next shift the contour integral to ${\rm Re}(s)=c-\epsilon$. Lemma \ref{lem:MBfactors}(i,ii) when $d=0$, Lemma \ref{lem:permRep} and the convexity bound for Dedekind zeta functions together imply that on the region $c-\epsilon \le {\rm Re}(s) \le c+\epsilon$
	\[
		|{\rm MB}_k(T,\pi;s)| \ll_{n,[k:\Q],\epsilon} |s-1/a(T)|^{-b(k,T(\pi))}|q_*\disc_G(\pi)|^{-c+\epsilon}(1+|t|)^{O_n(1)}.
	\]
	The rapid decay of $\Gamma(s)$ in vertical strips then implies that
	\[
		\lim_{t\to \infty} \frac{e}{2\pi i} \int_{c-\epsilon\pm it}^{c+\epsilon\pm it} {\rm MB}_k(T,\pi;s)\cdot \Gamma(s)\cdot X^s\,ds = 0.
	\]
	Thus, the Cauchy residue theorem implies
	\begin{align*}
		\sum_{j\le X} a_j &\le \Res({\rm MB}_k(T,\pi;s) \Gamma(s) X^s)_{s = 1/a(T)} + \frac{e}{2\pi i} \int_{c-\epsilon-i\infty}^{c-\epsilon+i\infty} {\rm MB}_k(T,\pi;s)\cdot \Gamma(s)\cdot X^s\,ds.
	\end{align*}
	Once again, the rapid decay of $\Gamma(s)$ combined with the upper bounds for $|{\rm MB}_k(T,\pi;s)|$ imply that the integral is bounded by
	\begin{align*}
		\frac{e}{2\pi i} \int_{c-\epsilon-i\infty}^{c-\epsilon+i\infty} {\rm MB}_k(T,\pi;s)\cdot \Gamma(s)\cdot X^s\,ds \ll_{n,[k:\Q],\epsilon} |q_*\disc_G(\pi)|^{-1/a(T)+ \epsilon} X^{1/a(T)-\epsilon}.
	\end{align*}
	It now suffices to bound the residue. We will do this in terms of the factorization given by Lemma \ref{lem:MBfactors} and Lemma \ref{lem:permRep}
	\begin{align*}
		\Res({\rm MB}_k(T,\pi;s) \Gamma(s) X^s)_{s = 1/a(T)} &= \Res\left(Q(T,\pi;s)\prod_i \zeta_{k_i}(s) G(T,\pi;s) \Gamma(s) X^s\right)_{s = 1/a(T)},
	\end{align*}
	where $k_i$ are the fields fixed by the stabilizers of the orbits in $A_{a(T)} = \{t\in T : \ind(t) = a(T)\}$.

	We now express the residue in terms of Laurent coefficients. Let $q_r$ be the Laurent coefficients of $Q(T,\pi;s)$ at $s=1/a(T)$, $c_{r,i}$ the Laurent coefficients of $\zeta_{k_i}(s)$ at $s=1$, $g_r$ the Laurent coefficients of $G(T,\pi,s)$ and $s=1/a(T)$, and $\gamma_r$ the Laurent coefficients of $\Gamma(s)$ at $s=1/a(T)$. Then
	\begin{align*}
		&\Res({\rm MB}_k(T,\pi;s) \Gamma(s) X^s)_{s = 1/a(T)} \\
			&\quad\quad\quad\quad= \sum_{\substack{r_Q + r_G + \sum_i r_i + r_\Gamma + r_X = -1\\r_X\ge 0}} q_{r_Q} \left(\prod_i \frac{c_{r_i,i}}{a(T)}\right) g_{r_G} \gamma_{r_\Gamma} X^{1/a(T)} (\log X)^{r_X}.
	\end{align*}
	Given that $Q(T,\pi;s)$, $G(T,\pi;s)$, and $\Gamma(s)$ are holomorphic at $s=1/a(T)$, we may also restrict this sum to $r_Q,r_G,r_\Gamma \ge 0$. We also know that $c_{r,i} = 0$ if $r < -1$, so we can restrict to $r_i \ge -1$. The equation $r_Q+r_G+\sum_i r_i + r_\Gamma + r_X = -1$ together with the lower bounds $r_Q,r_G,r_\Gamma,r_X\ge 0$, $r_i\ge -1$ imply the upper bounds $r_Q,r_G,r_\Gamma,r_X,r_i\le b(k,T(\pi)) - 1$. This is because the largest possible negative contribution from the left hand side is $\sum_i -1 = -b(k,T(\pi))$.

	Bounding above by the largest term times the length of the series, this gives an upper bound of the form
	\begin{align*}
		\le \frac{b(k,T(\pi))^{b(k,T(\pi)) + 4}}{a(T)^{b(k,T(\pi))}} \max_{\substack{0\le r_Q,r_G,r_\Gamma,r_X\le b(k,T(\pi)) - 1\\-1\le r_i \le b(k,T(\pi))-1}} |q_{r_Q}| \left(\prod_i |c_{r,i}|\right) |g_{r_G}| |\gamma_{r_\Gamma}| X^{1/a(T)} (\log X)^{r_X}.
	\end{align*}
	Lemma \ref{lem:MBfactors}(i) implies that
	\[
		q_{r} \ll_{n,r,[k:\Q],\epsilon} |q_*\disc_G(\pi)|^{-1/a(T) + \epsilon},
	\]
	while Lemma \ref{lem:MBfactors}(ii) implies that
	\[
		g_r \ll_{n,r,\epsilon} 1.
	\]
	Lemma \ref{lem:DedekinZetaLaurent} states explicitly that
	\[
		c_{r,i} \ll_{[k_i:\Q],r} (\log|\disc(k_i/\Q)|)^{[k_i:\Q]+r+2}.
	\]
	We know that $k_i$ is fixed by $\ker\pi$, so in particular any prime that ramifies in $k_i$ necessarily ramifies in $\pi$, and so divides $q_*\disc_G(\pi)$. We also know $[k_i:\Q] \le |G|[k:\Q]$ is bounded in terms of $n$ (by $|G| \le n!$) and $[k:\Q]$. This implies that, after appropriately adjusting the value of $\epsilon$,
	\[
		c_{r,i} \ll_{n,[k:\Q],r,\epsilon} |\disc(k_i/\Q)|^{\epsilon(n![k:\Q]+r+2)} \ll_{n,[k:\Q],r,\epsilon} |q_*\disc_G(\pi)|^{\epsilon}.
	\]
	Lastly, $\gamma_r\ll_r 1$ because $\Gamma(s)$ is independent of all other parameters. Putting these all together gives an upper bound for the residue of the form
	\begin{align*}
		&\ll_{n,b(k,T(\pi)),[k:\Q],\epsilon} \frac{b(k,T(\pi))^{b(k,T(\pi)) + 4}}{a(T)^{b(k,T(\pi))}} \max_{0\le r_X\le b(k,T(\pi)) - 1} |q_*\disc_G(\pi)|^{-1/a(T) + 2\epsilon}X^{1/a(T)} (\log X)^{r_X}\\
		&\ll_{n,a(T),b(k,T(\pi)),[k:\Q],\epsilon} |q_*\disc_G(\pi)|^{-1/a(T) + 2\epsilon}X^{1/a(T)} (\log X)^{b(k,T(\pi))-1}.
	\end{align*}
	We know that $a(T) \le n$ and $b(k,T(\pi)) \le |T| \le n!$ are both bounded in terms of $n$. Thus, we have shown
	\[
		\Res({\rm MB}_k(T,\pi;s) \Gamma(s) X^s)_{s = 1/a(T)} \ll_{n,[k:\Q],\epsilon}|q_*\disc_G(\pi)|^{-1/a(T) + \epsilon}X^{1/a(T)} (\log X)^{b(k,T(\pi))-1},
	\]
	the same upper bound as the integral term.

	Put together, we have proven that
	\begin{align*}
		\sum_{j \leq X} a_j & \ll_{n,[k:\Q],\epsilon} |q_*\disc_G(\pi)|^{-1/a(T)+\epsilon} X^{1/a(T)}(\log X)^{b(k,T(\pi))-1}.
	\end{align*}
	Multiplying by $|H^1_{ur}(k,T(\pi))|$ gives the required bound for the fiber, concluding the proof.
	\end{proof}

\section{Examples}\label{sec:examples}

	In this section we give proofs for the examples given in the introduction. This includes a summary of the results of 
	Corollary~\ref{cor:compute}
	and the \verb^Magma^ code used to produce them, as well as proofs for the infinite families of examples following from the statements of Theorem \ref{thm:main_S3_wreath} and Theorem \ref{thm:main_abelian_on_top}.

\subsection{Computations for Groups of Degree up to $23$}

	We now describe in more detail the computation leading to Corollary~\ref{cor:compute}, which reports on the number of permutation groups of degree $\leq 23$ to which our methods apply.  We begin by creating a list of the $4953$ permutation groups of degree up to $23$, along with computing a preliminary exponent $\alpha$ for each such group $G$ so that the number of $G$-extensions is at most $O_{\epsilon}(X^{\alpha+\epsilon})$.  This exponent $\alpha$ is the smallest of the following, when applicable: if $G$ is nilpotent, the reciprocal of the index of $G$ \cite{alberts2020}; the Schmidt bound \cite{schmidt1995}; Bhargava's improvement to the Schmidt bound for primitive groups \cite{bhargava2024}; and bounds coming from the degrees of algebraically independent invariants of $G$ \cite{LO-uniform}.  We also check at this stage whether Conjecture~\ref{conj:number_field_counting} is known a priori for $G$, including for nilpotent groups subject to Corollary~\ref{cor:nilpotent}, groups of the form $S_3 \times A$, $S_4 \times A$, and $S_5 \times A$ for $A$ abelian \cite{masri_thorne_tsai_wang2020}, and $S_3$ in its degree $6$ regular representation.

	We now iterate through this list multiple times, aiming to improve the recorded bound when possible and to detect whether Conjecture~\ref{conj:number_field_counting} is now known for $G$ based on our techniques.  This process of iterative improvement has two pieces.  First, we check whether there is another faithful permutation representation of $G$ in our database, and if so, whether ``swapping'' to that representation yields stronger bounds on our given representation.  Second, we iterate over the normal abelian subgroups $G$ to see whether we obtain an improved upper bound or a proof of Conjecture~\ref{conj:number_field_counting} for $G$ by applying Theorem~\ref{thm:main_abelian_on_top}.  It is this step that is the computational bottleneck.  To simplify this computation in practice, we use Lemma~\ref{lem:H1_first_bound} to compute upper bounds on $H_\text{ur}^1(k,T(\pi))$ rather than the full inductive machinery built in Section~\ref{sec:classgrp}, and we restrict this process to solvable groups $G$.  Finally, once an updated bound on $G$-extensions has been computed, we check whether the group $S_3 \wr G$ is in our database, and if so, whether Theorem~\ref{thm:main_S3_wreath} yields stronger results for it.

	The code used to execute this computation and the resulting database are available at \cite{codeurl}.  However, because of the simplifications described above, this computation is ad hoc.  We consider it an interesting challenge, both theoretically and computationally, to expand its scope.

\subsection{Groups of degree $6$}

	The first degree that we prove new results for is $6$. We describe the known results for degree $6$ in Table \ref{tb:deg6} to showcase the smallest examples of our methods. To summarize, we prove Conjecture \ref{conj:number_field_counting} for four new groups in degree $6$ over an arbitrary base field. Our main results also prove upper bounds, which we optimized for this table. We prove the weak form of Malle's conjecture for three additional new groups in degree $6$. We do not prove any lower bounds in this paper, but we include the currently known lower bounds in the table for the sake of completeness.

	The groups labelled as $nTd$ refer to the group \verb|TransitiveGroup(n,d)| in \verb^Magma^'s database of transitive permutation groups. We also include a classical label for each group.

	\begin{table}[!htbp]
		\begin{center}
		\begin{tabular}{|c|l| p{0.85in} p{2.1in} p{1.4in} |}
			\hline
			\multicolumn{2}{|l|}{Group} & Known & Known & Reference(s) \\
			\multicolumn{2}{|l|}{} & Asymptotic & Bounds & \\\hline\hline
			\multicolumn{5}{c}{concentrated}\\\hline
			6T\ref{it:6T1}&$C_6$& $c_1X^{1/3}$ & & \cite{wright1989}\\
			6T\ref{it:6T3}& $S_3\times C_2$& $c_3 X^{1/2}$ & & \cite{masri_thorne_tsai_wang2020} over $\Q$\\
			6T\ref{it:6T4}&$A_4$& $c_4 X^{1/2}$ & & Thm \ref{thm:main_abelian_on_top}\\
			6T\ref{it:6T5}&$C_3\wr C_2$ & $c_5 X^{1/2}\!\log X$ & & Cor \ref{cor:iterated_cyclic_wreath}\\
			6T\ref{it:6T6}&$C_2\wr C_3$& $c_6 X$ & & \cite{kluners2012}\\
			6T\ref{it:6T8}&  $S_4$ & $c_8X^{1/2}$ & & Thm \ref{thm:main_abelian_on_top}\\
			6T\ref{it:6T9}& ${\rm Hol}(S_3)\cong S_3^2$ & & $X^{1/2}\!\ll\! N(X)\!\ll_{\epsilon}\! X^{1/2+\epsilon}$ & \cite{alberts2021},Thm \ref{thm:main_abelian_on_top} \\
			6T\ref{it:6T10}&  $(C_3)^2\rtimes C_4$& &$X^{1/2}\!\ll\! N(X)\!\ll_{\epsilon}\! X^{1/2+\epsilon}$ & \cite{alberts2021},Thm \ref{thm:main_abelian_on_top}\\ 
			6T\ref{it:6T11}&  $C_2\wr S_3$ & $c_{11}X$ & &\cite{kluners2012}\\
			6T\ref{it:6T13}& $S_3\wr C_2$& $c_{13}X$ & & Cor \ref{cor:S3_wreath}\\
			6T\ref{it:6T14}& $S_5$& &$X^{1/2}\ll N(X) \ll X$ & \cite{bhargava-shankar-wang2015}\\ 
			\hline
			\multicolumn{5}{c}{non-concentrated}\\\hline
			6T\ref{it:6T2}& $S_3$ & $c_2 X^{1/3}$ & & \cite{bhargava-wood2007,belabas2010discriminants}\\
			6T\ref{it:6T7}& $S_4$ & & $X^{1/2}\!\ll\! N(X) \!\ll_{\epsilon}\! X^{1/2+\epsilon}$ & \cite{alberts2021},Thm \ref{thm:main_abelian_on_top}\\ 
			6T\ref{it:6T12}& $A_5$ & & $X^{\frac{59}{1920}-\epsilon}\!\ll_{\epsilon}\! N(X) \!\ll\! X$ & \cite{pierce_turnage-butterbaugh_wood2021,bhargava-shankar-wang2015}\\ 
			6T\ref{it:6T15}& $A_6$& &$X^{\frac{359}{7200}-\epsilon}\!\ll_{\epsilon}\! N(X)\!\ll_{\epsilon}\! X^{3/2+\epsilon}$ & \cite{pierce_turnage-butterbaugh_wood2021,LO-uniform}\\
			6T\ref{it:6T16}& $S_6$& &$X^{7/10}\!\ll\! N(X)\!\ll\! X^2$ & \cite{Bhargava2022,schmidt1995}\\ 
			\hline
		\end{tabular}
		\end{center}
		\caption{Table of Degree $6$ Transitive Groups, where $N(X):=\#\mathcal{F}_{6,k}(G;X)$}
		\label{tb:deg6}
	\end{table}

	Among the concentrated groups of degree $6$, there are now only three groups for which Conjecture \ref{conj:number_field_counting} is not yet known.
	\begin{itemize}
		\item The minimal index elements of 6T9 generate a normal subgroup $A_3\rtimes S_3 \normal {\rm Hol}(S_3)$, where $S_3$ acts on the alternating group by conjugation.
		\item The minimal index elements of 6T10 generate the normal subgroup $C_3^2 \rtimes C_2\normal C_3^2 \rtimes C_4$, where the $C_2$ action on $C_3^2$ is the dihedral action.
		\item The minimal index elements of 6T14 generate the alternating group $A_5\normal S_5$.
	\end{itemize}
	The minimal index elements of these groups are not abelian, and the groups themselves are not wreath products. Nevertheless, we are able to prove the weak form of Malle's conjecture for both 6T9 and 6T10 using Theorem \ref{thm:main_abelian_on_top}.

	We are also able to prove the weak form of Malle's conjecture for the non-concentrated group 6T7. This group is abstractly isomorphic to $S_4$, and our ability to prove such a strong result is due to known results for the first moment of $2$-torsion in the class group of cubic extensions. This example demonstrates an important idea: while our methods are only able to give an asymptotic for concentrated groups, the main results of this paper still yield very good upper bounds for many non-concentrated groups. 

	We give the proofs of the new results below. For all but 6T8, 6T9, and 6T10, the proof is either a direct citation of previous references or a direct application of one of the Theorems or Corollaries in the introduction. The groups 6T9 and 6T10 are the smallest examples in which we apply our inductive techniques twice to prove the best possible upper bound.

	\begin{enumerate}[(6T1)]
		\item\label{it:6T1} $C_6$ is abelian, so Conjecture \ref{conj:number_field_counting} follows  from \cite{wright1989}.
		
		\item\label{it:6T2} $S_3$ in degree $6$, Conjecture \ref{conj:number_field_counting} was proven in \cite{bhargava-wood2007,belabas2010discriminants}.
		
		\item\label{it:6T3} $S_3\times C_2$, Conjecture \ref{conj:number_field_counting} was proven over $\Q$ in \cite{masri_thorne_tsai_wang2020}. 
		
		\item\label{it:6T4} $A_4$ in degree $6$, Conjecture \ref{conj:number_field_counting} will follow from Corollary \ref{cor:main_abelian_on_top_imprimitive}. $A_4$ in degree $6$ is an imprimitive group that is realized as a subgroup of $C_2\wr C_3$. Taking $T = C_2^3 \cap A_4 = V_4\normal A_4$, we find that $a(T) = 2$. There is only a single nontrivial conjugacy class in $T$, so $b(K,T(\pi)) = 1$ for any $\pi:G_K\to A_4/T=C_3$.

		It now suffices to find some $\theta < 1 = |C_2|/a(T)$ for which
		\[
			\sum_{F\in\mathcal{F}_{3,k}(C_3;X)} |\Hom(\Cl_F,C_2)| \ll X^{\theta}.
		\]
		We can do this by bounding the length of the sum and the summands independently. We appeal to the bound of $2$-torsion in the class group proven in \cite{bhargava_shankar_taniguchi_thorne_tsimerman_zhao_2020} to prove
		\begin{align*}
			\sum_{F\in\mathcal{F}_{3,k}(C_3;X)} |\Hom(\Cl_F,C_2)| & = \sum_{F\in\mathcal{F}_{3,k}(C_3;X)} |\Cl_F[2]|\\
			&\ll_{[k:\Q],\epsilon} \sum_{F\in\mathcal{F}_{3,k}(C_3;X)} |\disc(F/\Q)|^{\frac{1}{2}-\frac{1}{2[F:\Q]}+\epsilon}\\
			&\ll_{[k:\Q],\epsilon} X^{\frac{1}{2} - \frac{1}{6[k:\Q]}+\epsilon} \#\mathcal{F}_{3,k}(C_3;X).
		\end{align*}
		It is known that $\#\mathcal{F}_{3,k}(C_3;X)\ll X^{1/2+\epsilon}$, so that we can take $\theta = 1 - \frac{1}{6[k:\Q]}+\epsilon$. For $\epsilon$ sufficiently small it follows that $\theta < 1$, so Conjecture \ref{conj:number_field_counting} follows from Corollary \ref{cor:main_abelian_on_top_imprimitive}.

		\item\label{it:6T5} $C_3 \wr C_2$ is Kl\"uners' original counterexample to Malle's Conjecture \cite{kluners2005}. Conjecture \ref{conj:number_field_counting} is proven directly in Corollary \ref{cor:iterated_cyclic_wreath} and Corollary \ref{cor:Malle_counter_example}, although the power of $\log X$ is not explicitly computed. Theorem \ref{thm:main_abelian_on_top} shows that $b= \max_\pi b(k,C_3^2(\pi))$ where the maximum is taken over $\pi\in \Sur(G_k,C_2)$. One can directly calculate that $b(k,C_3^2(\pi)) = 1$ if $k(\zeta_3)$ is not fixed by $\ker\pi$, and $b(k,C_3^2(\pi)) = 2$ if $k(\zeta_3)$ is fixed by $\pi$. The computation is done explicitly in \cite{alberts2021} (in the example following Proposition 3.7), and is essentially the same as the computations done by Kl\"uners in \cite{kluners2005} to show that Malle's prediction is incorrect for this group.

		\item\label{it:6T6} $C_2\wr C_3$, Conjecture \ref{conj:number_field_counting} was first proven in \cite{kluners2012}. This is also a subcase of Corollary \ref{cor:iterated_cyclic_wreath}.

		\item\label{it:6T7} $S_4$ in its first degree $6$ representation has a maximal abelian normal subgroup $T = V_4$, which has $a(T) = 2$. The lower bound was proven in \cite[Corollary 1.7]{alberts2021}, and is conjecturally sharp up to logs.

		For the upper bound, we appeal to Corollary \ref{cor:main_abelian_on_top_imprimitive} as $S_4$ in this representation is realized as a subgroup of $C_2\wr S_3$ with $T=C_2^3 \cap G$. It now suffices to find some $\theta$ for which
		\[
			\sum_{F\in\mathcal{F}_{3,k}(S_3;X)} |\Hom(\Cl_F,C_2)| \ll X^{\theta}.
		\]
		This will follow from \cite[Theorem 2]{bhargava-shankar-wang2015} on the number of $S_4$-extensions with restricted local behavior. Indeed, the sum
		\[
			\sum_{F\in\mathcal{F}_{3,k}(S_3;X)} |\Cl_F[2]|
		\]
		is precisely equal to $N_{4,\Sigma}(k,X)$, where $\Sigma$ is the set of local specifications that requires $L_p/k_p$ to have no ramification of type $(12)(34)$. Thus, it follows directly from \cite[Theorem 2]{bhargava-shankar-wang2015} that we may take $\theta = 1 = |C_2|/a(T)$.

		\item\label{it:6T8} $S_4$ in its second degree $6$ representation has a maximal abelian normal subgroup $T = V_4$, which has $a(T) = 2$. There is only a single nontrivial conjugacy class in $T$, so $b(K,T(\pi)) = 1$ for any $\pi:G_K\to S_4/T=S_3$. This group is also imprimitive (realized as a subgroup of $S_3\wr C_2$), however in this case $T$ is not an imprimitive kernel. For this reason, we need to use the full strength of Theorem \ref{thm:main_abelian_on_top}.
		
		By calculating the indices of elements in \verb^Magma^, we determine that
		\begin{align*}
			\ind_6((1234)) &= 3\\
			\ind_6((123)) &= 4\\
			\ind_6((12)) &= 3\\
			\ind_6((12)(34)) &= 2
		\end{align*}
		Taking the quotient to $S_4/T = S_3$, we compute the pushforward indices as in \eqref{eq:pushforward_index} to be
		\begin{align*}
			q_*\ind_6((12)) &= 3\\
			q_*\ind_6((123)) &= 4.
		\end{align*}
		This agrees with the degree $6$ representation for $S_3$, so we conclude that $q_*\disc_{6T8}(\pi) \asymp \disc_{6T2}(\pi)$. Thus
		\[
			q_*\Sur(G_k,6T8;X) \ll \Sur(G_k,6T2;X) \ll X^{1/3}.
		\]
		The bounds for $|H^1_{ur}(k,T(\pi))|$ are entirely analogous to 6T4, so we compute
		\begin{align*}
			\sum_{\pi\in q_*\Sur(G_k,6T8;X)} |H^1_{ur}(k,T(\pi))| &\ll \sum_{\pi\in q_*\Sur(G_k,6T8;X)} |\disc_{3T2}(\pi)|^{\frac{1}{2} - \frac{1}{6[k:\Q]}+\epsilon}\\
			&\ll \sum_{\pi\in \Sur(G_k,6T2;X)} |\disc_{3T2}(\pi)|^{\frac{1}{2} - \frac{1}{6[k:\Q]}+\epsilon}\\
			&\ll \sum_{F\in \mathcal{F}_{6,k}(6T2;X)} |\disc_{3T2}(F^{\Stab_{3T2}(1)}/\Q)|^{\frac{1}{2} - \frac{1}{6[k:\Q]}+\epsilon}
		\end{align*}
		This function is given entirely by the distribution of $S_3$-extensions, using a mix of the sextic and cubic discriminants. There are likely multiple ways to bound this sum, we choose to do so by partitioning according to the quadratic resolvent of $F$. This is analogous to taking $T=C_3\normal S_3$ in Theorem \ref{thm:main_abelian_on_top}.

		Suppose $M\le F$ is the quadratic resolvent. Then comparing indices implies
		\begin{align*}
			\disc_{6T2}(F/\Q) &\asymp \disc_{3T2}(F^{\rm Stab_{3T2}(1)}/\Q)^2 \disc(M/\Q).
		\end{align*}
		Partitioning by the quadratic resolvant implies
		\begin{align*}
			&\sum_{F\in \mathcal{F}_{6,k}(6T2;X)} |\disc_{3T2}(F^{\Stab_{3T2}(1)}/\Q)|^{\frac{1}{2} - \frac{1}{6[k:\Q]}+\epsilon}\\
			&\ll X^{\frac{1}{4} - \frac{1}{12[k:\Q]}+\epsilon}\sum_{F\in \mathcal{F}_{6,k}(6T2;X)} |\disc(F^{A_3}/\Q)|^{-\frac{1}{4} + \frac{1}{12[k:\Q]}+\epsilon}.
		\end{align*}
		Let $\pi_M:G_k\to \Gal(M/k) \cong C_2$ be the (unique) surjective homomorphism corresponding to a quadratic field $M$ and let $q:S_3\to S_3/A_3 \cong C_2$ be the quotient map. Partitioning the sum with respect to the fibers of $q$ gives, up to a constant multiple,
		\[
			X^{\frac{1}{4} - \frac{1}{12[k:\Q]}+\epsilon}\sum_{M\in \mathcal{F}_{2,k}(C_2;X^{1/3})} |\disc(M/\Q)|^{-\frac{1}{4} + \frac{1}{12[k:\Q]}+\epsilon}\#\{\psi\in q_*^{-1}(\pi_M) : |\disc_{6T2}(\psi)|\le X\},
		\]
		For each fixed $M$, we can bound this uniformly in terms of Theorem \ref{thm:uniformity} to get
		\begin{align*}
			&\ll X^{\frac{1}{4} - \frac{1}{12[k:\Q]}+\epsilon}\sum_{M\in \mathcal{F}_{2,k}(C_2;X^{1/3})} |\disc(M/\Q)|^{-\frac{1}{4} + \frac{1}{12[k:\Q]}+\epsilon}\frac{|\Cl_M[3]|}{\disc(M/\Q)^{3/4-\epsilon}} X^{1/4+\epsilon},
		\end{align*}
		noting that $A_3\subseteq 6T2$ has $a(A_3) = 4$ and $q_*\disc(\pi) \asymp \disc(M/\Q)^3$. Simplifying, this is given by
		\begin{align*}
			&\ll X^{\frac{1}{2} - \frac{1}{12[k:\Q]}+\epsilon}\sum_{M\in \mathcal{F}_{2,k}(C_2;X^{1/3})} |\disc(M/\Q)|^{-1 + \frac{1}{12[k:\Q]}+\epsilon}|\Cl_M[3]|\\
			&\ll X^{\frac{1}{2} - \frac{2}{36[k:\Q]}+\epsilon}\sum_{M\in \mathcal{F}_{2,k}(C_2;X^{1/3})} |\disc(M/\Q)|^{-1}|\Cl_M[3]|.
		\end{align*}
		Datskovsky--Wright \cite{DW88} proved that the average size of $3$-torsion in class groups over relative quadratic extensions is constant. Thus, Abel summation implies
		\[
			X^{\frac{1}{2} - \frac{2}{36[k:\Q]}+\epsilon}\sum_{M\in \mathcal{F}_{2,k}(C_2;X^{1/3})} |\disc(M/\Q)|^{-1}|\Cl_M[3]| \ll X^{\frac{1}{2} - \frac{2}{36[k:\Q]}+\epsilon}\log X.
		\]
		
		All together, making $\epsilon$ slightly larger we conclude that
		\[
			\sum_{\pi\in q_*\Sur(G_k,6T8;X)} |H^1_{ur}(k,T(\pi))| \ll X^{\frac{1}{2} - \frac{1}{18[k:\Q]}+\epsilon},
		\]
		so we can take $\theta = \frac{1}{2} - \frac{1}{18[k:\Q]}+\epsilon$. For $\epsilon$ sufficiently small, we certainly have $\theta < 1/2 = 1/a(T)$ so that Conjecture \ref{conj:number_field_counting} follows from Theorem \ref{thm:main_abelian_on_top}(i).

		\item\label{it:6T9} Consider $T = C_3^2 \normal S_3^2$ in degree $6$. A \verb^Magma^ search indicates that $a(T) = 2$, so that the lower bound is proven by the first author in \cite[Corollary 1.7]{alberts2021}. This is conjecturally sharp.

		We apply Theorem \ref{thm:main_abelian_on_top} to prove the upper bound, so that it suffices to show
		\[
			\sum_{\pi\in q_*\Sur(G_k,6T9;X)}|H^1_{ur}(k,T(\pi))| \ll X^{1/2+\epsilon},
		\]
		as $\theta = 1/2+\epsilon > 1/2 = 1/a(T)$.

		By calculating the indices of all elements outside $T$, we conclude that $q_*\ind_{6T9}(g) \ge 2 = \ind_{V_4}(g)$ for all $g\in V_4\cong S_3^3/ T$. Thus, $q_*\disc_{6T9}\asymp \disc_{V_4}$ and we can prove
		\[
			q_*\Sur(G_k,6T9;X) \ll \Sur(G_k,V_4;X) \ll X^{1/2}.
		\]
		We now bound the size of the summands by taking Lemma \ref{lem:inductive_H1ur_bound} with $M$ the diagonal subgroup of $T$. This is a core free subgroup, and is normalized by the $H\le S_3^2$ the pullback of the diagonal in the quotient $\Delta \subseteq V_4 = S_3^2/T$. Thus
		\begin{align*}
			|H^1_{ur}(k,T(\pi))| &\ll |H^1_{ur}(F(\pi)^{\Delta}, C_3)|\disc(F/\Q)^{\epsilon}\\
			&\ll |\Cl_{F(\pi)^{\Delta}}[3]|\disc(F/\Q)^{\epsilon},
		\end{align*}
		where $F(\pi)$ is the field of definition of $M$ (so necessarily a $V_4$-extension). We then evaluate
		\begin{align*}
			\sum_{\pi\in q_*\Sur(G_k,6T9;X)}|H^1_{ur}(k,T(\pi))| &\ll \sum_{\pi\in \Sur(G_k,V_4;X)}|\Cl_{F(\pi)^{\Delta}}[3]|\disc(F/\Q)^{\epsilon}\\
			&\ll X^{\epsilon}\sum_{\pi\in \Sur(G_k,V_4;X)}|\Cl_{F(\pi)^{\Delta}}[3]|.
		\end{align*}
		We decompose the sum over $V_4$-extensions by fibering over $\Delta\normal V_4$. Let $\overline{q}:V_4\to V_4/\Delta$ be the quotient map. This implies
		\begin{align*}
			\sum_{\pi\in \Sur(G_k,V_4;X)}|\Cl_{F(\pi)^{\Delta}}[3]| &= \sum_{\overline{\pi}\in \overline{q}_*\Sur(G_k,V_4;X)}|\Cl_{F(\overline{\pi})}[3]| \#\{\psi\in \overline{q}_*^{-1}(\overline{\pi}) : |\disc_{V_4}(\psi)|\le X\},
		\end{align*}
		We directly apply Theorem \ref{thm:uniformity} to bound the summands with uniform dependence on $\overline{\pi}$, proving that
		\begin{align*}
			\sum_{\pi\in \Sur(G_k,V_4;X)}|\Cl_{F(\pi)^{\Delta}}[3]| &\ll_{[k:\Q],\epsilon}\sum_{\overline{\pi}\in \overline{q}_*\Sur(G_k,V_4;X)}|\Cl_{F(\overline{\pi})}[3]| \frac{|H^1_{ur}(k,\Delta(\overline{\pi}))|}{|\overline{q}_*\disc(\overline{\pi})|^{1/2-\epsilon}}X^{1/2+\epsilon}.
		\end{align*}
		Now, $\Delta\normal V_4$ is a central subgroup, so $\Delta(\overline{\pi}) = \Delta$ carries the trivial action and the numerator can be bounded by $|\Cl_k[2]|\ll_k 1$. For the denominator, the fact that all nonidentity elements of $V_4$ have index $2$ implies $\overline{\pi}_*\disc(\overline{\pi}) = \disc(\overline{\pi})^2$. Converting to a sum over quadratic field via the Galois correspondence, we have shown that up to a constant multiple
		\begin{align*}
			\sum_{\pi\in q_*\Sur(G_k,6T9;X)}|H^1_{ur}(k,T(\pi))| \ll X^{1/2+\epsilon}\sum_{F\in \mathcal{F}_{2,k}(C_2;X^{1/2})} |\Cl_{F}[3]|\cdot |\disc(F/k)|^{-1+\epsilon}.
		\end{align*}
		Datskovsky--Wright \cite{DW88} proved that the average $3$-torsion of the class groups of relative quadratic extensions is constant. Abel summation then implies
		\begin{align*}
			\sum_{\pi\in q_*\Sur(G_k,6T9;X)}|H^1_{ur}(k,T(\pi))| &\ll X^{1/2+2\epsilon}.
		\end{align*}
		Replacing $\epsilon$ with $\epsilon/2$ concludes the proof.

		\item\label{it:6T10} The group $6T10$ is $C_3^2\rtimes C_4$ with the faithful action. Take $T = C_3^2 \normal 6T10$. A \verb^Magma^ search indicates that $a(T) = 2$, so that the lower bound is proven by the first author in \cite[Corollary 1.7]{alberts2021}. This is conjecturally sharp.

		We apply Theorem \ref{thm:main_abelian_on_top} to prove the upper bound, so that it suffices to show
		\[
			\sum_{\pi\in q_*\Sur(G_k,6T10;X)}|H^1_{ur}(k,T(\pi))| \ll X^{1/2+\epsilon},
		\]
		as $\theta = 1/2+\epsilon > 1/2 = 1/a(T)$.

		By calculating the indices of all elements outside of $T$, we conclude that
		\[
			q*\ind_{6T10}(g) =\begin{cases}
				4 & |\langle g\rangle| = 4\\
				2 & |\langle g\rangle| = 2.
			\end{cases}
		\]
		As the quotient group $C_3^2\rtimes C_4/C_3^2 = C_4$ is abelian (and therefore nilpotent), we use the upper bounds proven in \cite{alberts2020} for nilpotent groups ordered by arbitrary invariants to conclude
		\[
			q_*\Sur(G_k,6T10;X) \ll X^{1/2+\epsilon}.
		\]
		Next, we apply Lemma \ref{lem:inductive_H1ur_bound} to bound the summands with core-free subgroup $M=C_3\times 1 \le T$. $M$ is stabilized by $H=C_2\le C_4$ under the semidirect product action. Thus,
		\begin{align*}
			|H^1_{ur}(k,T(\pi))| &\ll |H^1_{ur}(F(\pi)^{C_2},C_3)|\cdot |\disc(F(\pi)/\Q)|^{\epsilon}\\
			&\ll |\Cl_{F(\pi)^{C_2}}[3]|\cdot |\disc(F(\pi)/\Q)|^{\epsilon}\\
			&\ll |\disc(F(\pi)^{C_2}/\Q)|^{1/2+\epsilon}|\disc(F(\pi)/\Q)|^{\epsilon},
		\end{align*}
		where $F(\pi)$ is the field fixed by $\ker\pi$ for $\pi:G_k\to C_4$, and $F(\pi)^{C_2}$ is the quadratic subfield. Thus
		\begin{align*}
			\sum_{\pi\in q_*\Sur(G_k,6T10;X)} |H^1_{ur}(k,T(\pi))|&\ll X^{\epsilon}\sum_{\pi\in \Sur_{q_*\disc}(G_k,C_4;X)} |\disc(F(\pi)^{C_2}/\Q)|^{1/2+\epsilon}.
		\end{align*}
		Next, we partition the sum according to the normal subgroup $C_2\normal C_4$, similar to 6T9. Let $\overline{q}:C_4\to C_4/C_2$ be the quotient map. This gives an upper bound of the form
		\begin{align*}
			\ll \sum_{\bar{\pi}\in \bar{q}_*\Sur_{q_*\disc}(G_k,C_4;X)} |\disc(\bar{\pi})|^{1/2+\epsilon}\#\{\psi\in \overline{q}_*^{-1}(\overline{\pi}) : |q_*\disc(\psi)|\le X\}.
		\end{align*}
		We can directly use Theorem \ref{thm:uniformity} to give an upper bound
		\begin{align*}
			&\ll \sum_{\bar{\pi}\in \bar{q}_*\Sur_{q_*\disc}(G_k,C_4;X)} |\disc(\bar{\pi})|^{1/2+\epsilon}\frac{|H^1_{ur}(k,C_2(\bar{\pi}))|}{|\bar{q}_*q_*\disc(\bar{\pi})|^{1/2-\epsilon}}X^{1/2+\epsilon}.
		\end{align*}
		The subgroup $C_2\normal C_4$ is central, and therefore $C_2(\bar{\pi})=C_2$ carries the trivial action. Thus, the numerator is bounded above by $|\Cl_k[2]|\ll_k 1$. The group $C_2$ has only one nontrivial element, so by checking the weight of that element we find that
		\[
			\bar{q}_*q_*\disc_{6T10}(\overline{\pi}) \asymp \disc_{C_2}(\bar\pi)^4.
		\]
		Thus, we can finally bound the sum by
		\begin{align*}
			\sum_{\pi\in q_*\Sur(G_k,6T10;X)} |H^1_{ur}(k,T(\pi))|&\ll X^{1/2+\epsilon}\sum_{\bar{\pi}\in \Sur(G_k,C_2;X^{1/4})} |\disc(\bar{\pi})|^{-3/2+\epsilon}.
		\end{align*}
		Via the Galois correspondence and Abel summation, we conclude that
		\begin{align*}
			\sum_{\pi\in q_*\Sur(G_k,6T10;X)} |H^1_{ur}(k,T(\pi))|&\ll X^{1/2+\epsilon}\sum_{F\in\mathcal{F}_{2,k}(C_2;X^{1/4})} |\disc(F/k)|^{-3/2+\epsilon}\\
			&\ll X^{1/2+\epsilon}.
		\end{align*}

		\item\label{it:6T11} $C_2\wr S_3$, Conjecture \ref{conj:number_field_counting} was first proven in \cite{kluners2012}. This is also a subcase of Corollary \ref{cor:cyclic_wreath}.

		\item\label{it:6T12} For $A_5$ in degree $6$, we check the indices in \verb^Magma^ to prove that
		\[
			\disc_{5T4}\ll \disc_{6T12} \ll \disc_{5T4}^2,
		\]
		where $5T4$ is the group $A_5$ in the degree $5$ representation. Thus,
		\[
			\#\mathcal{F}_{5,k}(A_5;X^{1/2}) \ll \#\mathcal{F}_{6,k}({\rm 6T12};X) \ll \#\mathcal{F}_{5,k}(A_5;X).
		\]
		The lower bound is then given by \cite{pierce_turnage-butterbaugh_wood2021} and the upper bound by \cite{bhargava-shankar-wang2015}.

		\item\label{it:6T13} $S_3\wr C_2$ is proven directly by Corollary \ref{cor:S3_wreath}.

		\item\label{it:6T14} For $S_5$ in degree $6$, we check the indices in \verb^Magma^ to prove that
		\[
			\disc_{5T5}\ll \disc_{6T14} \ll \disc_{5T5}^2,
		\]
		where $5T5$ is the group $S_5$ in the degree $5$ representation. Thus,
		\[
			\#\mathcal{F}_{5,k}(S_5;X^{1/2}) \ll \#\mathcal{F}_{6,k}({\rm 6T14};X) \ll \#\mathcal{F}_{5,k}(S_5;X).
		\]
		The asymptotic for $\#\mathcal{F}_{5,k}(S_5;X)\asymp X$ is given by \cite{bhargava-shankar-wang2015}.

		\item\label{it:6T15} For $A_6$ in degree $6$ \cite{pierce_turnage-butterbaugh_wood2021} prove the best known lower bound while Lemke Oliver gives the best known upper bound \cite{LO-uniform}.

		\item\label{it:6T16} For $S_6$ in degree $6$ \cite{Bhargava2022} proves the best known lower bound while Schmidt's trivial bound \cite{schmidt1995} is the best known upper bound.
	\end{enumerate}

\subsection{Nilpotent Groups}

	A \verb^Magma^ search reveals numerous new groups for which Corollary \ref{cor:nilpotent} proves Conjecture \ref{conj:number_field_counting}. In particular, all $2{,}685{,}340$ groups in degree 32 in Theorem \ref{cor:compute} for which we prove Conjecture \ref{conj:number_field_counting} are nilpotent.

	In the introduction, we referred to ${\rm Hol}(D_4) = D_4\rtimes \Aut(D_4)$ in degree $8$ as one such new example, which we elaborate on before proving Corollary \ref{cor:nilpotent}. Theorem \ref{thm:main_abelian_on_top} implies the asymptotic
	\[
		\#\mathcal{F}_{8,k}({\rm Hol}(D_4);X) \sim c(k,{\rm Hol}(D_4)) X^{1/2}\log X
	\]
	for some positive constant $c(k,{\rm Hol}(D_4)) > 0$. This follows from a \verb^Magma^ search through the elements of ${\rm Hol}(D_4)$, expressed as \verb|TransitiveGroup(8,26)|. This search confirms that $a({\rm Hol}(D_4)) = 2$, that there are two conjugacy classes of minimum index elements, and that
	\[
	T:=\langle g\in {\rm Hol}(D_4) : \ind(g) = 2\rangle \cong C_2^3
	\]
	is abelian. Given that the cyclotomic character acts trivially on group elements of order $2$, it follows that the power of $\log X$ given by Theorem \ref{thm:main_abelian_on_top} agrees with Malle's original prediction
	\[
	\max_{\pi} b(k,T(\pi)) = b(k,{\rm Hol}(D_4)) = 2.
	\]

	We now prove Corollary \ref{cor:nilpotent} via Theorem \ref{thm:main_abelian_on_top}. 

	\begin{proof}[Proof of Corollary \ref{cor:nilpotent}]
	If $G$ is a nilpotent group whose minimal elements all commute, choose
	\[
	T = \langle g\in G-\{1\} : \ind(g) = a(G)\rangle.
	\]
	This is a normal subgroup of $G$, as the index function is constant on conjugacy classes. It is abelian by assumption, so we can apply Theorem \ref{thm:main_abelian_on_top}.

	Alberts proves in \cite{alberts2020} a generalized version of Malle's predicted weak upper bound for nilpotent groups to arbitrary admissible invariant, including the pushforward discriminant. It follows from \cite[Theorem 1.6]{Kluners2022} or \cite[Corollary 5.2]{alberts2020} that $G$ nilpotent implies
	\[
		\#q_*\Sur(G_k,G;X) \le \#\Sur_{q_*\disc_G}(G_k,G/T;X) \ll_{\epsilon} X^{1/a(G-T)+\epsilon}.
	\]
	Given that $G$ is nilpotent, it follows that $T(\pi)$ is a nilpotent module for any $\pi:G_K\to G$. The inductive class group bounds in Corollary \ref{cor:inductive_H1ur_bounds}(i) then imply
	\begin{align*}
	\sum_{\pi\in q_*\Sur(G_k,G;X)} |H^1_{ur}(k,T(\pi))| \ll_{\epsilon} \sum_{\pi\in q_*\Sur(G_k,G;X)} |\disc(F(\pi)/\Q)|^{\epsilon}
	\end{align*}
	for $F(\pi)$ the field fixed by $\pi^{-1}(\Stab_G(1)T)$. As we have the freedom to choose $\epsilon$ as small as we like and $\fp\mi \disc(F(\pi)/\Q)$ if and only if $\fp\mid q_*\disc(\pi)$, it follows that there exists an $\epsilon' > 0$ for which we can bound
	\[
	|\disc(F(\pi)/\Q)|^{\epsilon'} \ll_{\epsilon} |q_*\disc(\pi)|^{\epsilon}.
	\]
	Thus, we conclude that
	\begin{align*}
		\sum_{\pi\in q_*\Sur(G_k,G;X)} |H^1_{ur}(k,T(\pi))| \ll_{\epsilon} \sum_{\pi\in q_*\Sur(G_k,G;X)} |q_*\disc_G(\pi)|^{\epsilon} \ll_{\epsilon} X^{1/a(G-T)+\epsilon}
	\end{align*}
	so that we can take $\theta = 1/a(G-T)+\epsilon$ in Theorem \ref{thm:main_abelian_on_top}.

	It is clear that $\theta < 1/a(G)$ by our choice of $T$, so Theorem \ref{thm:main_abelian_on_top}(i) implies Corollary \ref{cor:nilpotent}.
	\end{proof}

\subsection{Wreath Products}

	We present a proof of Corollary \ref{cor:cyclic_wreath} in this section. Our methods give us access to wreath products by abelian groups of slightly larger rank as well. We give a complete statement below:

	\begin{corollary}\label{cor:abelian_generalized_wreath}
		Let $G=A \wr B\subseteq S_{nm}$ be a wreath product of an abelian group $A$ of cardinality $n$ with a transitive subgroup $B$ of degree $m$.

		Suppose $k$ is a number field which has at least one $B$-extension and for which there exists a $\delta > 0$ such that
		\[
			\#\mathcal{F}_{m,k}(B;X) \ll_{k} X^{1 + \frac{1}{\ell-1} - \frac{d(A)}{2}-\delta},
		\]
		where $\ell$ is the smallest prime dividing $n$ and $d(A)$ the minimum number of generators of $A$ as an abstract group. Then Conjecture \ref{conj:number_field_counting} holds for $G$ over $k$.
	\end{corollary}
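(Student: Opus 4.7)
The plan is to apply Theorem \ref{thm:main_abelian_on_top} with $T = A^m \normal G$, the base group of the wreath product, which is abelian since $A$ is. First I need to identify the relevant invariant $a(T)$: in the wreath representation of degree $nm$, an element $(a_1,\dots,a_m) \in A^m$ acts as the disjoint action of the $a_i$ on blocks of size $n$, so $\ind_G((a_1,\dots,a_m)) = \sum_i \ind_A(a_i)$, where $A$ is in its regular representation. A nontrivial element of $A$ of order $d$ has $n/d$ cycles of length $d$ and hence index $n(d-1)/d$, so the minimum over $d \mid n$ with $d > 1$ is achieved at $d = \ell$, giving $a(T) = n(\ell-1)/\ell$ and $1/a(T) = \ell/(n(\ell-1)) = (1 + 1/(\ell-1))/n$.

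Next I will control the $H^1_{\mathrm{ur}}$ summands. Since $T = A^m = {\rm Ind}_1^B(A)$ as an abstract $B$-module (with $A$ carrying the trivial action), for any $\pi \in \Sur(G_k,B)$ corresponding to the $B$-extension $F/k$, we have an isomorphism of Galois modules $T(\pi) \cong {\rm Ind}_F^k(A)$ with $A$ a trivial $G_F$-module. Corollary \ref{cor:inductive_H1ur_bounds}(iii) then yields
\[
|H^1_{\mathrm{ur}}(k,T(\pi))| \ll_{|A|,m} |\Hom(\Cl_F,A)| \le |\Cl_F|^{d(A)} \ll_{[k:\Q],|A|,\epsilon} |\disc(F/\Q)|^{d(A)/2+\epsilon}
\]
by Minkowski's bound. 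Separately, Proposition \ref{prop:imprim_beta} (with the imprimitivity data giving $[G:S]=nm$ and $[G:S']=m$) implies
\[
q_*\Sur(G_k,G;X) \subseteq \Sur(G_k,B;X^{1/n}),
\]
so that up to constants the lift to $G$ of a $B$-extension $F$ contributes to the sum only when $|\disc(F/k)| \le X^{1/n}$.

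Combining these two facts, I estimate
\[
\sum_{\pi \in q_*\Sur(G_k,G;X)} |H^1_{\mathrm{ur}}(k,T(\pi))| \ll_{k,|A|,\epsilon} \sum_{F \in \mathcal{F}_{m,k}(B;\,X^{1/n})} |\disc(F/\Q)|^{d(A)/2+\epsilon}.
\]
Applying Abel summation to the hypothesis $\#\mathcal{F}_{m,k}(B;Y) \ll Y^{1+\frac{1}{\ell-1}-\frac{d(A)}{2}-\delta}$ gives an upper bound of
\[
\ll X^{(1+\frac{1}{\ell-1}-\delta+\epsilon)/n},
\]
so I may take $\theta = (1 + \frac{1}{\ell-1} - \delta + \epsilon)/n$ in Theorem \ref{thm:main_abelian_on_top}. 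Since $1/a(T) = (1 + \frac{1}{\ell-1})/n$, choosing $\epsilon < \delta$ gives $\theta < 1/a(T)$, and Theorem \ref{thm:main_abelian_on_top}(i) then delivers Conjecture \ref{conj:number_field_counting} for $G$ over $k$.

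There is essentially no hard step: the whole argument is a bookkeeping exercise that lines up the index computation, the induced-module bound of Corollary \ref{cor:inductive_H1ur_bounds}(iii), and the pushforward-discriminant comparison of Proposition \ref{prop:imprim_beta}. The mild subtlety is verifying the numerical match $1/a(T) = (1+\frac{1}{\ell-1})/n$, which is exactly what makes the hypothesis on $\#\mathcal{F}_{m,k}(B;X)$ tight enough to absorb both the Minkowski exponent $d(A)/2$ from $H^1_{\mathrm{ur}}$ and the rescaling by $X^{1/n}$ coming from the pushforward.
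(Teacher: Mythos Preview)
Your argument is correct and is essentially the same as the paper's. The only cosmetic difference is that the paper cites Corollary~\ref{cor:main_abelian_on_top_imprimitive} directly, whereas you unpack that corollary inline by invoking Proposition~\ref{prop:imprim_beta} and Corollary~\ref{cor:inductive_H1ur_bounds}(iii) and then applying Theorem~\ref{thm:main_abelian_on_top}; since Corollary~\ref{cor:main_abelian_on_top_imprimitive} is derived in Subsection~\ref{subsec:proving_abelian_on_top_imprimitive} from exactly those two ingredients, the substance of the two proofs is identical.
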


	It is clear that Corollary \ref{cor:cyclic_wreath} is an immediate consequence by $d(C_n) = 1$. Moreover, as long as $d(A) < 2 + \frac{2}{\ell-1}$ we can give further examples by taking $B$ to be nilpotent with $a(B)$ sufficiently large.

	\begin{proof}
	This will follow directly from Corollary \ref{cor:main_abelian_on_top_imprimitive}, as $A\wr B$ is an imprimitive group.

	Using Minkowski's bound on the size of the class group we find that
	\begin{align*}
		\sum_{F\in \mathcal{F}_{m,k}(B;X)} |\Hom(\Cl_F,A)| &\ll_{k,\epsilon} \sum_{F\in \mathcal{F}_{m,k}(B;X)} |\disc(F/k)|^{d(A)/2+\epsilon}\\
		&\ll_{k,\epsilon} X^{d(A)/2+\epsilon}\#\mathcal{F}_{m,k}(B;X).
	\end{align*}
	By assumption, it follows that
	\begin{align*}
		\sum_{F\in \mathcal{F}_{m,k}(B;X)} |\Hom(\Cl_F,A)| &\ll_{k,\epsilon} X^{1 + \frac{1}{\ell-1}-\delta}
	\end{align*}
	for some $\delta>0$ (where we choose $\epsilon$ sufficiently small to get canceled out). Thus, we can take $\theta = 1 + \frac{1}{\ell-1} - \delta$.

	The minimum index elements of $A^m$ are precisely the conjugates of $(a_1,1,1,...,1)$ with $a_1\in A$ a minimum index element. Thus,
	\[
		\frac{|A|}{a(A^m)} = \frac{|A|}{a(A)} = \frac{|A|}{|A|\frac{\ell-1}{\ell}} = \frac{\ell}{\ell-1} = 1 +\frac{1}{\ell-1}.
	\]
	We have now shown that $\theta < |A|/a(A^m)$, so the result follows from Corollary \ref{cor:main_abelian_on_top_imprimitive}(i).
	\end{proof}

\subsection{Iterated Wreath Products of Cyclic Groups}

	We address Corollary \ref{cor:iterated_cyclic_wreath} separately, as this example showcases the inductive power of our methods. We first prove the following upper bound results for iterated wreath products, which we will use as input in Corollary \ref{cor:main_abelian_on_top_imprimitive}.

	\begin{corollary}\label{cor:upper_bound_iterated_wreath}
		Let $k$ be a number field and $G = C_{n_1}\wr C_{n_2} \wr \cdots \wr C_{n_r}$. Then
		\[
		\#\Sur(G_k,G;X)\ll \begin{cases}
			X^{\frac{3}{2n_1}+\epsilon} & 2\nmid n_1\\
			X^{\frac{2}{n_1}} & 2\mid n_1.
		\end{cases}
		\]
	\end{corollary}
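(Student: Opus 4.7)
The plan is to induct on $r$, converting bounds between $\#\mathcal{F}_{n_1\cdots n_r,k}(G;X)$ and $\#\Sur(G_k,G;X)$ via Lemma~\ref{lem:sur-to-fields}. For the base case $r=1$, $G = C_{n_1}$ is cyclic in its regular representation, so Wright's asymptotic \cite{wright1989} applies with exponent $1/a(C_{n_1}) = \ell/(n_1(\ell-1))$, where $\ell$ is the smallest prime dividing $n_1$. For odd $n_1$ we have $\ell \geq 3$, so $1/a(C_{n_1}) \leq 3/(2n_1)$ and any log factor is absorbed into $X^\epsilon$; for even $n_1$, $\ell = 2$ gives $1/a(C_{n_1}) = 2/n_1$, and the unique order-$2$ element of $C_{n_1}$ forces $b(k,C_{n_1}) = 1$, eliminating the log factor entirely.

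For the inductive step, write $G = C_{n_1} \wr H$ with $H = C_{n_2} \wr \cdots \wr C_{n_r}$ of degree $m = n_2 \cdots n_r$. This is imprimitive with tower type $(C_{n_1}, H)$, so I would apply Corollary~\ref{cor:main_abelian_on_top_imprimitive} with $A = C_{n_1}$ and $B = H$. The inductive hypothesis yields an exponent $\alpha_{r-1}$ for $H$, which one verifies satisfies $\alpha_{r-1} \leq 1$ in every case, with equality (up to $\epsilon$) only when $n_2 = 2$. By Minkowski's bound $|\Hom(\Cl_F, C_{n_1})| \leq |\Cl_F[n_1]| \ll_\epsilon |\disc(F/\Q)|^{1/2+\epsilon}$, so
\[
	\sum_{F \in \mathcal{F}_{m,k}(H;X)} |\Hom(\Cl_F, C_{n_1})|
	\ll_\epsilon X^{1/2+\epsilon} \cdot \#\mathcal{F}_{m,k}(H;X)
	\ll X^{1/2 + \alpha_{r-1} + \epsilon},
\]
so one may take $\theta = 1/2 + \alpha_{r-1} + \epsilon \leq 3/2 + \epsilon$. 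The threshold appearing in Corollary~\ref{cor:main_abelian_on_top_imprimitive} is $|A|/a(A^m \cap G) = \ell/(\ell-1)$, where $\ell$ is the smallest prime dividing $n_1$.

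Two cases remain. When $n_1$ is even, the threshold is $2$ and $\theta < 2$, so part (i) applies, yielding $\sim c X^{2/n_1}(\log X)^{b-1}$; a direct orbit computation for $T = C_{n_1}^m$ shows the minimum-index elements are those supported on a single coordinate with order-$2$ value, the cyclotomic character acts trivially on such elements, and the transitive action of $H$ on coordinates collapses everything to a single orbit, giving $b = 1$ and no log factor. When $n_1$ is odd, the threshold is $\ell/(\ell-1) \leq 3/2$. If $n_2 > 2$, then $\alpha_{r-1} < 1$, so $\theta < 3/2$ and part (i) gives $\ll X^{\ell/(n_1(\ell-1)) + \epsilon} \leq X^{3/(2n_1)+\epsilon}$; if $n_2 = 2$, then $\alpha_{r-1} = 1$ and $\theta \leq 3/2 + \epsilon$, so part (ii) yields $\ll X^{\theta/n_1 + \epsilon} \leq X^{3/(2n_1)+\epsilon}$.

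The main subtlety will be tracking the exponent bookkeeping across the phase transition between regimes (i) and (ii) of Corollary~\ref{cor:main_abelian_on_top_imprimitive}, particularly at the boundary case $n_2 = 2$ where $\alpha_{r-1}$ saturates at $1$. The other delicate point is verifying, via the orbit analysis sketched above, that the logarithmic factor in the even case really does collapse to $b=1$ rather than merely being absorbed into an $\epsilon$, which is what makes the clean bound $X^{2/n_1}$ (without $\epsilon$) hold.
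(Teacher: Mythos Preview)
Your approach is essentially the same as the paper's: induct on $r$ and apply Corollary~\ref{cor:main_abelian_on_top_imprimitive} with $A = C_{n_1}$, using Minkowski's bound to get $\theta \le 3/2$. The paper streamlines by uniformly weakening the inductive hypothesis to $\#\mathcal{F}_{m,k}(H;X) \ll X$, so that for odd $n_1$ one always lands in part~(ii) and your sub-case split on $n_2$ is unnecessary; on the other hand, your orbit computation verifying $b=1$ in the even case goes beyond the paper's proof, which only explicitly concludes $X^{2/n_1+\epsilon}$.
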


	\begin{proof}
	We prove this by inducting on $k$. If $k=1$, these groups are abelian and the result follows from \cite{wright1989}. For $k>1$, write $G = C_{n_1}\wr H$ where $H$ is an iterated wreath product of cyclic groups of length $k-1$. For the sake of convenience, we can weaken the inductive hypothesis to
	\[
		\#\mathcal{F}_{n_2\cdots n_k,k}(H;X) \ll X.
	\]

	$G$ is certainly an imprimitive group, so we apply Corollary \ref{cor:main_abelian_on_top_imprimitive}. Minkowski's bound and the inductive hypothesis imply that
	\begin{align*}
		\sum_{F\in \mathcal{F}_{n_2\cdots n_k,k}(H;X)} |\Hom(\Cl_F,C_{n_1})| &\ll X^{1/2}\#\mathcal{F}_{n_2\cdots n_k,k}(H;X) \ll X^{3/2},
	\end{align*}
	We will compare this to
	\[
		\frac{n_1}{a(C_{n_1})} = \frac{\ell}{\ell-1}
	\]
	for $\ell$ the smallest prime dividing $n_1$.

	If $n_1$ is odd, then $\theta = 3/2 \ge \frac{\ell}{\ell-1} = \frac{n_1}{a(C_{n_1})}$. Thus, Corollary \ref{cor:main_abelian_on_top_imprimitive}(ii) implies
	\[
		\#\mathcal{F}_{n_1\cdots n_k,k}(G;X) \ll X^{\frac{\theta}{n_1}+\epsilon} = X^{\frac{3}{2n_1}+\epsilon}.
	\]
	Meanwhile if $n_1$ is even, then $\theta = 3/2 < 2 = \frac{n_1}{a(C_{n_1})}$. Thus, Corollary \ref{cor:main_abelian_on_top_imprimitive}(i) implies
	\[
		\#\mathcal{F}_{n_1\cdots n_k,k}(G;X) \ll X^{\frac{1}{a(C_{n_1})}+\epsilon} = X^{\frac{2}{n_1}+\epsilon}.
	\]
	\end{proof}

	We can now prove the shortest cases of Corollary \ref{cor:iterated_cyclic_wreath} as a consequence of Corollary \ref{cor:cyclic_wreath}.

	\begin{proof}[Proof of Corollary \ref{cor:iterated_cyclic_wreath}(a,b)]
	Suppose first that $n_2 > 2$. If $n_2 > 2$ is even then certainly $2/n_2 < 1/2$, while if $n_2> 2$ is odd $3/2n_2 < 1/2$. The result then follows from Corollary \ref{cor:cyclic_wreath}, as Corollary \ref{cor:upper_bound_iterated_wreath} implies
	\begin{align*}
		\#\mathcal{F}_{n_2\dots n_r,k}(B;X) \ll X^{1/2} \le X^{\frac{1}{2} + \frac{1}{\ell-1}-\delta}
	\end{align*}
	for $\ell$ the smallest prime dividing $n_1$. In fact, the same argument applies if $n_1=n_2=2$ as part of (c).

	If $n_1,n_2,...,n_r$ are all powers of $2$, then $G$ is a $2$-group (and therefore nilpotent) with minimum index elements landing in $C_{n_1}^{n_2\cdots n_r}$ and the result follows from Corollary \ref{cor:nilpotent}.
	\end{proof}

	The remaining cases of Corollary \ref{cor:iterated_cyclic_wreath} with $n_2 = 2$ require a closer study of $|H^1_{ur}(k,T(\pi))|$, which we give in the following lemma:

	\begin{lemma}\label{lem:odd_part_inductive_H1_bounds}
		Let $F/k$ be a $G=C_2\wr B$-extension in degree $2m$. Then
		\[
			|H^1_{ur}(k,{\rm Ind}_F^k(C_{n}))| \ll_{|G|,\epsilon} |\Cl_E[2^{\nu_2(n)}]| \cdot |\Cl_E[2]|^{\nu_2(n)} \cdot |\Cl_F[n_{\rm odd}]| \cdot |\disc(F/\Q)|^{\epsilon},
		\]
		where $\nu_2(n)$ is the order to which $2$ divides $n$, $E$ is the index two subfield of $F/k$ fixed by the normal subgroup $C_2^m\normal C_2\wr B$, and $n_{\rm odd}$ is the odd part of $n$.
	\end{lemma}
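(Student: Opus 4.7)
The plan is to combine Shapiro's lemma for unramified cohomology (Lemma~\ref{lem:induced}) with two successive applications of Lemma~\ref{lem:inductive_H1ur_bound}. The cleanness of the argument rests on the observation that the sign action of $\Gal(F/E) = C_2$ on $C_2$ coincides with the trivial action, since $-1 \equiv 1 \pmod 2$.

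First, using induction in stages $\mathrm{Ind}_F^k = \mathrm{Ind}_E^k \circ \mathrm{Ind}_F^E$ together with Lemma~\ref{lem:induced}, I would rewrite $H^1_{ur}(k, \mathrm{Ind}_F^k(C_n)) \cong H^1_{ur}(E, \mathrm{Ind}_F^E(C_n))$ and split $C_n = C_{2^j} \oplus C_{n_{\mathrm{odd}}}$ with $j = \nu_2(n)$. The odd part yields $H^1_{ur}(E, \mathrm{Ind}_F^E(C_{n_{\mathrm{odd}}})) \cong H^1_{ur}(F, C_{n_{\mathrm{odd}}}) = \Hom(\Cl_F, C_{n_{\mathrm{odd}}})$ by a second application of Shapiro, and this has size $|\Cl_F[n_{\mathrm{odd}}]|$, accounting for the target's odd factor. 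It then remains to bound $|H^1_{ur}(E, \mathrm{Ind}_F^E(C_{2^j}))|$.

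For the $2$-part, consider the short exact sequence of $G_E$-modules
\[
0 \to C_{2^j}^{\mathrm{triv}} \to \mathrm{Ind}_F^E(C_{2^j}) \to C_{2^j}^{\mathrm{sign}} \to 0,
\]
where $C_{2^j}^{\mathrm{triv}} = \{(c,c) : c \in C_{2^j}\}$ is the diagonal sub-$G$-module and $\sigma$ acts by $-1$ on the quotient $C_{2^j}^{\mathrm{sign}}$. Applying Lemma~\ref{lem:inductive_H1ur_bound} with $H = \Gal(F/E) = C_2$ (so $F^H = E$) and $M = C_{2^j}^{\mathrm{triv}}$ (so $\mathrm{Core}(M) = M$) gives
\[
|H^1_{ur}(E, \mathrm{Ind}_F^E(C_{2^j}))| \ll_{|G|, \epsilon} |H^1_{ur}(E, C_{2^j}^{\mathrm{triv}})| \cdot |H^1_{ur}(E, C_{2^j}^{\mathrm{sign}})| \cdot |\disc(F/\Q)|^\epsilon,
\]
and the first factor on the right equals $|\Hom(\Cl_E, C_{2^j})| = |\Cl_E[2^j]|$, providing another target contribution.

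The final task is to bound $|H^1_{ur}(E, C_{2^j}^{\mathrm{sign}})|$, which I would do by induction on $j$ using the short exact sequence
\[
0 \to C_2^{\mathrm{triv}} \to C_{2^j}^{\mathrm{sign}} \to C_{2^{j-1}}^{\mathrm{sign}} \to 0,
\]
where $C_2^{\mathrm{triv}}$ is the unique order-$2$ subgroup $2^{j-1} \cdot C_{2^j}$ of $C_{2^j}^{\mathrm{sign}}$; the sign action on this $C_2$ is trivial since $-2^{j-1} \equiv 2^{j-1} \pmod{2^j}$, making the subgroup $G$-stable. Another application of Lemma~\ref{lem:inductive_H1ur_bound} (with $H = G$ and $M = C_2^{\mathrm{triv}}$) gives $|H^1_{ur}(E, C_{2^j}^{\mathrm{sign}})| \ll_{|G|, \epsilon} |\Cl_E[2]| \cdot |H^1_{ur}(E, C_{2^{j-1}}^{\mathrm{sign}})| \cdot |\disc(F/\Q)|^\epsilon$; iterating $j$ times (with trivial base case $j=0$) yields $|H^1_{ur}(E, C_{2^j}^{\mathrm{sign}})| \ll |\Cl_E[2]|^j \cdot |\disc(F/\Q)|^{j\epsilon}$. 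Multiplying all factors gives the claimed inequality; the only piece of bookkeeping is to replace $\epsilon$ by $\epsilon/(j+1)$ from the outset so that the $j+1$ total applications of Lemma~\ref{lem:inductive_H1ur_bound} accumulate to a single $|\disc(F/\Q)|^\epsilon$. Since $j = \nu_2(n)$ is bounded in terms of $|G|$ in the intended applications (where $n$ is part of the wreath structure), no difficulty arises.
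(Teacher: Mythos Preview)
Your proof is correct and follows essentially the same approach as the paper: split off the odd part via Shapiro, pass from $k$ to $E$ by induction in stages, decompose $\mathrm{Ind}_F^E(C_{2^j})$ via the diagonal submodule using Lemma~\ref{lem:inductive_H1ur_bound}, and then peel off the sign module $C_{2^j}(-1)$ one $C_2$ layer at a time. Your explicit observation that the sign action on the order-$2$ subgroup is trivial, and your remark about rescaling $\epsilon$ to absorb the $\nu_2(n)$ iterations, are points the paper leaves implicit.
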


	\begin{proof}
	Induced modules and cohomology groups respect direct sum decompositions, so we can write
	\[
		H^1_{ur}(k,{\rm Ind}_F^k(C_{n})) = H^1_{ur}(k,{\rm Ind}_F^k(C_{2^{\nu_2(n)}}))\oplus H^1_{ur}(k,{\rm Ind}_F^k(C_{n_{\rm odd}})).
	\]
	We will bound the two factors separately. Lemma~\ref{lem:induced} implies
	\[
		|H^1_{ur}(k,{\rm Ind}_F^k(C_{n_{\rm odd}}))| =|\Hom(\Cl_F,C_{n_{\rm odd}})| = |\Cl_F[n_{\rm odd}]|.
	\]
	For the even factor, we need to further decompose the module. Taking $E$ to be the index $2$ subfield of $F/k$, we have
	\[
		{\rm Ind}_F^k(C_{2^{\nu_2(n)}}) = {\rm Ind}_F^E({\rm Ind}_E^k(C_{2^{\nu_2(n)}}))).
	\]
	Lemma \ref{lem:induced} gives that
	\[
		|H^1_{ur}(k,{\rm Ind}_F^k(C_{2^{\nu_2(n)}}))| = |H^1_{ur}(E,{\rm Ind}_F^E(C_{2^{\nu_2(n)}}))| .
	\]
	There is an isomorphism of abstract groups ${\rm Ind}_F^E(C_{2^{\nu_2(n)}})\cong C_{2^{\nu_2(n)}}\times C_{2^{\nu_2(n)}}$, where $G_E$ acts by permuting the coordinates. The diagonal subgroup is then a submodule with the trivial action. Taking $M$ to be this diagonal in Lemma \ref{lem:inductive_H1ur_bound} implies
	\begin{align*}
		|H^1_{ur}(E,{\rm Ind}_F^E(C_{2^{\nu_2(n)}}))| &\ll_{|G|,\epsilon} |H^1_{ur}(E,C_{2^{d_2}})|\cdot |H^1_{ur}(E,C_{2^{d_2}}(-1))| \cdot |\disc(F/\Q)|^{\epsilon}\\
		&\ll_{|G|,\epsilon} |\Cl_E[2^{\nu_2(n)}]| \cdot |H^1_{ur}(E,C_{2^{d_2}}(-1))|\cdot |\disc(F/\Q)|^{\epsilon},
	\end{align*}
	where $\Gal(F/E)$ acts on $C_{2^{\nu_2(n)}}(-1)$ via the dihedral action $\sigma.a=a^{-1}$. This group has a central subgroup isomorphic to $C_2$, with quotient $C_{2^{\nu_2(n)-1}}(-1)$. Iterating Lemma \ref{lem:inductive_H1ur_bound} with $M$ being this central subgroup implies
	\begin{align*}
		|H^1_{ur}(k,{\rm Ind}_F^k(C_{2^{d_2}}))| &\ll_{|G|,\epsilon} |\Cl_E[2^{\nu_2(n)}]| \cdot |H^1_{ur}(E,C_{2})|^{\nu_2(n)}\cdot |\disc(F/\Q)|^{\epsilon}\\
		&\ll_{|G|,\epsilon} |\Cl_E[2^{\nu_2(n)}]| \cdot |\Cl_E[2]|^{\nu_2(n)}\cdot |\disc(F/\Q)|^{\epsilon}.
	\end{align*}
	This concludes the proof.
	\end{proof}

	We can now prove the remaining cases of Corollary \ref{cor:iterated_cyclic_wreath}.

	\begin{proof}[Proof of Corollary \ref{cor:iterated_cyclic_wreath}(c,d)]
	We take $n_2=2$, and write $n_1 = 2^{d_2}3^{d_3}$. We will prove the result using Theorem \ref{thm:main_abelian_on_top}. Take $T = C_{n_1}^{n_2\cdots n_r}$ as a normal subgroup in $G$ and take $B=C_{n_3}\wr \cdots \wr C_{n_r}$ so that $G/T = C_2\wr B$. (If $r=2$, then we take $B=1$). We know that the wreath action realizes $T$ as the induced module ${\rm Ind}_1^{C_2\wr B}(T)$. Thus, for any $\pi\in \Sur(G_k,C_2\wr B)$
	\[
		T(\pi) = {\rm Ind}_{F(\pi)}^k(C_{n_1}),
	\]
	where $F(\pi)/k$ is the extension fixed by $\pi^{-1}(\Stab_{C_2\wr B}(1))$ and $G_{F(\pi)}$ acts trivially on $T$. Thus, we are interested in bounding
	\begin{align*}
		\sum_{\pi\in q_*\Sur(G_k,G;X)}|H^1_{ur}(k,T(\pi))| \ll_{n_1,\epsilon} \sum_{\pi\in q_*\Sur(G_k,G;X)} |H^1_{ur}(k,{\rm Ind}_{F(\pi)}^k(C_{n_1}))|\cdot|\disc(F(\pi)/\Q)|^{\epsilon}.
	\end{align*}
	By Proposition \ref{prop:imprim_beta},
	\[
		q_*\Sur(G_k,G;X) \subseteq \Sur(G_k,C_2\wr B, cX^{1/n_1})
	\]
	for some constant $c$ depending only on $[k:\Q]$ and $n_1$. Up to a constant multiple, we have bounded
	\begin{align*}
		\sum_{\pi\in q_*\Sur(G_k,G;X)}|H^1_{ur}(k,T(\pi))| \ll_{n_1,\epsilon} \sum_{F\in \mathcal{F}_{2n_3\cdots n_r,k}(C_2\wr B; cX^{1/n_1})} |H^1_{ur}(k,{\rm Ind}_F^k(C_{n_1}))|X^{\epsilon}.
	\end{align*}
	We now partition this sum according to the index $2$-subfield $E$ fixed by $C_2^{n_3\cdots n_r}$, yielding
	\begin{align*}
		\ll_{n_1,\epsilon}\sum_{E\in \mathcal{F}_{n_3\cdots n_r,k}(B; cX^{1/2n_1})}\sum_{F\in \mathcal{F}_{2,E}(C_2;c^{1/2}X^{1/n_1}/|\disc(E/\Q)|^2)} |H^1_{ur}(k,{\rm Ind}_F^k(C_{n_1}))|X^{\epsilon}.
	\end{align*}
	We can bound this above using Lemma \ref{lem:odd_part_inductive_H1_bounds}.
	\begin{align*}
		\ll_{|G|,\epsilon}\sum_{E\in \mathcal{F}_{n_3\cdots n_r,k}(B; cX^{1/2n_1})}\sum_{F\in \mathcal{F}_{2,E}(C_2;c^{1/2}X^{1/n_1}/|\disc(E/\Q)|^2)} |\Cl_E[2^{d_2}]|\cdot |\Cl_E[2]|^{d_2} \cdot |\Cl_F[3^{d_3}]| \cdot X^{\epsilon}.
	\end{align*}
	If $d_3=0$, we can bound the sum over $F$ directly by
	\[
		\#\mathcal{F}_{2,E}(C_2;Y) \ll_{k,[E:k],\epsilon} |\disc(E/\Q)|^{\epsilon}  \cdot |\Cl_E[2]| \cdot Y.
	\]
	This follows, for example, from Theorem \ref{thm:uniformity} for $C_2$ with the trivial action. If $d_3=1$, then we can bound the sum over $F$ using \cite[Corollary 3.2]{LOWW} as
	\[
		\sum_{F\in \mathcal{F}_{2,E}(C_2;Y)} |\Cl_F[3]| \ll_{[E:\Q],\epsilon} |\disc(E/\Q)|^{1+\epsilon} \cdot |\Cl_E[2]|^{2/3} \cdot Y.
	\]
	These produce the upper bound
	\begin{align*}
		\ll_{|G|,[k:\Q],\epsilon}\sum_{E\in \mathcal{F}_{n_3\cdots n_r,k}(B; cX^{1/2n_1})}|\Cl_E[2^{d_2}]|\cdot |\Cl_E[2]|^{d_2+1-d_3/3} \cdot |\disc(E/\Q)|^{d_3-2+\epsilon} \cdot X^{1/n_1+\epsilon}
	\end{align*}
	for $d_3\in \{0,1\}$. Minkowski's bound on the size of the class group gives an upper bound of the form
	\begin{align*}
		\ll_{|G|,[k:\Q],\epsilon}\sum_{E\in \mathcal{F}_{n_3\cdots n_r,k}(B; cX^{1/2n_1})}|\disc(E/\Q)|^{\frac{1}{2}d_2 + \frac{5}{6}d_3 - 1 +\epsilon} \cdot X^{1/n_1+\epsilon}.
	\end{align*}
	Corollary \ref{cor:upper_bound_iterated_wreath} and Abel summation then produce the upper bound $X^{\max\{\theta,1/n_1\}}$ for
	\[
		\theta = \frac{1}{n_1n_3} + \frac{d_2}{4n_1} + \frac{5d_3}{12n_1} - \frac{1}{2n_1} + \frac{1}{n_1} + \epsilon.
	\]
	(Unless $r=2$ so that $B=1$, then $\theta = 0$ and we are done.) Noting that $1/n_1 < 1/a(T) = 1/a(C_{n_1})$, it suffices to determine when $\theta < 1/a(T)$. If $d_2 = 0$ and $d_3=1$ (so $n_1=3$), then we get $\theta = 1/n_3 - 1/36 < 1/2 = 1/a(T)$, as $a(T) = a(C_3)$ in this case and $n_3\ge 2$. Otherwise, $d_2 > 0$ and $1/a(T) = 2/n_1$. It follows that $\theta < 2/n_1$ if and only if
	\[
		d_2 < 6 - \frac{5d_3}{3} + \frac{4}{n_3}.
	\]
	The result then follows by plugging in each of $d_3=0$ and $d_3=1$.
	\end{proof}

\subsection{Wreath products by $S_3$ in the wreath representation}

	Corollary \ref{cor:S3_wreath} follows directly from Theorem \ref{thm:main_S3_wreath}.

	Indeed, using the assumption $\#\mathcal{F}_{m,k}(B;X) \ll X^{\frac{5}{3} + \frac{1}{3r[K:\Q]} - \delta}$ for some $\delta > 0$, we can bound the class group using the $2$-torsion bounds of \cite{bhargava_shankar_taniguchi_thorne_tsimerman_zhao_2020}
	\begin{align*}
	\sum_{F\in \mathcal{F}_{m,k}(B;X)} |\Cl_{F}[2]|^{2/3}&\ll \sum_{F\in \mathcal{F}_{m,k}(B;X)} |\disc(F/\Q)|^{\frac{1}{3} - \frac{1}{3m[K:\Q]}+\epsilon}\\
	&\ll X^{\frac{1}{3} - \frac{1}{3m[K:\Q]} + \epsilon}\#\mathcal{F}_{m,k}(B;X)\\
	&\ll X^{2 + \epsilon - \delta}.
	\end{align*}
	Taking $\epsilon < \delta$, we can choose $\theta = 2 + \epsilon - \delta < 2$. Corollary~\ref{cor:S3_wreath}(i) then follows from Theorem \ref{thm:main_S3_wreath}(i).

	For Corollary \ref{cor:S3_wreath}(ii), suppose that $B$ is primitive and that there exists some constant $\beta$ so that
		\[
			\#\mathcal{F}_{m,k}(B;X)
				\ll_{m,k} X^{\beta}.
		\]
	From \cite[Corollary~7.4]{LemkeOliver2024}, it follows that
		\[
			\sum_{F \in \mathcal{F}_{m,k}(B;X)} |\Cl_F[2]|
				\ll_{m,k,\epsilon} X^{\frac{1}{2} + \beta \cdot \left(1 - \frac{1}{2m-1}\right) + \epsilon}.
		\]
	Hence, on using H\"older's inequality, we find
		\begin{align*}
			\sum_{F \in \mathcal{F}_{m,k}(B;X)} |\Cl_F[2]|^{2/3}
				&\leq \left( \sum_{F \in \mathcal{F}_{m,k}(B;X)} |\Cl_F[2]| \right)^{2/3} \left( \sum_{F \in \mathcal{F}_{m,k}(B;X)} 1 \right)^{1/3} \\
				&\ll_{m,k,\epsilon} X^{\frac{1}{3} + \beta \cdot \frac{6m-5}{6m-3} + \epsilon}.
		\end{align*}
	It follows that if there is some $\delta > 0$ so that $\beta = \frac{5}{3} + \frac{10}{18m-15} - \delta$, then the exponent above is strictly less than $2$, and the result follows from Theorem~\ref{thm:main_S3_wreath}(i).

\subsection{Trace $0$ Semidirect Products}

	We now prove Corollary \ref{cor:trace_0_semidirect} from Corollary \ref{cor:main_abelian_on_top_imprimitive}.

	Given $G = W\rtimes B$ for $W\le \F_p^m$ the trace zero subspace and $B$ a transitive group of degree $m$ is given as an explicit subgroup of the wreath product $C_p\wr B$. Moreover, the degree $pm$ representation realizes $G$ as a permutation subgroup of $C_p\wr B$, and so $G$ is imprimitive and we can apply Corollary \ref{cor:main_abelian_on_top_imprimitive} with $A = C_p$. We then bound
	\begin{align*}
		\sum_{F\in \mathcal{F}_{m,k}(B;X)} |\Hom(\Cl_F,C_p)| &\ll_{k,\epsilon} \sum_{F\in \mathcal{F}_{m,k}(B;X)} |\disc(F/k)|^{1/2+\epsilon}\\
		&\ll_{k,\epsilon} X^{1/2+\epsilon} \#\mathcal{F}_{m,k}(B;X).
	\end{align*}
	By assumption, we have bounded
	\begin{align*}
		\sum_{F\in \mathcal{F}_{m,k}(B;X)} |\Hom(\Cl_F,C_p)| &\ll_{k,\epsilon} X^{\frac{1}{2} + \frac{1}{2(p-1)}-\delta}
	\end{align*}
	for some $\delta > 0$. Meanwhile, the minimum index elements of $W$ are permutations of $(a,a^{-1},1,1,...)$, which have index $2(p-1)$. Thus,
	\begin{align*}
		\frac{|C_p|}{a(G\cap C_p^m)} = \frac{p}{a(W)} = \frac{p}{2(p-1)} = \frac{1}{2} + \frac{1}{2(p-1)}.
	\end{align*}
	Thus, $\theta < |C_p|/a(G\cap C_p^m)$ so the result follows from Corollary \ref{cor:main_abelian_on_top_imprimitive}(i).

\section{Further Applications to Concentrated Groups}\label{sec:conditional_examples}

	We use the discussion in this section to frames our method as a general approach to Conjecture \ref{conj:number_field_counting} for any concentrated group.

	The compounding phenomenon of these methods can make it difficult to see from the statements of the main theorems exactly which groups are actually covered by our main results. Our main results are explicitly apply to groups in the following families:
	\begin{itemize}
		\item if $G=S_3\wr B$ for some transitive group $B$, then Theorem \ref{thm:main_S3_wreath} may be applicable.
		\item if $G$ is concentrated in an abelian normal subgroup, i.e. all the elements of minimal index commute with each other, then Theorem \ref{thm:main_abelian_on_top} may be applicable.
	\end{itemize}
	These are purely group theoretic conditions which, in particular, imply that $G$ is concentrated and indicate when we might expect our methods to apply in the future.

	\begin{data}
		Among the $40238$ transitive groups of degree $\le 31$,
		\begin{enumerate}[(i)]
			\item $39770$ are concentrated,
			\item $166$ are of the form $S_3\wr B$, and
			\item $30691$ are concentrated in an abelian normal subgroup.
		\end{enumerate}
	\end{data}

	On the one hand, existing conjectures suggest that improved bounds for the (average) size of class group torsion and the number of $G/T$-extension should exist which we can use as input for Theorem \ref{thm:main_S3_wreath} and Theorem \ref{thm:main_abelian_on_top}. The \emph{$\ell$-torsion conjecture} predicts that $|\Cl_F[\ell]|\ll_{\epsilon} |\disc(F/\Q)|^{\epsilon}$ as $F$ varies over any family of number fields $\mathcal{F}$ with bounded degree (this is generally regarded as a folklore conjecture, see \cite{pierce_turnage-butterbaugh_wood2021} by Pierce, Turnage--Butterbaugh, and Wood for a good introduction). Meanwhile, we already discussed Conjecture \ref{conj:WMpush} for an upper bound on the number of $G/T$-extensions order by the pushforward discriminant following from the discussion in \cite[Question 4.3]{ellenberg-venkatesh2005}.

	In the context of Theorem \ref{thm:main_S3_wreath} these conjectures would imply that
	\[
		\sum_{F\in \mathcal{F}_{m,k}(B;X)} |\Cl_F[2]|^{2/3} \ll_{m,k} X^{1/a(B)+\epsilon},
	\]
	so that we can take $\theta = 1/a(B)$. By definition $1/a(B) \le 1 < 2$, so Conjecture \ref{conj:number_field_counting} would follow.

	Similarly, in the context of Theorem \ref{thm:main_abelian_on_top}, these conjectures would imply that
	\[
		\sum_{\pi\in q_*\Sur(G_k,G;X)}|H^1_{ur}(k,T(\pi))| \ll_{\epsilon} X^{1/a(G-T) + \epsilon},
	\]
	so that we can take $\theta = 1/a(G-T)$. If $G$ is concentrated in $T$, then $\theta < 1/a(G)$ by definition an Conjecture \ref{conj:number_field_counting} would follow.

	On the one hand we argue that our method is, in principle, applicable to any concentrated group. Theorem \ref{thm:main_abelian_on_top} can be extended to allow $T$ to be nonabelian as long as Conjecture \ref{conj:twisted_number_field_counting} is known for such $T$ with sufficient uniformity. This gives a roadmap for proving Conjecture \ref{conj:number_field_counting} for any concentrated group, through proving new cases for Conjecture \ref{conj:twisted_number_field_counting}.

\section{A Cute Extension}\label{sec:cute}

	As a demonstration of the general nature of our methods, we prove the following cute result:

	\begin{theorem}\label{thm:cute}
		Let $G$ be a group with a nontrivial abelian normal subgroup and $k$ a number field which has at least one $G$-extension. Then there exists an admissible ordering of $G$-extensions, $\inv$, for which there are positive constants $b,c > 0$ such that
		\[
			\#\Sur_{\inv}(G_k,G;X) \sim c X(\log X)^{b-1}.
		\]
	\end{theorem}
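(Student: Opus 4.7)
The plan is to follow the general strategy of Theorem~\ref{thm:main_abelian_on_top}, but with a custom admissible ordering chosen to artificially force concentration. Fix a nontrivial abelian normal subgroup $T \normal G$ with quotient map $q \colon G \to G/T$, and let $W$ be a large integer to be chosen. Define a conjugation-invariant weight function $w \colon G \to \Z_{\geq 0}$ by $w(1) = 0$, $w(g) = 1$ for $g \in T - \{1\}$, and $w(g) = W$ for $g \in G - T$. This extends in the standard way to an admissible ordering $\inv$ on $\Sur(G_k,G)$ in the sense of \cite{alberts-odorney2021} by declaring the local contribution at a tame prime $\fp$ to be $|\fp|^{w(\psi(\tau_\fp))}$, with the obvious conventions at wild and infinite places. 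With this choice, $a_\inv(T) = 1$ while $a_\inv(G - T) = W$, so $G$ is tautologically concentrated in $T$ with respect to $\inv$.

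The proof would then parallel that of Theorem~\ref{thm:main_abelian_on_top}, invoking Theorem~\ref{thm:main_pointwise} with $a = 1$ and $b = \max_\pi b(k, T(\pi))$ as $\pi$ ranges over $q_*\Sur_\inv(G_k, G)$; note that $b$ is a positive integer because $T$ is nontrivial and at least one such $\pi$ exists by hypothesis. For condition (1), the main results of Alberts--O'Dorney, which are established at the level of general admissible orderings with no local restrictions, yield
\[
\#\{\psi \in q_*^{-1}(\pi) : \inv(\psi) \leq X\} \sim c(\pi) \, X \, (\log X)^{b(k, T(\pi)) - 1},
\]
with $c(\pi) \geq 0$ and $c(\pi) > 0$ for at least one $\pi$ by the existence of a $G$-extension. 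For condition (2), the contour-shifting argument of Theorem~\ref{thm:uniformity} transfers essentially verbatim, since Lemmas~\ref{lem:upperboundMBseries} and~\ref{lem:MBfactors} only use the abelian structure of $T$ and the Euler product form of the ordering; this gives a uniform bound of the shape $f(\pi) \ll |H^1_{ur}(k, T(\pi))| \cdot |q_*\inv(\pi)|^{-1+\epsilon}$.

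The central step is verifying the convergence criterion (condition (3)), and this is where the freedom to choose $W$ large is exploited. Every prime $\fp$ ramified in $\pi$ contributes a factor of at least $|\fp|^W$ to $q_*\inv(\pi)$, whereas the same prime contributes at most $|\fp|^{C_0}$ to $|\disc_{G/T}(\pi)|$ for some constant $C_0 = C_0(|G/T|, [k:\Q])$ bounded by the largest possible (wild) ramification exponent; hence $|q_*\inv(\pi)| \geq |\disc_{G/T}(\pi)|^{W/C_0}$. Combined with the Minkowski-based bound $|H^1_{ur}(k, T(\pi))| \ll_\epsilon |\disc_{G/T}(\pi)|^{d(T)/2 + \epsilon}$ from Lemma~\ref{lem:H1_first_bound} and a polynomial discriminant bound $\#\{\pi : |\disc_{G/T}(\pi)| \leq Y\} \ll Y^{C_1}$ (for instance, Schmidt's bound~\cite{schmidt1995}), partial summation shows that $\sum_\pi f(\pi)$ converges as soon as $W$ is chosen larger than a constant depending on $|G|$, $[k:\Q]$, $d(T)$, and $C_1$. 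Theorem~\ref{thm:main_pointwise} then delivers the claimed asymptotic with $c = \sum_\pi c(\pi) > 0$.

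The only obstacle I anticipate is confirming that the bookkeeping of Lemmas~\ref{lem:MBfactors} and~\ref{lem:permRep} carries over cleanly for this ordering: specifically, that the Artin $L$-function cut out by the minimum-weight orbits (here the $G_k$-action on $T - \{1\}$, rather than on the set of minimum-index elements) still factors through Dedekind zeta functions exactly as stated, so that the Tauberian argument producing the main term $X (\log X)^{b-1}$ goes through unchanged. Since Alberts--O'Dorney explicitly work at this level of generality, I expect no serious difficulty, but this is the verification that must be spelled out to complete the proof.
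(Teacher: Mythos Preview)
Your proposal is correct and follows essentially the same route as the paper's proof: fix an abelian $T \normal G$, rig the weight function so that $T$ carries weight $1$ and $G-T$ carries large weight, invoke Alberts--O'Dorney for the fiberwise asymptotic, transport the uniformity argument of Theorem~\ref{thm:uniformity} to this ordering, and verify convergence via Lemma~\ref{lem:H1_first_bound} plus a crude polynomial count of $G/T$-extensions. The only cosmetic difference is the choice of weight on $G-T$: the paper takes $\mathrm{wt}(g)=\ind_{|G|}(g)$ (the regular-representation index), which makes $q_*\inv \asymp (\disc_{G/T})^{|T|}$ for $G/T$ in its regular representation and lets the Ellenberg--Venkatesh bound $\#\mathcal{F}_{|G/T|,k}(G/T;X)\ll X$ close the argument without a free parameter, whereas you use a constant weight $W$ and then choose $W$ large enough to beat Schmidt's bound. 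Your worry about Lemmas~\ref{lem:MBfactors} and~\ref{lem:permRep} is unfounded: the relevant minimal-weight set is simply $T-\{1\}$ with the twisted cyclotomic action, which is still a finite $G_k$-set, so the $L$-function factors into Dedekind zeta functions exactly as before.
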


	In particular, this includes all solvable groups, and many more groups besides! This is a cute application of our methods. While it showcases the general framework to which our methods apply, the admissible invariant needed for this specific result is often very far from a discriminant ordering.

	\begin{proof}
		Let $T\normal G$ be a nontrivial abelian normal subgroup. Take the admissible invariant determined by the weight function
		\[
			{\rm wt}(g) = \begin{cases}
				1 & g\in T-\{1\}\\
				\ind_{|G|}(g) & g\not\in T,
			\end{cases}
		\]
		where $\ind_{|G|}$ is the index function for $G$ in the regular representation, that is
		\[
			\inv(\pi) = \prod_{\fp} \fp^{{\rm wt}(\pi(\tau_\fp)}
		\]
		for $\tau_\fp$ a generator of tame inertia.

		We follow along the proof of Theorem \ref{thm:main_abelian_on_top}. Alberts--O'Dorney work at the level of a general admissible invariant in \cite{alberts-odorney2021}, and it follows directly from their work that
		\[
			\#\{\psi\in q_*^{-1}(\pi) : |\inv(\psi)|\le X\} \sim c X (\log X)^{b(\pi)-1}
		\]
		for some positive constants $b(\pi),c > 0$, where $\pi\in q_*\Sur(G_k,G/T)$ and $\widetilde{\pi}\in q_*^{-1}(\pi)$. In particular, $b(\pi) \le |T|$ is necessarily bounded. This verifies Theorem \ref{thm:main_pointwise}(1), where we set $a=1$, $b=\max_\pi b(\pi)$, and $c(\pi)$ is the $c$ above if $b(\pi) = b$ and $c(\pi) = 0$ otherwise.

		Next, we prove a uniform upper bound for these fibers analogous to Theorem \ref{thm:uniformity}. The start of the proof is the same: we bound
		\[
			\#\{\psi\in q_*^{-1}(\pi) : |\inv(\psi)|\le X\} \ll_{|T|} |H^1_{ur}(k,T(\pi))| \sum_{j\le X} a_j
		\]
		by the same argument as the one proving Lemma \ref{lem:upperboundMBseries}, where $a_j$ are the Dirichlet coefficients of
		\[
			{\rm MB}_{k,\inv}(T,\pi;s) = \prod_{\fp} \left(\frac{1}{|T|}\sum_{\psi_\fp\in q_*^{-1}(\pi|_{G_{k_\fp}})} |\inv(\psi_\fp)|^{-s}\right).
		\]
		The analog of Lemma \ref{lem:MBfactors} shows that
		\[
			{\rm MB}_{k,\inv}(T,\pi;s) = Q_{\inv}(T,\pi,s)L(s,\rho_{T}) G_{\inv}(T,\pi;s),
		\]
		where $Q_{\inv}(T,\pi;s)$ and $G_\inv(T,\pi;s)$ obey the same bounds as the ones listed in Lemma \ref{lem:MBfactors} with $\disc_G$ replaced by $\inv$, $\ind$ with ${\rm wt}$, and $a(T)$ replaced by $1$ everywhere they appear.

		Finally, we use the same smoothed Perron formula and shifted contour argument to bound
		\begin{align*}
			\sum_{j\le X} a_j(0) &\le \sum_{j=1}^{\infty} a_j(0) e^{1-\frac{j}{X}}\\
			&=\frac{e}{2\pi i} \int_{1+\epsilon-i\infty}^{1+\epsilon - i\infty} \hat{w}(0) \Gamma(s) X^s ds\\
			&=\underset{s=1}{\rm Res}(\hat{w}(0)\Gamma(s)X^s) + O_{n,[k:\Q],\epsilon}\left(|q_*\inv(\pi)|^{-1+\epsilon}X^{1-\epsilon}\right)\\
			&=O_{n,[k:\Q],\epsilon}\left(|q_*\inv(\pi)|^{-1+\epsilon}X^{1-\epsilon}\right).
		\end{align*}

		Thus, we have shown that
		\[
			\{\psi\in q_*^{-1}(\pi) : |\inv(\psi)|\le X\} \ll_{|T|,[k:\Q],\epsilon} \frac{|H^1_{ur}(k,T(\pi))|}{|q_*\inv(\pi)|^{1-\epsilon}}X(\log X)^{b(\pi)-1}.
		\]
		This gives Theorem \ref{thm:main_pointwise}(2).

		We remark that $q_*\inv \asymp (\disc_{G/T})^{|T|}$ with $G/T$ being viewed in the regular representation. Theorem \ref{thm:main_pointwise}(3) now follows, as the uniform coefficients satisfy
		\begin{align*}
			\sum_{\pi\in q_*\Sur_{\inv}(G_k,G;X)} \frac{|H_{ur}^1(k,T(\pi))|}{|q_*\inv(\pi)|^{1-\epsilon}} &\ll \sum_{F\in \mathcal{F}_{|G/T|,k}(G/T;X^{\frac{1}{|T|}})} |\disc(F/\Q)|^{d(\hat{T})/2 - |T| + \epsilon}
		\end{align*}
		following from the upper bound proven in Lemma \ref{lem:H1_first_bound}. By Abel summation, this is further bounded by
		\begin{align*}
			\sum_{\pi\in q_*\Sur_{\inv}(G_k,G;X)} \frac{|H_{ur}^1(k,T(\pi))|}{|q_*\inv(\pi)|^{1-\epsilon}} \ll& X^{\frac{d(\hat{T})}{2|T|} - 1 + \epsilon}\#\mathcal{F}_{|G/T|,k}(G/T;X^{\frac{1}{|T|}})\\
			&+ \left(|T| - \frac{d(\hat{T})}{2}\right)\int_1^{X^{1/|T|}}\! t^{d(\hat{T})/2-|T|-1+\epsilon} \#\mathcal{F}_{|G/T|,k}(G/T,t)dt\\
			\ll& X^{\frac{d(\hat{T})}{2|T|} + \frac{1}{|T|} - 1 + \epsilon} + O(1),
		\end{align*}
		where $d(\hat{T})$ is the number of generators of $\hat{T}$ and the last line uses the bounds for Galois extensions in \cite[Proposition 1.3]{ellenberg_venkatesh2006}, which imply that $\#\mathcal{F}_{|G/T|,k}(G/T;X) \ll X$.
		
		The fact that $d(\hat{T}) \le |T|-1$ implies
		\[
			\frac{d(T)}{2|T|} + \frac{1}{|T|} - 1 + \epsilon < -\frac{1}{2}+\epsilon < 0.
		\]
		Thus, we have shown that the sum
		\[
			\sum_{\pi\in q_*\Sur_{\inv}(G_k,G)} \frac{|H_{ur}^1(k,T(\pi))|}{|q_*\inv(\pi)|^{1-\epsilon}} = O(1)
		\]
		is convergent, so the result follows from the conclusion to Theorem \ref{thm:main_pointwise}.
	\end{proof}

\bibliographystyle{alpha}
\bibliography{references,MyLibrary}

\end{document}